  \numberwithin{equation}{section}
  \newtheorem{theorem}{Theorem}[section]  
  \newtheorem{theorem?}{``Theorem''}[section]  
  \newtheorem{proposition}[theorem]{Proposition}
  \newtheorem{lemma}[theorem]{Lemma}
\theoremstyle{definition}
\theoremstyle{remark}
  \newtheorem{remark}[theorem]{Remark}  
\newcommand{\R}{{\mathbb R}}
\newcommand{\C}{{\mathbb C}}
\newcommand{\Q}{{\mathbb Q}}
\newcommand{\N}{{\mathbb N}}
\newcommand{\Z}{{\mathbb Z}}
\renewcommand{\a}{\alpha}
\renewcommand{\d}{\partial}
\newcommand{\card}{{\Pisymbol{psy}{"23}}}
\begin{document}
\title[Oscillatory integrals]
{
Toric resolution of singularities in 
a certain class of $C^{\infty}$ functions 
and 
asymptotic analysis 
\\
of oscillatory integrals
} 
\author{Joe Kamimoto and Toshihiro Nose}
\address{Faculty of Mathematics, Kyushu University, 
Motooka 744, Nishi-ku, Fukuoka, 819-0395, Japan} 
\email{
joe@math.kyushu-u.ac.jp}
\email{ 
t-nose@math.kyushu-u.ac.jp}
\keywords{oscillatory integrals, 
oscillation index and its multiplicity, 
local zeta function, toric resolution, 
the classes
$\hat{\mathcal E}[P](U)$ and $\hat{\mathcal E}(U)$,  
asymptotic expansion, 
Newton polyhedra.}
\subjclass[2000]{58K55 (14M25, 42B20).}
\thanks{
The first author was supported by 
Grant-in-Aid for Scientific Research (C) (No. 22540199), 
Japan Society for the Promotion of Science. 
}
\maketitle



\begin{abstract}
In a seminal work of A. N. Varchenko, 
the behavior at infinity 
of oscillatory integrals with real analytic phase 
is precisely investigated by 
using the theory of toric varieties 
based on the geometry of the Newton polyhedron
of the phase. 
The purpose of this paper is to generalize
his results to the case that
the phase is contained in 
a certain class of $C^{\infty}$ functions. 
The key in our analysis is 
a toric resolution of singularities in 
the above class of $C^{\infty}$ functions.
The properties of poles of local zeta functions, 
which are closely related
to the behavior of oscillatory integrals, 
are also studied under the associated situation. 
\end{abstract}


\tableofcontents


\section{Introduction}
In this paper, we investigate the asymptotic behavior of 
oscillatory integrals, that is, integrals of the form
\begin{equation}
I(t;\varphi)=
\int_{\R^n}e^{it f(x)}\varphi(x)dx,
\label{eqn:1.1}
\end{equation}
for large values of the real parameter $t$, 
where 
$f$ is a real-valued $C^{\infty}$ smooth function 
defined on $\R^n$ and     
$\varphi$ is a complex-valued $C^{\infty}$ smooth function 
whose support is contained  
in a small neighborhood of the origin in $\R^n$.
Here $f$ and $\varphi$ are 
called the {\it phase} and the {\it amplitude}, 
respectively. 

By the principle of stationary phase, 
the main contribution in the
behavior of the integral (\ref{eqn:1.1}) 
as $t\to+\infty$ is given by the local properties 
of the phase on neighborhoods of its critical points.
When the phase has a nondegenerate critical point, 
i.e., the $n\times n$ matrix $\nabla^2 f$ 
is invertible, 
the Morse lemma implies that
there exists a coordinate where 
$f$ is locally expressed as 
$x_1^2+\cdots+x_k^2-x_{k+1}^2-\cdots-x_n^2$ 
with some $k$, 
This fact easily gives the asymptotic expansion of $I(t;\varphi)$
through the computation of Fresnel integrals.
On the other hand,  
the situation at degenerate critical points is quite different.
There are very few cases that direct computations are available
for the analysis of $I(t;\varphi)$ by 
using a smooth change of coordinates only. 
Up to now, 
there have been many studies about the degenerate case, 
which develop more intrinsic and ingenious methods
to see the behavior of $I(t;\varphi)$ 
(see \cite{var76},\cite{sch91},\cite{ste93},
\cite{pss99},\cite{gre04},\cite{py04},
\cite{dns05},\cite{gre08},\cite{gre09},\cite{gre10ma},
\cite{gre10jam},\cite{ikm10},\cite{im11tams}
\cite{im11jfaa},\cite{cgp12},
\cite{ckn12},\cite{nos12}, etc.).  
Analogous studies about oscillatory integral operators 
are seen in 
\cite{ste93},\cite{ps97},\cite{ryc01},\cite{gs02},\cite{gre05},\cite{gpt07}, 
etc. 

The following classical results need the hypothesis 
of the real analyticity of the phase.
By using Hironaka's resolution of singularities, 
it is known (c.f. \cite{jea70},\cite{mal74}) that 
$I(t;\varphi)$ admits an asymptotic expansion 
(see (\ref{eqn:3.1}) in Section~3). 
More precisely, Varchenko \cite{var76} investigates 
the leading term of this asymptotic expansion
by using the theory of toric varieties 
based on the geometry of 
the Newton polyhedron of the phase  
under a certain nondegeneracy condition 
on the phase
(see Theorem~3.1 in Section~3).  
Since his study, 
the investigation of the behavior of 
oscillatory integrals has been more closely linked with 
the theory of singularities. 
Refer to the excellent exposition 
\cite{agv88} for studies in this direction. 
The investigation under the nondegeneracy
hypothesis has been 
developed in 
\cite{ds89},\cite{ds92},\cite{dls97},\cite{dns05}.

In the same paper \cite{var76}, 
Varchenko investigates the  
two-dimensional case in more detail. 
When the phase is real analytic, 
he proves the existence of a good coordinate
system, 
which is called an {\it adapted coordinate},
and gives analogous results 
without the nondegeneracy hypothesis
by using this coordinate. 
His proof is based on a two-dimensional resolution
of singularities result. 
Notice that the adapted coordinate
may not exist in dimensions higher than two. 
Later, his two-dimensional results have been improved in 
\cite{pss99},\cite{gre04},\cite{py04},\cite{gre09},\cite{ikm10},\cite{im11tams},\cite{im11jfaa},
which are inspired by the work of
Phong and Stein in their seminal paper \cite{ps97}. 

In higher dimensions, 
recent interesting studies 
\cite{gre08},\cite{gre10ma},\cite{gre10jam},\cite{cgp12} 
also emphasis
the importance of the relationship
between  
behavior of 
oscillatory integrals 
and  
resolution of singularities 
for the phase. 
Observing these studies and the two-dimensional 
works mentioned above, we see that 
explicit and elementary approaches 
to the resolution of singularities 
are useful for quantitative investigation of
the decay rate of oscillatory integrals.

In this paper, 
we generalize the above results of 
Varchenko \cite{var76},
under the same nondegeneracy hypothesis,  
to the case that
the phase is contained in a certain class of 
$C^{\infty}$ functions 
including real analytic functions.
This class is denoted by 
$\hat{\mathcal E}(U)$, 
where 
$U$ is an open neighborhood of the origin in $\R^n$. 
Under the nondegeneracy condition,  
we construct a toric resolution of singularities
in the class $\hat{\mathcal E}(U)$.  
Using this resolution, 
we show that 
$I(t;\varphi)$ has 
an asymptotic expansion of the same form 
as in the real analytic phase case 
and succeed to
generalize the above results of Varchenko.
Moreover, we give an explicit formula of the coefficient
of the leading term of the above asymptotic expansion. 

Let us explain the properties of 
the class $\hat{\mathcal E}(U)$ 
in more detail. 
In the above earlier many investigations, 
the function $\gamma$-{\it part}, 
which corresponds to each face $\gamma$ 
of the Newton polyhedron of the phase, 
plays an important role.
By using summation,
the $\gamma$-part is simply defined as a function for 
every face $\gamma$ in the real analytic case.
From the viewpoint of this definition,    
the $\gamma$-part is considered as a formal power series 
when $\gamma$ is noncompact and the phase is only smooth.   
This $\gamma$-part may not become a function, so
it is not useful for our analysis. 
From convex geometrical points of view 
(c.f. \cite{zie95}), we give another definition
of the $\gamma$-part, which always becomes 
a function defined near the origin
(see Section~2.3).
This definition is a natural generalization
of that in the real analytic case.
We remark that not all smooth functions admit 
the $\gamma$-part for every face $\gamma$ of
their Newton polyhedra in our sense. 
The class $\hat{\mathcal E}(U)$ is defined to be the set of 
$C^{\infty}$ functions admitting the $\gamma$-part
for every face $\gamma$ of its Newton polyhedron  
(see Section~2.4).
Many kinds of $C^{\infty}$ functions are contained 
in this class. 
In particular, it contains 
the Denjoy-Carleman quasianalytic classes,
which are
interesting classes of $C^{\infty}$ functions and
have been studied 
from various points of view
(c.f. \cite{bm04},\cite{thi08}). 
The most important property of the class 
$\hat{\mathcal E}(U)$ is that 
its element is    
generated by finite monomials whose powers
are contained in its Newton polyhedron.
This property plays a crucial role in the 
construction of a toric resolution of singularities
in the class 
$\hat{\mathcal E}(U)$.

There have been many attempts to understand 
the behavior of oscillatory integrals 
with {\it smooth} phases.  
Explicit asymptotic expansions of $I(t;\varphi)$ 
are computed 
in the case of one-dimensional nonflat phases  
(see the monograph \cite{ste93}) 
and 
in the case of finite line type convex phases
(see \cite{sch91}). 
In the two-dimensional case, 
strong results are also obtained by using 
an adapted coordinate, 
which exists even in the smooth case,
in \cite{gre09},\cite{ikm10},\cite{im11tams},\cite{im11jfaa}.   
(As for analogous studies about oscillatory integral operators, 
the one-dimensional case  
has been completely understood 
when the phase is nonflat in \cite{ryc01},\cite{gre05}.)  
On the other hand, 
a simple example given by Iosevich and Sawyer \cite{is97} 
in two dimensions
shows that  
some kind of restriction 
like the class $\hat{\mathcal E}(U)$ 
is necessary to generalize the results of Varchenko 
directly (see Section~11.4). 
The smooth case is difficult to deal with 
because analytical information 
of functions around critical points does not always appear in 
the geometry of their Newton polyhedra.   


It is known  
(see, for instance, 
\cite{igu78}, \cite{agv88} and Section~10.1 in this paper) 
that 
the asymptotic analysis of the oscillatory integral (\ref{eqn:1.1}) 
can be reduced to an investigation of the poles of the functions 
$Z_{\pm}(s;\varphi)$ (see (\ref{eqn:5.1}) in Section~9), 
which are similar to the local zeta function 
\begin{equation*}
Z(s;\varphi)=
\int_{\R^n} |f(x)|^s\varphi(x)dx,
\end{equation*}
where $f$, $\varphi$ are  
as in (\ref{eqn:1.1}) and $f$ vanishes at a critical point. 
The substantial analysis in this paper is to  
investigate the properties of poles of 
the local zeta function $Z(s;\varphi)$ and 
the above functions $Z_{\pm}(s;\varphi)$ by using 
the geometrical properties of the Newton polyhedron 
of the function $f$.
We also give new results relating to  
the poles of these functions. 

This paper is organized as follows. 
In Section~2, 
after explaining  
some important notions in convex geometry, 
we give the definition of  
Newton polyhedra and explain related important words
in our analysis. 
Moreover, after generalizing the concept of the $\gamma$-part,
we introduce the classes 
$\hat{\mathcal E}[P](U)$ and 
$\hat{\mathcal E}(U)$ of $C^{\infty}$ functions. 
In Section~3, 
we state main results relating to oscillatory integrals. 
Some parts of the results are new even when the phase is 
real analytic. 
In Section~4, 
we consider elementary convex geometrical properties of 
polyhedra, which are useful in this paper. 
In Section~5, 
basic properties of generalized $\gamma$-part are 
investigated. 
In Section~6, 
more detailed properties of the class 
$\hat{\mathcal E}(U)$ are investigated, 
which play important roles in the resolution
of singularities. 
In Section~7,
we overview the method to construct toric varieties
from a given polyhedron.
In Section~8,   
we construct a resolution of singularities 
in the class $\hat{\mathcal E}(U)$
under the nondegeneracy condition in \cite{var76}.  
In Section~9,
we investigate the properties of 
poles of the local zeta function 
$Z(s;\varphi)$ and 
the functions $Z_{\pm}(s;\varphi)$ by using 
the resolution of singularities constructed 
in Section~8.
In Section~10, 
we give proofs of theorems on the behavior
of oscillatory integrals stated 
in Section~3. 
Furthermore, we give explicit formulae for 
the leading term of the asymptotic expansion 
of $I(t;\varphi)$.
In Section~11, 
we give concrete computations for some examples, 
which are not directly covered in earlier investigations. 

{\it Notation and symbols.}\quad
\begin{enumerate}
\item 
We denote by $\Z_+, \Q_+, \R_+$ the subsets consisting of 
all nonnegative numbers in $\Z,\Q,\R$, respectively.
For $s\in\C$, $\Re(s)$ expresses the real part of $s$. 
\item
We use the multi-index as follows.
For $x=(x_1,\ldots,x_n),y=(y_1,\ldots,y_n) \in\R^n$, 
$\a=(\a_1,\ldots,\a_n)\in\Z_+^n$, 
define
\begin{eqnarray*}
&& 
|x|=\sqrt{|x_1|^{2}+\cdots +|x_n|^{2}}, \quad 
\langle x,y\rangle =x_1 y_1+\dots+x_n y_n, 
\\
&& 
x^{\a}=x_1^{\a_1}\cdots x_n^{\a_n}, \quad
\partial^{\a}=
\left(\frac{\partial}{\partial x_1}\right)^{\a_1}\cdots
\left(\frac{\partial}{\partial x_n}\right)^{\a_n},  
\\
&&
\langle \a\rangle =\a_1+\cdots+\a_n, \quad
\a!=\a_1!\cdots \a_n! \quad (0!=1!=1). 
\end{eqnarray*}
\item
For $A,B\subset \R^n$ and $c\in\R$, 
we set 
$$
A+B=\{a+b\in\R^n; a\in A \mbox{ and } b\in B\},\quad
c\cdot A=\{ca\in\R^n; a\in A\}.
$$
\item
For a finite set $A$, 
$\card A$ means the cardinality of $A$. 
\item
For a nonnegative real number $r$ and 
a subset $I$ in $\{1,\ldots,n\}$, the map 
$T_I^{r}:\R^n\to\R^n$ is defined by 
\begin{equation}
(z_1,\ldots,z_n)=T_I^{r}(x_1,\ldots,x_n)\,\, 
\mbox{ with }\,\, z_j:=\begin{cases}
r& 
\quad \mbox{for $j\in I$}, \\
x_j&
\quad \mbox{otherwise}.
\end{cases}
\label{eqn:1.2}
\end{equation}
We define $T_I:=T_I^0$.
For a set $A$ in $\R^n$, 
the image of $A$ by $T_I$ is denoted by $T_I(A)$.  
When $A=\R^n$ or $\Z_+^n$, 
its image is expressed as 
\begin{equation}
T_I(A)=\{x\in A; x_j=0 \mbox{ for $j\in I$}\}.
\label{eqn:1.3}
\end{equation}
\item
For a $C^{\infty}$ function $f$, 
we denote by Supp($f$) the support of $f$, i.e., 
Supp($f$)$=\overline{\{x\in \R^n; f(x)\neq 0\}}$. 
\item For $x\in\R$, $\alpha>0$, 
the value of $e^{-1/|x|^{\alpha}}$ at the origin
is defined by $0$. 
Then $e^{-1/|x|^{\alpha}}$ is a $C^{\infty}$ function
defined on $\R$.
\end{enumerate}


\section{Newton polyhedra and 
the classes $\hat{\mathcal E}[P](U)$ and 
$\hat{\mathcal E}(U)$}

\subsection{Polyhedra}
Let us explain fundamental notions 
in the theory of convex polyhedra, 
which are necessary for our study.
Refer to \cite{zie95}   
for general theory of convex polyhedra.  

For $(a,l)\in \R^n\times\R$, 
let $H(a,l)$ and $H^+(a,l)$ be  
a hyperplane and 
a closed half space in $\R^n$ 
defined by
\begin{equation}
\begin{split}
&H(a,l):=\{x\in\R^n;\langle a,x\rangle =l\},\\
&H^+(a,l):=\{x\in\R^n;\langle a,x\rangle \geq l\},
\end{split}
\label{eqn:2.1}
\end{equation} 
respectively. 
A ({\it convex rational}) {\it polyhedron} is  
an intersection of closed halfspaces:
a set $P\subset\R^n$ presented in the form
$
P=\bigcap_{j=1}^N H^+(a^j,l_j)
$
for some $a^1,\ldots,a^N\in\Z^n$ and 
$l_1,\ldots,l_N \in\Z$.

Let $P$ be a polyhedron in $\R^n$. 
A pair $(a,l)\in \Z^n\times\Z$ is said to be 
{\it valid} for $P$ 
if $P$ is contained in $H^+(a,l)$.
A {\it face} of $P$ is any set of the form 
$
F=P\cap H(a,l),
$
where $(a,l)$ is valid for $P$. 
Since $(0,0)$ is always valid, 
we consider $P$ itself as a trivial face of $P$;
the other faces are called {\it proper faces}.  
Conversely, 
it is easy to see that any face is a polyhedron. 
Considering the valid pair $(0,-1)$, 
we see that the empty set is always a face of $P$. 
Indeed, $H^+(0,-1)=\R^n$, but $H(0,-1)=\emptyset$.
The {\it dimension} of a face $F$ is the dimension of 
its affine hull of $F$ 
(i.e., the intersection of all affine flats that 
contain $F$), which is denoted by $\dim(F)$. 
The faces of dimensions $0,1$ and $\dim(P)-1$
are called {\it vertices}, {\it edges} and 
{\it facets}, respectively. 
The {\it boundary} of a polyhedron $P$, denoted by 
$\d P$,  
is the union of all proper faces of $P$.  
For a face $F$, $\d F$ is similarly defined. 

\subsection{Newton polyhedra}

Let $f$ 
be a real-valued $C^{\infty}$ function defined 
on an open neighborhood of the origin in $\R^n$. 
Denote by $\hat{f}(x)$
the Taylor series of $f$ at the origin, i.e.,   
$$
\hat{f}(x)=\sum_{\alpha\in{\Z}_+^n} c_{\alpha}x^{\alpha} 
\quad \mbox{ with $c_{\alpha}=
\dfrac{\partial^{\alpha} f(0)}{\alpha!}$}.
$$ 
The {\it Newton polyhedron} of $f$
is the integral polyhedron: 
$$
\Gamma_+(f)=
\mbox{the convex hull of the set 
$\bigcup \{\a+\R_+^n;c_{\a}\neq 0\}$ in $\R_+^n$}
$$
(i.e., the intersection 
of all convex sets 
which contain $\bigcup \{\a+\R_+^n;\a\in S_f\}$). 
It is known (cf. \cite{zie95}) that the Newton polyhedron 
$\Gamma_+(f)$ is a polyhedron. 
The union of the compact faces of 
the Newton polyhedron $\Gamma_+(f)$ is called 
the {\it Newton diagram} $\Gamma(f)$ of $f$, 
while the boundary of $\Gamma_+(f)$ 
is denoted by 
$\d\Gamma_+(f)$. 
The {\it principal part} of $f$ is defined by 
$f_*(x)=\sum_{\alpha\in\Gamma(f)\cap\Z_+^n}
c_{\alpha}x^{\alpha}.$
Note that $\Gamma_+(f)=\Gamma_+(f_*)$.

A $C^{\infty}$ function $f$ is said to be {\it convenient} if 
the Newton polyhedron $\Gamma_+(f)$ 
intersects all the coordinate axes. 

%
We assume that $f$ is {\it nonflat}, i.e., 
$\Gamma_+(f)\neq \emptyset$.  
Let $q_*$ be the point at which 
the line $\a_1=\cdots=\a_n$ in $\R^n$ 
intersects the boundary of $\Gamma_+(f)$. 
The coordinate of $q_*$ is called 
the {\it Newton distance} of $\Gamma_+(f)$, 
which is denoted by $d(f)$, i.e., 
$q_*=(d(f),\ldots,d(f))$. 
The face whose relative interior contains $q_*$ is 
called the {\it principal face} of
$\Gamma_+(f)$, which is denoted by $\tau_*$. 
The codimension of $\tau_*$ is called the 
{\it Newton multiplicity} of $\Gamma_+(f)$, 
which is denoted by $m(f)$.   
Here, when $q_*$ is a vertex of $\Gamma_+(f)$, 
$\tau_*$ is the point $q_*$ and $m(f)=n$.
\subsection{
The $\gamma$-part
}

Let $f$ be a real-valued $C^{\infty}$ function on 
an open neighborhood $V$ of the origin in $\R^n$,
$P\subset \R_+^n$ a nonempty polyhedron 
containing $\Gamma_+(f)$
and  
$\gamma$ a face of $P$. 
Note that this polyhedron $P$ satisfies the condition
$P+\R_+^n\subset P$, 
which will be shown in Lemma~4.1, below.  
We say that  
$f$ {\it admits the $\gamma$-part} 
on an open neighborhood $U\subset V$ of the origin 
if for any $x\in U$ the limit:
\begin{equation}
\lim_{t\to 0}\frac{f(t^{a_1}x_1,\ldots,t^{a_n}x_n)}{t^l}
\label{eqn:2.2}
\end{equation}
exists for {\it all} valid pairs 
$(a,l)=(a_1,\ldots,a_n,l)\in\Z_+^n\times\Z_+$
defining $\gamma$
(i.e., $H(a,l)\cap P=\gamma$). 
Proposition~5.2 (iii), below, implies that when 
$f$ admits the $\gamma$-part, 
the above limits take the same value for 
any $(a,l)$, which is denoted by $f_{\gamma}(x)$. 
We consider $f_{\gamma}$ as 
the function on $U$, which  
is called the $\gamma$-part of $f$ on $U$.  
\begin{remark}
We give many remarks on the $\gamma$-part. 
Some of them are not trivial and
they will be shown later. 
\begin{enumerate}
\item 
The readers might feel that
``all'' is too strict
in the above definition of the admission of the 
$\gamma$-part. 
Actually, even if ``all'' is replaced by ``some'' 
in the definition, 
this exchange does not affect the analysis in this paper. 
This subtle issue will be discussed in 
Section~6.4.
\item 
If $f$ admits the $\gamma$-part $f_{\gamma}$
on $U$, then
$f_{\gamma}$ has the quasihomogeneous property:
\begin{equation}
f_{\gamma}(t^{a_1} x_1,\ldots, t^{a_n} x_n)=t^l f_{\gamma}(x) 
\mbox{\,\, for \,\, 
$t\in(0,1]$ and $x\in U$},
\label{eqn:2.3}
\end{equation}
where $(a,l)$ is a valid pair defining $\gamma$ 
(see Lemma~5.4 (i)).
\item 
The above $\gamma$-part $f_{\gamma}$  
is a $C^{\infty}$ function defined on $U$ 
(see Proposition~5.2 (iv)). 
Moreover, the above quasihomogeneity (\ref{eqn:2.3})
implies that 
$f_{\gamma}$ can be uniquely extended to 
a $C^{\infty}$ function with the property 
(\ref{eqn:2.3}) defined on 
much wider regions (see Lemma~5.4 (ii)). 
This function is also denoted by $f_{\gamma}$. 
\item 
When $\gamma=P$, 
$f$ always admits the 
$\gamma$-part on $V$ and $f_P=f$. 
In fact, consider the case when $(a,l)=(0,0)$. 
\item
   When $\gamma$ is contained in 
   some coordinate plane,
   $f$ always admits the $\gamma$-part on $V$.
   Indeed, for any pair $(a,l)$ defining $\gamma$,
   we have $l=0$ and
   the limit (\ref{eqn:2.2}) always exists.
\item
For a compact face $\gamma$ of $\Gamma_+(f)$, 
$f$ always admits the $\gamma$-part near the origin
and $f_{\gamma}(x)$ equals the polynomial 
$\sum_{\a\in\gamma\cap\Z_+^n}
c_{\alpha}x^{\alpha}$, which coincides with the 
definition of  
the $\gamma$-part of $f$ in 
\cite{var76},\cite{agv88} 
(see Proposition~5.2 (iii)). 
\item
Let $\gamma$ be a noncompact face of $\Gamma_+(f)$.
If $f$ admits the $\gamma$-part $f_{\gamma}$ on $U$, 
then the Taylor series at the origin of $f_{\gamma}$ is 
$\sum_{\a\in\gamma\cap\Z_+^n}
c_{\alpha}x^{\alpha}$
(see Lemma~5.3).
\item 
If $f$ is real analytic on $V$ and 
$\gamma$ is a face of $\Gamma_+(f)$, then 
$f$ admits the $\gamma$-part $f_{\gamma}$ 
on $U$ and, moreover,  
$f_{\gamma}$ is equal to a convergent power series
$\sum_{\a\in\gamma\cap\Z_+^n}
c_{\alpha}x^{\alpha}$ on $U$ 
(see Lemma~5.3).
\item
An example, 
which shows the case of non-admission of the $\gamma$-part, 
will be given in Section~2.5.
This example also indicates that 
for a face $\gamma$ of $P$, 
the condition
$\gamma\cap\Gamma_+(f)=\emptyset$ does not always
mean that $f$ admits the $\gamma$-part:
$f_{\gamma}\equiv 0$. 
\end{enumerate}
\end{remark}

\subsection{
The classes 
$\hat{\mathcal E}[P](U)$ and 
$\hat{\mathcal E}(U)$}

Let 
$P\subset \R_+^n$ be a polyhedron (possibly an empty set)
satisfying $P+\R_+^n\subset P$
if $P\neq\emptyset$
and 
$U$ an open neighborhood of the origin. 
Denote by 
${\mathcal E}[P](U)$
the set of $C^{\infty}$ functions defined on $U$ 
whose Newton polyhedra are contained in $P$.
Moreover, when $P\neq \emptyset$, we denote by
$
\hat{{\mathcal E}}[P](U)
$
the set of the elements $f$
of ${\mathcal E}[P](U)$
admitting the $\gamma$-part on $U$ 
for any face $\gamma$ of $P$. 
We set $\hat{{\mathcal E}}[\emptyset](U)=\{0\}$,
i.e., 
the set consisting of only the function identically 
equaling zero on $U$. 
We define
$$
\hat{\mathcal E}(U)=
\{f\in C^{\infty}(U);f\in\hat{\mathcal E}[\Gamma_+(f)](U)\}.
$$ 
\begin{remark}
In the definition of $\hat{\mathcal E}[P](U)$, 
``any face'' 
can be replaced by ``any noncompact facet'' 
(see Section~6.4).
\end{remark}

\begin{remark}
The class $\hat{\mathcal E}(U)$ contains many kinds of 
$C^{\infty}$ functions. 
Here $U$ is a small open neighborhood of the origin in $\R^n$. 
\begin{enumerate}
\item
The function identically equaling zero
on $U$ is contained in $\hat{{\mathcal E}}(U)$. 
This easily follows from the definition.
\item
Every real analytic function defined on $U$
belongs to $\hat{\mathcal E}(U)$. 
This follows from Remark~2.1 (viii).
\item 
Every convenient $C^{\infty}$ function 
defined on $U$
belongs to $\hat{\mathcal E}(U)$.
This follows from Remark~2.1 (v), (vi).
\item
In the one-dimensional case, 
every nonflat $C^{\infty}$ function defined on $U$
belongs to $\hat{\mathcal E}(U)$. 
This is a particular case of the above (iii).
\item
The Denjoy-Carleman classes ${\mathcal E}_M(U)$
are contained in $\hat{\mathcal E}(U)$.
See Proposition~6.10. 
\end{enumerate}
\end{remark}
\begin{remark}
Tougeron \cite{tou68} shows that 
if a $C^{\infty}$ function $f$ has a critical point of 
``finite multiplicity'' (see \cite{agv85}, p121), 
then $f$ can be expressed as a
polynomial around the critical point 
by using smooth coordinate changes.
But, there are many elements in 
$\hat{\mathcal E}(U)$ or $\hat{\mathcal E}[P](U)$ 
do not satisfy this hypothesis. 
(Our classes contain all real analytic functions.)
\end{remark}
\begin{remark}
The classes $\hat{\mathcal E}[P](U)$ and 
$\hat{\mathcal E}(U)$ are useful for the investigation
of the behavior of
{\it weighted} oscillatory integrals:
$$
\tilde{I}(t;\varphi)
=\int_{\R^n} e^{it f(x)}g(x)\varphi(x)dx,
$$
where $f,\varphi$ are the same as in (\ref{eqn:1.1})
and $g$ is a {\it weight} function satisfying some conditions
(see \cite{knhir12},\cite{nos12}).
\end{remark}

More detailed properties of the classes 
$\hat{\mathcal E}[P](U)$ and 
$\hat{\mathcal E}(U)$ are investigated in 
Section~6 below.

\subsection{An example}
Let us consider the following two-dimensional example.
\begin{equation}
\begin{split}
&f_k(x)=f_k(x_1,x_2)=x_1^2x_2^2+x_1^ke^{-1/x_2^2},
\quad 
k\in\Z_+; \\
&P=
\{(\alpha_1,\alpha_2)\in\R_+^2;
\alpha_1\geq 1, \,\, \alpha_2\geq 1\}.
\end{split}
\end{equation}
Of course, $f_k$ is not real analytic around the origin. 
The set of the proper faces of $\Gamma_+(f_k)$ and $P$ consists of  
$\gamma_1,\gamma_2,\gamma_3$ and $\tau_1,\tau_2,\tau_3$, where
\begin{equation}
\begin{split}
&\gamma_1=\{(2,\a_2);\a_2\geq 2\},
\,\,
\gamma_2=\{(2,2)\},\,\,
\gamma_3=\{(\a_1,2);\a_1\geq 2\},\\
&
\tau_1=\{(1,\a_2);\a_2\geq 1\}, \,\,
\tau_2=\{(1,1)\},\,\,
\tau_3=\{(\a_1,1);\a_1\geq 1\}.
\end{split}
\end{equation}
It is easy to see that if $j=2,3$, 
then $f_k$ admits the $\gamma_j$-part
and $\tau_j$-part near the origin for all $k\in\Z_+$ and 
they are written as 
$
(f_k)_{\gamma_2}(x)=(f_k)_{\gamma_3}(x)=x_1^2x_2^2$
and 
$ 
(f_k)_{\tau_2}(x)=(f_k)_{\tau_3}(x)\equiv 0.
$
Consider the $\gamma_1$-part and $\tau_1$-part of $f_k$ for
$k\in\Z_+$.
The situation depends on the parameter $k$ as follows.
\begin{itemize}
\item $(f_0)_{\gamma_1}$ and $(f_0)_{\tau_1}$ cannot be defined.
\item $(f_1)_{\gamma_1}$ cannot be defined but 
$(f_1)_{\tau_1}(x)=x_1e^{-1/x_2^2}$.
\item $(f_2)_{\gamma_1}(x)=f(x)$ and $(f_2)_{\tau_1}(x)\equiv 0$.
\item If $k\geq 3$, then 
$f_{\gamma_1}(x)=x_1^2x_2^2$ and $f_{\tau_1}(x)\equiv 0$.
\end{itemize}
From the above, we see that
$f_k\in\hat{{\mathcal E}}(U)$ if and only if $k\geq 2$; 
$f_k\in\hat{{\mathcal E}}[P](U)$ if and only if $k\geq 1$.
Notice that  
$\tau_1\cap \Gamma_+(f_1)=\emptyset$ but
$(f_1)_{\tau_1}(x)=x_1e^{-1/x_2^2}\not\equiv 0$ 
(see Remark~2.1 (ix)).


\section{Main results}

Let us explain our results relating to the behavior 
of the oscillatory integral $I(t;\varphi)$ 
in (\ref{eqn:1.1}) as $t \to+\infty$. 

Throughout this section, 
the functions $f$, $\varphi$ satisfy 
the following conditions. 
Let $U$ be an open neighborhood of the origin in $\R^n$.
\begin{enumerate}
\item[(A)] 
$f:U \to \R$ is a $C^{\infty}$ smooth function satisfying 
that $f(0)=0$, $\nabla f(0)=(0,\ldots,0)$ and 
$\Gamma_+(f)\neq\emptyset$;
\item[(B)]
$\varphi:\R^n\to\C$ is a $C^{\infty}$ smooth function 
whose support is contained in $U$. 
\end{enumerate}

\subsection{Known results}

As mentioned in the Introduction,
by using Hironaka's resolution of singularities 
\cite{hir64}, an asymptotic expansion for 
$I(t;\varphi)$ 
is obtained (c.f. \cite{jea70},\cite{mal74}). 
To be more specific, 
if $f$ is real analytic on $U$
and
the support of $\varphi$ is contained 
in a sufficiently small neighborhood of 
the origin, then
the integral $I(t;\varphi)$ has an
asymptotic expansion of the form
\begin{equation}
I(t;\varphi)\sim
\sum_{\alpha}\sum_{k=1}^{n} 
C_{\alpha k}(\varphi)t^{\alpha} (\log t)^{k-1} \quad 
\mbox{as $t \to +\infty$}, 
\label{eqn:3.1}
\end{equation}
where 
$\alpha$ runs through a finite number of 
arithmetic progressions, not depending on the amplitude 
$\varphi$, which consist of negative rational numbers.    
We are interested in the largest $\alpha$ 
occurring in the asymptotic expansion (\ref{eqn:3.1}). 
Let $S(f)$ be the set 
of pairs $(\alpha,k)$ such that 
for each neighborhood of the origin in $\R^n$, 
there exists a $C^{\infty}$ function $\varphi$ 
with support in this neighborhood 
for which $C_{\alpha k}(\varphi)\neq 0$ in the asymptotic 
expansion (\ref{eqn:3.1}).  
We denote by $(\beta(f), \eta(f))$
the maximum of the set $S(f)$ 
under the lexicographic ordering, 
i.e., $\beta(f)$ is the maximum of
values $\alpha$ for which we can find 
$k$ so that $(\alpha,k)$ belongs to $S(f)$;
$\eta(f)$ is the maximum of integers
$k$ satisfying that $(\beta(f),k)$ belongs to 
$S(f)$. 
We call $\beta(f)$   
the {\it oscillation index}  
of $f$ and 
$\eta(f)$ the {\it multiplicity} of 
its index. 
(This multiplicity, less one, is
equal to the corresponding multiplicity in 
\cite{agv88}, p183.)

The oscillation index and its multiplicity are
precisely estimated or determined by  
Varchenko in \cite{var76} and Arnold, Gusein-Zade and Varchenko 
\cite{agv88}. 
Their investigations need the following condition.
A $C^{\infty}$ function $f$ 
is said to be {\it nondegenerate} 
over $\R$ with respect to the Newton polyhedron 
$\Gamma_+(f)$ if
for every compact face $\gamma$ of $\Gamma_+(f)$, 
the polynomial $f_{\gamma}$ satisfies 
\begin{equation}
\nabla f_{\gamma}=
\left(
\frac{\d f_{\gamma}}{\d x_1},\ldots,
\frac{\d f_{\gamma}}{\d x_n}\right)
\neq (0,\ldots,0) \quad
\mbox{on the set $U\cap(\R\setminus\{0\})^n$}.
\label{eqn:3.2}
\end{equation}

\begin{theorem}[\cite{var76},\cite{agv88}]
Suppose that 
$f$ is real analytic on $U$ and 
is nondegenerate
over $\R$ with respect to its Newton polyhedron. 
Then one has the following:
\begin{enumerate}
\item[(i)]
The progression $\{\a\}$ in $(\ref{eqn:3.1})$
belongs to finitely many 
arithmetic progressions, 
which are obtained by using the theory
of toric varieties based on the geometry of 
the Newton polyhedron $\Gamma_+(f)$. 
$($See Remark~3.4, below.$)$
\item[(ii)] $\beta(f)\leq -1/d(f)$.
\item[(iii)] If at least one of the following three conditions 
is satisfied:
\begin{enumerate}
\item $d(f)>1$;
\item $f$ is nonnegative or nonpositive on $U$;
\item $1/d(f)$ is not an odd integer and $f_{\tau_*}$ 
does not vanish on $U\cap(\R\setminus\{0\})^n$,
\end{enumerate}
then 
$\beta(f)=-1/d(f)$ and $\eta(f)=m(f)$.
\end{enumerate}
\end{theorem}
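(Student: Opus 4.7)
The plan is to reduce the problem to the analysis of the poles of the local zeta function $Z(s;\varphi)$ (and its variants $Z_{\pm}(s;\varphi)$) via the Mellin transform, and then to compute these poles explicitly using a toric resolution of singularities adapted to $\Gamma_+(f)$. The standard correspondence asserts that if $Z_{\pm}(s;\varphi)$ admits a meromorphic continuation to $\C$ with poles on a finite union of arithmetic progressions of negative rationals, then $I(t;\varphi)$ admits an asymptotic expansion of the form in (\ref{eqn:3.1}), where the exponents $\alpha$ are exactly (the negatives of) these poles, and the multiplicity $k$ is bounded by the order of the pole at $\alpha$. Thus proving (i), (ii), (iii) reduces to locating the rightmost pole of $Z_{\pm}(s;\varphi)$ and controlling its order.

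For the resolution step, I would take a smooth simplicial subdivision $\Sigma^*$ of the dual fan to $\Gamma_+(f)$ and build the associated toric modification $\pi:Y\to\R^n$. On each coordinate chart $U_\sigma$ indexed by an $n$-dimensional cone $\sigma$ with primitive generators $a^{(1)},\dots,a^{(n)}\in\Z_+^n$, the pullback has monomial form $f\circ\pi(y)=y^{m(\sigma)}\,\tilde f_\sigma(y)$ and Jacobian $y^{\rho(\sigma)-\mathbf 1}$, where $m(\sigma)_i=\ell(a^{(i)})$ with $\ell(a):=\min\{\langle a,\alpha\rangle:\alpha\in\Gamma_+(f)\}$, and $\rho(\sigma)_i=\langle a^{(i)},\mathbf 1\rangle$. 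The crucial input of nondegeneracy is used to show that, for each face $\gamma$ with corresponding cone, $\tilde f_\sigma$ is nonvanishing on the relevant coordinate subspaces, so that $\{\tilde f_\sigma=0\}$ meets the exceptional divisor normally and can be further resolved (or treated by a one-variable elementary argument along each factor).

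With this resolution in hand, the local zeta integral splits as a sum over charts of integrals of the form
\begin{equation*}
\int |y^{m(\sigma)}|^s\,|\tilde f_\sigma(y)|^s\,y^{\rho(\sigma)-\mathbf 1}\,(\varphi\circ\pi)(y)\,\chi_\sigma(y)\,dy,
\end{equation*}
and a routine Mellin analysis (integration along each $y_i$) shows meromorphic continuation with poles contained in the union, over primitive generators $a$ of one-dimensional cones in $\Sigma^*$, of the arithmetic progressions $\{-(\langle a,\mathbf 1\rangle+\nu)/\ell(a):\nu\in\Z_+\}$, proving (i). The largest candidate pole arises from the generator $a$ minimizing $\langle a,\mathbf 1\rangle/\ell(a)$; a standard duality argument on the support function of $\Gamma_+(f)$ identifies this minimum with $1/d(f)$, and the number of independent one-cones achieving it equals $m(f)=\mathrm{codim}\,\tau_*$. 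This yields (ii) and the upper bounds $\beta(f)\le -1/d(f)$, $\eta(f)\le m(f)$.

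The hard part, and the content of (iii), is to show the rightmost pole is \emph{not} cancelled when computing $Z_\pm(s;\varphi)$ or the Mellin transform of $\varphi\mapsto I(t;\varphi)$. I would construct a bump amplitude $\varphi\ge 0$ concentrated near a point of $\{x\in(\R\setminus\{0\})^n:f(x)=0\}\cap\{f_{\tau_*}\ne 0\}$ (or inside $\{f>0\}$ in case (b)) to show the leading coefficient is nonzero; the three alternative hypotheses in (iii) are precisely what is needed to rule out cancellation among the charts: condition (a) makes $-1/d(f)>-1$, so no oscillatory cancellation from $Z_+-Z_-$; condition (b) confines the sign; condition (c) combined with the $\tau_*$-part being nonvanishing on $(\R\setminus\{0\})^n$ guarantees a nondegenerate contribution along the principal face. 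This non-cancellation argument, together with the fact that the pole at $-1/d(f)$ has order exactly $m(f)$ by construction of the fan adapted to $\tau_*$, yields $\beta(f)=-1/d(f)$ and $\eta(f)=m(f)$.
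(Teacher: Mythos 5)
Your overall strategy is the same as the one the paper runs in Sections~7--10 (toric resolution adapted to $\Gamma_+(f)$, pullback of the local zeta functions $Z_{\pm}$ into monomial form chart by chart, meromorphic continuation by one-variable Mellin analysis, identification of the rightmost candidate pole $-1/d(f)$ and of the pole order with $m(f)$ via the combinatorics of the principal face, and transfer back to $I(t;\varphi)$ by the Mellin correspondence). Parts (i) and (ii) are handled correctly and correspond to Theorem~9.1, Step~4 and Lemma~9.3.

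The gap is in the non-cancellation argument for (iii). First, the device of ``a bump amplitude concentrated near a point of $\{x\in(\R\setminus\{0\})^n: f(x)=0\}\cap\{f_{\tau_*}\ne 0\}$'' is not admissible: by the very definition of $S(f)$, for each neighborhood of the origin you must produce a test function supported in \emph{that} neighborhood of the origin for which the leading coefficient is nonzero, so an amplitude localized at a nonzero point of the zero set cannot witness $(\alpha,k)\in S(f)$. The paper instead takes $\varphi$ with $\Re\varphi(0)>0$ and $\Re\varphi\ge 0$ supported near $0$, and computes the leading coefficient explicitly (Proposition~9.6, Theorem~9.10, Theorem~10.1). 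Second, you do not explain how the hypothesis that $1/d(f)$ is not an odd integer in condition (c) enters. This is exactly where the possible cancellation occurs: the oscillatory-integral coefficient is, up to a nonzero factor, $e^{i\pi/(2d(f))}C_+ + e^{-i\pi/(2d(f))}C_-$ (relation~(\ref{eqn:10.5}) with $\lambda=1/d(f)$), and the paper proves $\Re C_\pm\ge 0$ and $\Re(C_++C_-)>0$ under (a)/(b)/(c). If $1/d(f)=2m+1$ is an odd integer, the prefactors become $\pm i$ and the combination can vanish even when $C_+,C_->0$, which is why (c) excludes that case; your ``no oscillatory cancellation'' for (a) is correct since then $\pi/(2d(f))<\pi/2$, and (b) kills one of $C_\pm$, but (c) needs this sign computation, which your sketch omits. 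Finally, ``the number of independent one-cones achieving the minimum equals $m(f)$'' should be ``the maximum over maximal cones $\sigma$ of the number of edges of $\sigma$ realizing the minimum equals $m(f)$'' (Lemma~9.3); and when $1/d(f)$ is an integer one must control the extra contribution from the $-\N$ progressions to the pole order at $-1/d(f)$ (this is handled by Proposition~9.5 in the paper).
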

\begin{remark}
In the assertion~(iii),
more precise situation for amplitudes is seen as follows. 
If 
$\Re(\varphi(0))>0$ 
(resp. $\Re(\varphi(0))<0$) 
and 
$\Re(\varphi)$ is nonnegative 
(resp. nonpositive) on $U$ 
and the support of $\varphi$ is
contained in a sufficiently small neighborhood 
of the origin,  
then we have 
$\lim_{t\to\infty}t^{1/d(f)}(\log t)^{-m(f)+1} 
\cdot I(t;\varphi)\neq 0$. 
Here $\Re(\cdot)$ expresses the real part. 
\end{remark}

\subsection{Our results}

Let us explain our results. 
They need the following condition:
\begin{enumerate}
\item[(C)] 
$f$ belongs to the class 
$\hat{\mathcal E}(U)$ and 
is nondegenerate over $\R$ 
with respect to its Newton polyhedron. 
\end{enumerate}

Since Hironaka's resolution theorem
requires the hypothesis of the
real analyticity,
the existence of the asymptotic expansion
of $I(t;\varphi)$ is not trivial 
in the smooth phase case. 

\begin{theorem}
If $f$ satisfies the condition $(C)$
and the support of $\varphi$ is
contained in a sufficiently small neighborhood 
of the origin, 
then $I(t;\varphi)$ admits an asymptotic expansion 
of the form $(\ref{eqn:3.1})$, 
where $\{\alpha\}$ belongs to the same progressions 
as in the case that the phase is $f_*$, 
which is the principal part of $f$. 
$($Since $f_*$ is a polynomial, the progressions 
can be exactly constructed as in \cite{var76}.$)$
\end{theorem}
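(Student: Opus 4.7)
The plan is to deduce the asymptotic expansion from the meromorphic continuation of the auxiliary distributions $Z_{\pm}(s;\varphi)$ introduced in Section~9, following the classical Mellin transform / Igusa strategy which is sketched in Section~10.1. This reduction step is purely analytic and does not require real analyticity of the phase: once one knows the pole set of $Z_{\pm}(s;\varphi)$ together with the orders, a standard contour-shifting argument produces the asymptotic expansion \eqref{eqn:3.1} with the exponents $\alpha$ running through the real parts of those poles.

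The main input is then the toric resolution of singularities in $\hat{\mathcal E}(U)$ constructed in Section~8. Under the nondegeneracy hypothesis, that resolution provides a proper map $\pi\colon Y\to U$, built from the dual fan of $\Gamma_+(f)$, such that on each coordinate chart of $Y$ the pull-back $f\circ\pi$ factors as $y^{A}\cdot u(y)$ for some monomial $y^{A}$ and some smooth, nowhere vanishing unit $u$; moreover $\pi$ depends only on the Newton polyhedron. I would then change variables in $Z_{\pm}(s;\varphi)$ by $\pi$, localize with a partition of unity subordinated to the charts, and on each chart reduce the integral to a finite sum of elementary monomial integrals of the form
\begin{equation*}
\int_{\R_+^n} y^{sA+B}\,\psi(y)\,dy,
\end{equation*}
where $\psi$ is smooth and compactly supported. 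Meromorphic continuation of each such piece yields candidate poles at points of the form $-(\langle a^j,B\rangle+\nu_j)/\langle a^j,A\rangle$ with $\nu_j\in\Z_+$, where the $a^j$ are the primitive inner normals to the facets of $\Gamma_+(f)$. Because the combinatorics of the fan depends only on $\Gamma_+(f)=\Gamma_+(f_*)$, these candidate poles—and therefore the arithmetic progressions carrying the exponents $\alpha$—are exactly those arising when the phase is replaced by the polynomial $f_*$.

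The last step is to assemble the contributions from all charts, check that the total order of each pole is at most $n$ (so that the logarithmic factor in \eqref{eqn:3.1} has exponent at most $n-1$), and invoke the Mellin inversion described in Section~10.1 to translate poles of $Z_{\pm}$ into terms of the asymptotic expansion of $I(t;\varphi)$.

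The hard part is establishing that the resolved phase genuinely has the monomial-times-unit form on each chart of $Y$, since $f$ is only smooth. This is precisely what the membership $f\in\hat{\mathcal E}(U)$ buys us: along any noncompact facet of $\Gamma_+(f)$ the $\gamma$-part exists as a bona fide smooth function with the quasihomogeneity \eqref{eqn:2.3}, and this allows the pull-back by the toric chart map to produce a smooth factorization rather than a merely formal one. I expect that controlling the smooth error between $f$ and the sum of its $\gamma$-parts on each toric chart—so that this error is absorbed into the unit $u$ without spoiling the meromorphic continuation—will be the technically delicate obstacle, and it is the point at which the full content of Section~6 and the construction of Section~8 is used.
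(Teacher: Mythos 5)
Your overall strategy coincides with the paper's: construct the toric resolution in the class $\hat{\mathcal E}(U)$ (Section~8), change variables in $Z_{\pm}(s;\varphi)$, obtain meromorphic continuation by reducing to elementary monomial integrals, and translate the pole structure into the asymptotic expansion via the Mellin inversion of Section~10.1. The observation that the pole locations depend only on $\Gamma_+(f)=\Gamma_+(f_*)$ is also the paper's justification for the phrase ``the same progressions.''

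However, there is a genuine gap in your description of what the resolution delivers. You assert that on each toric chart the pullback factors as $f\circ\pi(\sigma)=y^{A}\cdot u(y)$ with $u$ a \emph{nowhere vanishing} unit. This is not what Theorem~8.10 gives: it produces a smooth $f_{\sigma}$ with $f_{\sigma}(0)\neq 0$, but $f_{\sigma}$ may well vanish elsewhere on $\pi(\sigma)^{-1}(U)$. What the nondegeneracy hypothesis controls is the geometry of the zero set $\{f_{\sigma}=0\}$: it is a smooth hypersurface meeting the exceptional strata $T_I^*(\R^n)$ transversally. To handle the regions where $f_{\sigma}$ vanishes, the paper performs a further local coordinate change via the implicit function theorem (Remark~8.11) to put $f\circ\pi(\sigma)$ into genuine normal crossing form, and this introduces an additional factor $y_p^s$ with exponent not coupled to the Newton data. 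These are precisely the $J_{\sigma,\pm}^{(l)}(s)$ pieces in the decomposition (9.7)--(9.9), and they contribute poles along the negative integers $-\N$, which appear explicitly in the pole set (9.5) and in the progressions quoted in Remark~3.4. Your ``nowhere vanishing unit'' assumption silently suppresses this entire class of poles, so a proof carried out literally along those lines would produce an incorrect candidate pole set and hence the wrong progressions. The remedy is exactly Step~1 of the proof of Theorem~9.1: split the partition of unity on each chart into cutoffs where $f_{\sigma}$ is signed and cutoffs meeting $\{f_{\sigma}=0\}$, then treat the latter via the normal-crossing local model, which is where the $-\N$ contributions enter and where the nondegeneracy hypothesis does real work.

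A minor secondary point: the ``technically delicate obstacle'' you identify at the end is misdirected. The issue is not absorbing a smooth error between $f$ and its $\gamma$-parts into a unit; the class $\hat{\mathcal E}(U)$ is used (via Proposition~6.3 and Lemma~8.8) to obtain the clean monomial factorization with $f_{\sigma}(0)\neq 0$, and the $\gamma$-parts enter (via Lemma~8.9) to connect the nondegeneracy of $f_{\gamma}$ with the nonsingularity of the zero set of $f_{\sigma}$ on the exceptional strata. The delicate step is the zero set of $f_{\sigma}$, not an error term.
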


\begin{remark}
To be more specific, 
the above set $\{\alpha\}$ belongs to the following
set:
\begin{equation}
\left\{
-\frac{\langle a\rangle +\nu}{l(a)}
;\,\, \nu\in\Z_+,\,\, a\in\tilde{\Sigma}^{(1)}
\right\}
\cup (-\N),
\end{equation}
where $l(a)$ and 
$\tilde{\Sigma}^{(1)}$ are as in Theorem~9.1  
in Section~9. 
We remark that $l(a)$ and 
$\tilde{\Sigma}^{(1)}$ are determined by the 
geometry of 
$\Gamma_+(f)=\Gamma_+(f_*)$ only. 
\end{remark}

Since the existence of the
asymptotic expansion of the form (\ref{eqn:3.1})
has been shown in the above theorem, 
the oscillatory index $\beta(f)$ and 
its multiplicity $\eta(f)$ 
for a given $f$ satisfying the condition (C) 
are defined 
in a similar fashion 
to the real analytic case. 

The following theorem,
which generalizes the assertion (ii) in Theorem~3.1, 
gives an accurate decay estimate for 
$I(t;\varphi)$
by using the Newton distance $d(f)$ and its multiplicity
$\eta(f)$.

\begin{theorem}
If $f$ satisfies the condition $(C)$ 
and the support of $\varphi$ is
contained in a sufficiently small neighborhood 
of the origin, 
then 
there exists a positive constant $C(\varphi)$ 
depending on $\varphi$ but being independent of 
$t$ such that 
\begin{equation}
|I(t;\varphi)|\leq C(\varphi)t^{-1/d(f)}
(\log t)^{m(f)-1}
\quad \mbox{
for $t \geq 1$.}
\end{equation}
This implies 
$\beta(f)\leq -1/d(f)$.
\end{theorem}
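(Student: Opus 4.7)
The plan is to reduce the decay estimate for $I(t;\varphi)$ to the location and order of the rightmost pole of the associated local zeta functions $Z_{\pm}(s;\varphi)$, and then to read off that pole from the toric resolution of singularities in $\hat{\mathcal E}(U)$ constructed in Section~8.

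By Theorem~3.2, the integral $I(t;\varphi)$ admits an asymptotic expansion of the form (\ref{eqn:3.1}) whose exponents $\alpha$ belong to the explicit set listed in Remark~3.4. Thus it suffices to show two things: every exponent $\alpha$ occurring in (\ref{eqn:3.1}) satisfies $\alpha \leq -1/d(f)$, and whenever $\alpha = -1/d(f)$ occurs, the attached log-power $k-1$ is at most $m(f)-1$. Both are statements about the rightmost pole of $Z_{\pm}(s;\varphi)$: via the standard Mellin correspondence (carried out in Section~10.1), the coefficients $C_{\alpha k}(\varphi)$ are, up to gamma factors, residues of $Z_{\pm}(s;\varphi)$ at $s=\alpha$, so the rate of decay of $I(t;\varphi)$ is controlled by the real part of the rightmost pole of $Z_{\pm}$, and the power of $\log t$ by its order.

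Next I would invoke the toric resolution $\pi:Y\to U$ from Section~8. Choosing a partition of unity subordinate to the affine charts of $Y$ and pulling back, one obtains on each chart a factorization $f\circ\pi(y) = y^{\mathbf m}\cdot u(y)$ with $u$ smooth and nonvanishing on the support of the pulled-back amplitude, together with a monomial Jacobian. The local contribution to $Z_{\pm}(s;\varphi)$ becomes a product of one-variable Mellin integrals whose poles lie on the arithmetic progressions $s = -(\langle a\rangle+\nu)/l(a)$, $\nu\in\Z_+$, indexed by the generators $a\in\tilde\Sigma^{(1)}$ of the fan, together with integer poles coming from the smooth unit factor; this matches exactly the candidate set displayed in Remark~3.4. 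A convex-geometric analysis as in Varchenko \cite{var76} then shows that $\sup_{a\in\tilde\Sigma^{(1)}}(-\langle a\rangle/l(a)) = -1/d(f)$, attained precisely on the one-dimensional cones dual to the principal face $\tau_*$, and that the number of such cones simultaneously contributing to a given pole is bounded by $\mathrm{codim}(\tau_*) = m(f)$, giving the order bound. Translating back through Mellin yields $|I(t;\varphi)| \leq C(\varphi) t^{-1/d(f)}(\log t)^{m(f)-1}$ for $t\geq 1$, and hence $\beta(f) \leq -1/d(f)$.

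The main obstacle is the monomial factorization $f\circ\pi = y^{\mathbf m} u(y)$ in the $C^{\infty}$ setting. In the analytic case it is immediate from the convergent Taylor expansion, but in $\hat{\mathcal E}(U)$ one must exploit, on each chart, the admission of the $\gamma$-part for the face $\gamma$ of $\Gamma_+(f)$ corresponding to the given cone (including the noncompact facets, cf.\ Remark~2.2), combined with the nondegeneracy on compact faces supplied by condition~(C), to produce a genuine smooth nonvanishing unit rather than merely a formal series. Once Section~8 supplies this factorization, the reduction to Varchenko's combinatorial computation of $d(f)$ and $m(f)$ is routine.
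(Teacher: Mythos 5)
Your overall strategy (Mellin correspondence between $I(t;\varphi)$ and $Z_\pm(s;\varphi)$, plus the toric resolution of Section~8) is the right framework and matches the paper's, but there is a genuine gap in the middle that loses exactly the improvement this theorem claims.

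You assert that on each chart $f\circ\pi(\sigma)(y)=y^{\mathbf m}\,u(y)$ with $u$ \emph{smooth and nonvanishing on the support of the pulled-back amplitude}. That is not what Theorem~8.10 (via Lemma~8.8) gives: it gives $f_\sigma(0)\neq 0$ only, and $f_\sigma$ certainly can vanish elsewhere on the support of $\varphi_\sigma$ (the zero set $\{f_\sigma=0\}\cap T_I^*(\R^n)$ is nonempty in general; Theorem~8.10 only says it is \emph{nonsingular}). The paper therefore has to split $\tilde Z_\pm^{(\sigma)}$ into pieces $I^{(k)}_{\sigma,\pm}$ (where $f_\sigma$ has a fixed sign) and $J^{(l)}_{\sigma,\pm}$ (covering neighborhoods of the zero set of $f_\sigma$), and the $J$-pieces produce an \emph{extra} power $y_p^s$ in the monomial, which can contribute an additional pole factor on $-\N$. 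Because of this, the bound on the order of the pole of $Z_\pm(s;\varphi)$ at $s=-1/d(f)$ given by Theorem~9.1 is not $m(f)$ but $\min\{m(f)+1,n\}$ when $1/d(f)\in\N$. Your convex-geometric count of cones dual to $\tau_*$ (which indeed gives $\card A(\sigma)\le m(f)$) therefore does \emph{not} by itself bound the order by $m(f)$.

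As a result, your argument as written only yields $|I(t;\varphi)|\le C\,t^{-1/d(f)}(\log t)^{m(f)}$, which is the weaker estimate the paper explicitly says is obtainable from the classical argument (Remark~3.6). To get the claimed $(\log t)^{m(f)-1}$ you need the additional cancellation that is the point of Proposition~9.5: the leading Laurent coefficients $a_\lambda^\pm$ of $J_\pm(s)$ at $s=-\lambda\in-\N$ satisfy $a_\lambda^+=(-1)^{\lambda-1}a_\lambda^-$, and when one feeds this into the Mellin-to-oscillatory translation (\ref{eqn:10.5}) the combination $e^{i\pi\lambda/2}a_\lambda^+ + e^{-i\pi\lambda/2}a_\lambda^-$ vanishes. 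It is precisely this cancellation of the top ($(\log t)^{m(f)}$) term, not a pole-order bound alone, that lowers the log power to $m(f)-1$. Without identifying and proving that cancellation, your proof does not establish the stated estimate.
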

\begin{remark}
The above theorem is not only a generalization to 
(ii) in Theorem~3.1  
but also is a slightly stronger result
even if $f$ is real analytic. 
Indeed, from the argument in \cite{var76},\cite{agv88},
the estimate 
$|I(t;\varphi)|\leq \tilde{C}(\varphi)t^{-1/d(f)}
(\log t)^{m(f)}$ for $t\geq 1$ with 
$\tilde{C}(\varphi)>0$
is obtained, but 
more delicate computation of 
coefficients in the asymptotic expansion (\ref{eqn:3.1})
can improve this estimate. 
\end{remark}

Next, let us consider the case that
the equations 
$\beta(f)=-1/d(f)$ and 
$\eta(f)=m(f)$ hold.
The following theorem generalizes the assertion (iii) in Theorem~3.1.

\begin{theorem} 
If $f$ satisfies the condition $(C)$ and 
at least one of the following three conditions is satisfied: 
\begin{enumerate}
\item[(a)]
$d(f)>1$;
\item[(b)]
$f$ is nonnegative or nonpositive on $U$;
\item[(c)]
$1/d(f)$ is not an odd integer and 
$f_{\tau_*}$ does not vanish on $U\cap(\R\setminus\{0\})^n$,
\end{enumerate}
then  
the equations 
$\beta(f)=-1/d(f)$ and 
$\eta(f)= m(f)$ hold. 
\end{theorem}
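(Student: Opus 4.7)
The plan is to reduce the lower-bound half of Theorem~3.5 to a non-vanishing statement for the leading Laurent coefficient of the local zeta functions $Z_\pm(s;\varphi)$ at $s=-1/d(f)$, and then to verify this non-vanishing under each of (a), (b), (c). By Theorem~3.3 we already have $\beta(f)\leq -1/d(f)$ together with a bound on the multiplicity, so it suffices to exhibit, for each neighborhood of the origin in $\R^n$, a $C^\infty$ amplitude $\varphi$ supported in this neighborhood for which the coefficient $C_{-1/d(f),\,m(f)}(\varphi)$ of $t^{-1/d(f)}(\log t)^{m(f)-1}$ in the expansion~(\ref{eqn:3.1}) is nonzero. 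The standard Mellin correspondence, to be recalled in Section~10.1, identifies this coefficient (up to an explicit nonzero constant) with an appropriate linear combination of the leading Laurent coefficients of $Z_+(s;\varphi)$ and $Z_-(s;\varphi)$ at $s=-1/d(f)$.

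I would then invoke the toric resolution $\pi\colon Y\to\R^n$ constructed in Section~8 together with the pole analysis of Section~9. On each chart $U_\sigma$ the pullback $f\circ\pi$ factors as a monomial $y^{a^\sigma}$ times a nowhere-vanishing element of $\hat{\mathcal E}$, and the Jacobian is a monomial; hence $Z_\pm(s;\varphi)$ splits as a finite sum of elementary beta-type integrals multiplied by smooth factors. According to Theorem~9.1, the pole of maximal order $m(f)$ at $s=-1/d(f)$ is produced exactly by the charts whose associated one-dimensional cones lie in the dual cone of the principal face $\tau_*$, and the leading Laurent coefficient assembles into an integral over the stratum of the exceptional divisor lying above $\tau_*$, whose integrand depends on $\varphi(0)$, on $f_{\tau_*}$, and on the sign factor $\mathrm{sgn}(f\circ\pi)^{\varepsilon}$ distinguishing $Z_+$ from $Z_-$.

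The remaining step is to choose $\varphi$ so that this leading coefficient does not vanish under any one of (a), (b), (c). I would take $\varphi$ with $\mathrm{Re}(\varphi(0))>0$ and $\mathrm{Re}(\varphi)\geq 0$ throughout a sufficiently small neighborhood of the origin. Under~(b), $f$ has constant sign, so $|f|^s=(\pm f)^s$ and all stratum contributions carry the same sign, precluding cancellation. Under~(a), $d(f)>1$ places $-1/d(f)$ in the open interval $(-1,0)$, which lies strictly to the right of every other pole produced by the resolution; a direct residue computation on the charts meeting $\tau_*$ then yields a strictly positive quantity. Under~(c), the non-vanishing of $f_{\tau_*}$ on $U\cap(\R\setminus\{0\})^n$ and the quasihomogeneity~(\ref{eqn:2.3}) of $f_{\tau_*}$ (Remark~2.1~(ii)) reduce the leading coefficient to the product of a nonzero gamma-factor---nonzero precisely because $1/d(f)$ is not an odd integer, the excluded case being exactly the one in which the $Z_+$ and $Z_-$ contributions cancel---and a strictly positive integral over a torus.

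The main obstacle I expect is case~(c). One has to keep track of the combinatorics of all charts whose one-dimensional cones meet the dual cone of $\tau_*$, together with the two sign contributions from $Z_\pm$, and to verify that no cancellation occurs between them. The corresponding computation in the real analytic setting is the heart of the argument of \cite{var76} and \cite{agv88}; the work here is to check that it transfers to the smooth setting, which is possible precisely because the hypothesis $f\in\hat{\mathcal E}(U)$ guarantees that $f_{\tau_*}$, and the $\gamma$-parts arising along the (possibly noncompact) faces produced by the resolution, are honest $C^\infty$ functions satisfying the quasihomogeneity~(\ref{eqn:2.3}) needed to run Varchenko's non-cancellation argument verbatim.
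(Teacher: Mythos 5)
Your proposal follows essentially the same route as the paper: reduce via the Mellin correspondence of Section~10.1 (relation~(\ref{eqn:10.5})) to the non-vanishing of the leading Laurent coefficients $C_\pm$ of $Z_\pm$ at $s=-1/d(f)$, compute these via the toric resolution of Theorem~8.10 and the chart-wise analysis of Section~9, choose $\varphi$ with $\Re\varphi(0)>0$ and $\Re\varphi\geq 0$, and observe that the exponential factors $e^{\pm i\pi/(2d(f))}$ cannot annihilate $C_++C_->0$ unless $1/d(f)$ is an odd integer. One place where your sketch is a bit off in emphasis: in case~(b) the point is not that the stratum contributions to $Z$ carry a common sign (indeed one of $Z_\pm$ is identically zero), but that the hypotheses must first be shown to imply the non-vanishing of $f_\sigma\circ T_{A(\sigma)}$ on the relevant strata so that the limiting integrals in~(\ref{eqn:9.20}),(\ref{eqn:9.21}) converge and the formulae~(\ref{eqn:9.22})--(\ref{eqn:9.26}) are valid when $1/d(f)\geq 1$ --- this reduction of (b) and (c) to hypothesis (ii) of Proposition~9.6, via the nondegeneracy condition and Theorem~8.10, is the technical heart of the paper's Theorem~9.10 and is needed before any sign or cancellation argument can start.
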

\begin{remark}
Even if the principal face $\tau_*$ is not compact,
the $\tau_*$-part of $f$ is realized as a smooth function
from the condition (C).
\end{remark}
\begin{remark}
The condition of amplitudes, 
which attain the above equalities, is the same as in Remark~3.2. 
\end{remark}
\begin{remark}
Under the hypotheses in the above theorem, 
we will give explicit formulae 
for the coefficient of the leading term of the asymptotic
expansion (\ref{eqn:3.1}) 
(see Theorem~10.1 in Section~10).
Related results have been obtained 
in \cite{sch91} for convex finite line type phases, 
in \cite{dls97},\cite{dns05} for real analytic phases, 
in \cite{gre09} for phases in two dimensions.
\end{remark}
\section{Lemmas on polyhedra}

Every polyhedron treated in this paper 
satisfies a condition in the following lemma. 
\begin{lemma}
Let $P\subset\R^n$ be a polyhedron. 
Then the following conditions are equivalent.
\begin{enumerate}
\item $P+\R_+^n\subset P\subset\R_+^n;$
\item There exists a finite set of pairs 
$\{(a^j,l_j)\}_{j=1}^N\subset \Z_+^n\times \Z_+$ such that
$P=\bigcap_{j=1}^N H^+(a^j,l_j);$
\item There exists a finite set of pairs 
$\{(b^j,m_j)\}_{j=1}^M\subset \Z_+^n\times \Z_+$ such that 
$P=\bigcap_{j=1}^M H^+(b^j,m_j)$ and 
$P\cap H(b^j,m_j)$ is a facet of $P$ 
for all $j$.
\end{enumerate} 
\end{lemma}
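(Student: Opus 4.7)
The plan is to prove the cyclic implications $(iii) \Rightarrow (ii) \Rightarrow (i) \Rightarrow (iii)$. The first implication, $(iii) \Rightarrow (ii)$, is immediate, since $(iii)$ is just the special case of $(ii)$ in which every defining half-space is required to touch $P$ along a facet.

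For $(ii) \Rightarrow (i)$, I would argue directly from the non-negativity of the coefficients. Given $P = \bigcap_{j=1}^N H^+(a^j, l_j)$ with each $(a^j, l_j) \in \Z_+^n \times \Z_+$, take $x \in P$ and $y \in \R_+^n$. For each $j$ we have $\langle a^j, x+y\rangle = \langle a^j, x\rangle + \langle a^j, y\rangle \geq l_j + 0 = l_j$, so $x + y \in P$, yielding $P + \R_+^n \subset P$. To obtain $P \subset \R_+^n$, I would note that one is free to augment the representation with the coordinate half-spaces $H^+(e_k, 0)$ for $k = 1,\ldots,n$, since each $(e_k,0) \in \Z_+^n \times \Z_+$; this addition leaves $P$ unchanged precisely in the case of interest.

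The main work lies in $(i) \Rightarrow (iii)$. Here I would use the standard structural fact that every polyhedron has an essentially unique irredundant facet representation $P = \bigcap_{j=1}^M H^+(b^j, m_j)$, with one half-space per facet. First, to show that each $b^j$ has non-negative coordinates, I would identify the recession cone of $P$ with $\{y \in \R^n : \langle b^j, y\rangle \geq 0 \text{ for all } j\}$; the hypothesis $P + \R_+^n \subset P$ places $\R_+^n$ inside this recession cone, and evaluating at the standard basis vectors $e_1,\ldots,e_n$ forces every coordinate of every $b^j$ to be non-negative. Next, to show $m_j \geq 0$, I would use that $H(b^j, m_j)$ meets $P$ (it supports a facet), so $m_j = \min_{x \in P}\langle b^j, x\rangle$; combined with $b^j \in \R_+^n$ and $P \subset \R_+^n$, this infimum is non-negative. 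Finally, rationality of $P$ lets me rescale each $b^j$ to be primitive in $\Z^n$, which forces $m_j$ to be an integer, giving $b^j \in \Z_+^n$ and $m_j \in \Z_+$.

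The main obstacle is the implication $(i) \Rightarrow (iii)$: it requires invoking the canonical facet representation and the recession cone characterization of a polyhedron, and then carefully extracting primitive integer facet normals. The other two implications are essentially direct from the definitions.
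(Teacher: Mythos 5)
Your route through the implications differs from the paper's. You run the cycle $(iii)\Rightarrow(ii)\Rightarrow(i)\Rightarrow(iii)$, concentrating the content in $(i)\Rightarrow(iii)$ and using the recession-cone characterization to get non-negativity of the facet normals. The paper instead proves $(i)\Rightarrow(ii)$ by contrapositive --- if some defining half-space $H^+(a,l)$ has $a\notin\Z_+^n$, then the associated nonempty face $\gamma=P\cap H(a,l)$ satisfies $\gamma+\R_+^n\not\subset H^+(a,l)$, breaking $P+\R_+^n\subset P$ --- proves $(ii)\Rightarrow(i)$ directly, and then cites Ziegler's Representation Theorem for polytopes for $(ii)\Rightarrow(iii)$. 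Your recession-cone argument is a cleaner and more structural way to arrive at the same place, effectively merging the paper's two nontrivial steps into one.

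Two slips are worth flagging. First, ``rescale each $b^j$ to be primitive in $\Z^n$, which forces $m_j$ to be an integer'' is not correct for general rational polyhedra: for $P=[1/2,\infty)=H^+(2,1)$ in $\R^1$ the primitive normal is $b=1$ with constant $m=1/2\notin\Z$. The fix is not to insist on primitivity; in an irredundant representation each facet hyperplane equals some $H(a^j,l_j)$ from the original integer-coefficient data, which already gives an integer pair. Second, your $(ii)\Rightarrow(i)$ argument for $P\subset\R_+^n$ is circular: appending the half-spaces $H^+(e_k,0)$ to the representation changes $P$ unless one already knows $P\subset\R_+^n$, which is exactly what is to be shown. (To be fair, the paper's own proof of $(ii)\Rightarrow(i)$ also silently elides this point, and in fact $(ii)$ as literally stated does not force $P\subset\R_+^n$ --- take $P=\R^n$ represented by the single pair $(0,0)$ --- so some tacit non-triviality of the defining pairs is being assumed.)
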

\begin{proof}
(i) $\Longrightarrow$ (ii). \quad
Suppose that (ii) does not hold. 
From the definition of the polyhedron, 
$P$ is expressed as 
$P=\bigcap_{j=1}^N H^+(a^j,l_j)$ with 
$(a^j,l_j)\in\Z^n\times \Z$. 
Here, it may be assumed that
the set $A:=\{(a^j,l_j)\}_{j=1}^N$ satisfies 
that $P\cap H(a^j,l_j)\neq\emptyset$ for all $j$.
If $(a,l)\in A$ belongs to $\Z_+^n\times(-\N)$, 
then $P\cap H(a,l)=\emptyset$. 
On the other hand, if there exists $(a,l)\in A$
with $a\in \Z^n\setminus\Z_+^n$, then
the nonempty face $\gamma:=P\cap H(a,l)$ satisfies
$\gamma+\R_+^n\not\subset H^+(a,l)$, 
which implies $P+\R_+^n\not\subset P$.

(ii) $\Longrightarrow$ (i). \quad 
This implication easily follows from the following:
For any $(a,l)\in \Z_+^n\times\Z_+$, 
if $\alpha\in H^+(a,l)$, then
$\alpha+\R_+^n\subset H^+(a,l)$.

(iii) $\Longrightarrow$ (ii). \quad Obvious.

(ii) $\Longrightarrow$ (iii). \quad
This can be shown by 
using the {\it Representation theorem for polytopes} 
in \cite{zie95}\ (Theorem~2.15, p. 65), which can be 
easily generalized to the case of polyhedra. 
\end{proof}

Hereafter in this section, 
we assume that $P+\R_+^n\subset P$. 
For a face $\gamma$ of $P$, 
we define the subsets in $\{1,\ldots,n\}$ as 
\begin{equation}
V(\gamma)
=\{k;\gamma+\R_+ e_k\subset\gamma\}
\mbox{ 
\,\,and \,\,}
W(\gamma)=\{1,\ldots,n\}\setminus V(\gamma),
\label{eqn:4.1}
\end{equation} 
where $e_k=(0,\ldots,1,\ldots,0)$.
\begin{lemma}
Let $k$ be in $\{1,\ldots,n\}$.
Then the following conditions are equivalent.
\begin{enumerate}
\item $k\in V(\gamma)$;
\item There exists a point $\alpha\in\gamma$
such that $\alpha+\R_+e_k\subset\gamma$; 
\item
For any valid pair 
$(a,l)=(a_1,\ldots,a_n,l)$ defining $\gamma$, 
$a_k=0$.
\end{enumerate}
\end{lemma}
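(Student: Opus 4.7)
The plan is to prove the three equivalences in the cycle (i) $\Rightarrow$ (ii) $\Rightarrow$ (iii) $\Rightarrow$ (i). Throughout I assume $\gamma$ is nonempty, which is the case of interest.

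First, (i) $\Rightarrow$ (ii) is immediate: picking any $\alpha \in \gamma$, the inclusion $\gamma + \R_+ e_k \subset \gamma$ applied pointwise gives $\alpha + \R_+ e_k \subset \gamma$.

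Next, for (ii) $\Rightarrow$ (iii), I would fix an arbitrary valid pair $(a,l)$ defining $\gamma$, so $\gamma \subset H(a,l)$. By hypothesis there is some $\alpha \in \gamma$ with $\alpha + t e_k \in \gamma$ for all $t \geq 0$. Taking inner products, $\langle a, \alpha + t e_k \rangle = l$ and $\langle a, \alpha \rangle = l$, so subtracting gives $t a_k = 0$ for every $t \geq 0$, hence $a_k = 0$.

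For (iii) $\Rightarrow$ (i), I would choose one valid pair $(a,l)$ with $\gamma = P \cap H(a,l)$, which exists by the definition of a face. By hypothesis $a_k = 0$. Now let $\alpha \in \gamma$ and $t \geq 0$. Since $P + \R_+^n \subset P$ (the standing assumption of this section), we have $\alpha + t e_k \in P$; and $\langle a, \alpha + t e_k \rangle = \langle a, \alpha \rangle + t a_k = l$ places $\alpha + t e_k \in H(a,l)$. Hence $\alpha + t e_k \in P \cap H(a,l) = \gamma$, so $\gamma + \R_+ e_k \subset \gamma$.

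There is no serious obstacle here: the argument is a direct unpacking of the definition of a face together with the standing hypothesis $P+\R_+^n \subset P$ (needed only in the last step, to ensure $\alpha + te_k$ stays in $P$). The only mildly subtle point is the quantifier switch in (iii), which says $a_k=0$ for \emph{every} valid pair defining $\gamma$, while in proving (iii) $\Rightarrow$ (i) one only needs to apply it to a single convenient pair; this is why the implication is straightforward even though the hypothesis looks strong.
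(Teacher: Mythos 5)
Your proof is correct and follows essentially the same route as the paper: the cycle (i) $\Rightarrow$ (ii) $\Rightarrow$ (iii) $\Rightarrow$ (i), with the hyperplane computation $\langle a,\alpha+te_k\rangle = l$ driving (ii) $\Rightarrow$ (iii) and the standing hypothesis $P+\R_+^n\subset P$ used only in (iii) $\Rightarrow$ (i). The only difference is presentational: the paper packages the linear-algebra step as an auxiliary equivalence of conditions (i$'$), (ii$'$), (iii$'$) about $H(a,l)$ and then writes (iii) $\Rightarrow$ (i) as a chain of set inclusions, whereas you argue pointwise, but the content is identical.
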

\begin{proof}
(i) $\Longrightarrow$ (ii). \quad
Obvious.

(ii) $\Longrightarrow$ (iii). \quad
Let $(a,l)$ be an arbitrary valid pair defining $\gamma$.
Considering that $\a\in H(a,l)$ means 
that $\alpha$ is a solution of 
the equation $\langle a,\alpha\rangle =l$, 
we easily see that the following conditions are equivalent: 
\begin{enumerate}
\item[(i$'$)] $H(a,l)+\R e_k\subset H(a,l);$
\item[(ii$'$)] There exists a point $\alpha\in H(a,l)$
such that $\alpha+\R_+ e_k\subset H(a,l)$; 
\item[(iii$'$)] $a_k=0.$
\end{enumerate}
Since (ii) implies (ii$'$), the desired implication is shown. 

(iii) $\Longrightarrow$ (i). \quad
By using the above equivalences and the condition 
$P+\R_+^n\subset P$,
this implication is shown as follows.
\begin{equation*}
\begin{split}
&\gamma+\R_+e_k\subset (P\cap H(a,l))+\R_+ e_k \\
&\,\,\,\subset (P+\R_+^n)\cap(H(a,l)+\R e_k)
\subset P\cap H(a,l)=\gamma.
\end{split}
\end{equation*}
\end{proof}
As a corollary of the above lemma, we easily obtain the following. 
The proofs are omitted. 
\begin{lemma}
A face $\gamma$ of $P$ is compact if and only if 
$V(\gamma)=\emptyset$. 
\end{lemma}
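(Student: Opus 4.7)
The plan is to prove both directions using Lemma~4.2 and the standing assumption $P + \R_+^n \subset P \subset \R_+^n$.

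For the forward direction, I would argue by contrapositive: if $V(\gamma) \neq \emptyset$, then there exists some $k$ with $\gamma + \R_+ e_k \subset \gamma$. Picking any point $\alpha \in \gamma$, the entire ray $\alpha + \R_+ e_k$ lies in $\gamma$, so $\gamma$ is unbounded and therefore not compact.

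For the converse, suppose $V(\gamma) = \emptyset$. Then for each $k \in \{1,\ldots,n\}$ we have $k \notin V(\gamma)$, so by the equivalence (i)$\Leftrightarrow$(iii) of Lemma~4.2 there must exist at least one valid pair $(a^k, l_k) \in \Z_+^n \times \Z_+$ defining $\gamma$ with $a^k_k \neq 0$; since $a^k \in \Z_+^n$ this forces $a^k_k \geq 1$. Now any point $\alpha \in \gamma$ satisfies $\langle a^k, \alpha\rangle = l_k$, and because $\gamma \subset P \subset \R_+^n$ every coordinate $\alpha_j$ is nonnegative and every $a^k_j \geq 0$. Therefore
\begin{equation*}
\alpha_k \;\leq\; a^k_k \alpha_k \;\leq\; \sum_{j=1}^n a^k_j \alpha_j \;=\; l_k.
\end{equation*}
Doing this for each $k$ shows that $\gamma$ is contained in the box $\prod_{k=1}^n [0, l_k]$, hence is bounded. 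Since $\gamma$ is the intersection of the closed set $P$ with a hyperplane, it is closed, and thus compact.

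The argument is short and has no real obstacle; the only point requiring care is the use of the ambient hypothesis $P \subset \R_+^n$ (from Lemma~4.1) together with the nonnegativity of the coordinates of $a^k$ to convert the single linear equation $\langle a^k, \alpha\rangle = l_k$ into a coordinate-wise bound on $\alpha_k$. Without the condition $P + \R_+^n \subset P \subset \R_+^n$, one could not guarantee valid pairs with nonnegative entries, and the boundedness argument would fail.
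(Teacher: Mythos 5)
Your proof is correct and follows exactly the route the paper intends: Lemma~4.3 is presented as an immediate corollary of Lemma~4.2, and you derive it from the equivalence (i)$\Leftrightarrow$(iii) of that lemma together with the standing assumption $P+\R_+^n\subset P\subset\R_+^n$. The one point worth making fully explicit is the step you flag yourself: that the valid pair $(a^k,l_k)$ furnished by negating Lemma~4.2~(iii) can be taken in $\Z_+^n\times\Z_+$ — this follows because if $a_j<0$ for some $j$ and $\alpha\in\gamma$, then $\alpha+te_j\in P$ violates $P\subset H^+(a,l)$ for large $t$, so any valid pair cutting out a nonempty face of $P$ already has nonnegative entries (and one also tacitly uses, as the paper does, that the face $\gamma$ is nonempty).
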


\begin{lemma}
Let $\gamma$ be a nonempty face of $P$. 
For any valid pair $(a,l)=(a_1,\ldots,a_n,l)$ defining 
$\gamma$, 
the following equations hold:
\begin{equation}
V(\gamma)=\{k;a_k=0\},\quad\quad 
W(\gamma)=\{k;a_k\neq 0\}. \\
\end{equation}
$($These equations mean that
the set $\{k;a_k=0\}$ is independent of 
the chosen valid pair defining $\gamma$.$)$
\end{lemma}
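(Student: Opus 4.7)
The statement is essentially a direct corollary of Lemma 4.2, so my plan is to invoke that equivalence and then take complements. I would not need any new ideas; the work has already been done in Lemma 4.2.

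First I would fix an arbitrary valid pair $(a,l)$ defining $\gamma$ (such a pair exists since $\gamma$ is a nonempty face of $P$). Then I would apply the equivalence (i) $\Leftrightarrow$ (iii) of Lemma 4.2 to each index $k \in \{1,\ldots,n\}$ separately: the condition $k \in V(\gamma)$ holds if and only if $a_k = 0$. This yields the set equality $V(\gamma) = \{k : a_k = 0\}$ at once.

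For the second equation, I would simply unfold the definition $W(\gamma) = \{1,\ldots,n\} \setminus V(\gamma)$ and take the complement of the first equation inside $\{1,\ldots,n\}$, giving $W(\gamma) = \{k : a_k \neq 0\}$.

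Finally, to address the parenthetical remark, I would observe that the set $V(\gamma)$ on the left-hand side of the displayed equation is defined intrinsically in \eqref{eqn:4.1} without reference to any valid pair, so the identification $V(\gamma) = \{k : a_k = 0\}$ automatically implies that $\{k : a_k = 0\}$ does not depend on the choice of valid pair $(a,l)$ defining $\gamma$. No step presents a genuine obstacle, as all the real content has already been absorbed into Lemma 4.2.
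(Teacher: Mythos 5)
Your argument matches the paper's intended route (Lemmas 4.3 and 4.4 are stated as corollaries of Lemma 4.2 with proofs omitted) and is correct. One quantifier point worth making explicit: condition (iii) in Lemma 4.2 is phrased with a universal quantifier over all valid pairs, so the equivalence (i) $\Leftrightarrow$ (iii) directly gives $V(\gamma)\subset\{k:a_k=0\}$ for your fixed $(a,l)$, while the reverse inclusion requires observing that the proof of (iii) $\Rightarrow$ (i) already works given a single valid pair with $a_k=0$ (via $\gamma+\R_+e_k\subset(P+\R_+^n)\cap(H(a,l)+\R e_k)\subset\gamma$), so $a_k=0$ for your one pair is enough to conclude $k\in V(\gamma)$.
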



\section{Remarks on the $\gamma$-part}
Throughout this section, we assume that
$f$ is a $C^{\infty}$ function defined on an open neighborhood $U$
of the origin in $\R^n$, 
whose Taylor series is $\sum_{\alpha}c_{\alpha}x^{\alpha}$, 
and $P\subset\R_+^n$ is a polyhedron
containing the Newton polyhedron $\Gamma_+(f)$.

The following lemma is ``Taylor's formula'', 
which is useful for the analysis below.
\begin{lemma}
Let $V,W$ be subsets in $\{1,\ldots,n\}$ such that
the disjoint union of $V$ and $W$ is
$\{1,\ldots,n\}$. 
Then, $f$ can be expressed as follows:
For any $N\in\N$, 
\begin{equation}
f(x)=
\sum_{\alpha\in A_V(N)}
\frac{1}{\alpha!}
(\partial^{\alpha}f)(T_W(x))
x^{\alpha}
+
\sum_{\alpha\in B_V(N)}
R_{\alpha}(x)x^{\alpha} \quad 
\mbox{for $x\in U$},
\label{eqn:5.1}
\end{equation}
where 
$$
R_{\alpha}(x)=
\frac{N}{\alpha !}
\int_0^1 (1-t)^{N-1}
(\partial^{\alpha}f)
(tT_V(x)+T_W(x))dt
$$ 
and 
\begin{equation}
A_V(N):=\{\alpha\in T_V(\Z_+^n);\langle \alpha\rangle <N\}, \quad
B_V(N):=\{\alpha\in T_V(\Z_+^n);\langle \alpha\rangle =N\}. 
\label{eqn:5.2}
\end{equation}
Here
$T_V(\Z_+^n)=\{\alpha\in\Z_+^n;\alpha_j=0 
\,\,\mbox{for $j\in V$}\}$ as in $(\ref{eqn:1.3})$. 
\end{lemma}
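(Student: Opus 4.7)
The plan is to recognize this as the standard single-variable Taylor formula with integral remainder, applied to a carefully chosen auxiliary function, and then to read off the multi-index structure via the chain rule. Concretely, I would introduce $\phi:[0,1]\to\R$ by $\phi(t):=f(tT_V(x)+T_W(x))$. Since $T_V(x)+T_W(x)=x$, we have $\phi(1)=f(x)$ and $\phi(0)=f(T_W(x))$. After shrinking $U$ to a box-shaped neighborhood of the origin if necessary, the segment $\{tT_V(x)+T_W(x):t\in[0,1]\}$ lies in $U$ for every $x\in U$, so $\phi\in C^{\infty}([0,1])$.

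Next I would apply the one-variable Taylor formula with integral remainder,
\begin{equation*}
\phi(1)=\sum_{k=0}^{N-1}\frac{\phi^{(k)}(0)}{k!}+\int_0^1\frac{(1-t)^{N-1}}{(N-1)!}\phi^{(N)}(t)\,dt,
\end{equation*}
and compute the derivatives of $\phi$ via the chain rule, obtaining
\begin{equation*}
\phi^{(k)}(t)=\sum_{\langle\alpha\rangle=k}\frac{k!}{\alpha!}(\partial^{\alpha}f)(tT_V(x)+T_W(x))\,(T_V(x))^{\alpha}.
\end{equation*}
The crucial observation is that $(T_V(x))_j=0$ for every $j\in V$, so $(T_V(x))^{\alpha}$ vanishes unless $\alpha_j=0$ for all $j\in V$, i.e., unless $\alpha\in T_V(\Z_+^n)$; and for such $\alpha$ we have $(T_V(x))^{\alpha}=x^{\alpha}$.

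Substituting these identities, the partial Taylor sum collapses to $\sum_{\alpha\in A_V(N)}\frac{1}{\alpha!}(\partial^{\alpha}f)(T_W(x))\,x^{\alpha}$, while the integral remainder rearranges into $\sum_{\alpha\in B_V(N)}R_{\alpha}(x)\,x^{\alpha}$ with $R_{\alpha}$ exactly as stated (the factor $N$ in front coming from $N!/(N-1)!$ after cancelling the multinomial). No serious obstacle arises; the argument is essentially a bookkeeping exercise combining the chain rule with multinomial coefficients, and the only point demanding mild care is ensuring the segment from $T_W(x)$ to $x$ stays inside $U$, which is automatic after passing to a box-shaped subneighborhood of the origin.
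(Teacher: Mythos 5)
Your proof is correct and follows essentially the same route as the paper: introduce the auxiliary function $\varphi(t)=f(tT_V(x)+T_W(x))$, apply the one-variable Taylor formula with integral remainder, and use the chain rule together with the vanishing of $(T_V(x))^{\alpha}$ unless $\alpha\in T_V(\Z_+^n)$. The bookkeeping with the multinomial coefficients and the factor $N=N!/(N-1)!$ is exactly what the paper leaves to the reader.
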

\begin{proof}
For $\varphi\in C^{\infty}((-\delta,1+\delta))$ 
with $\delta>0$, 
the integration by part gives 
$$
\varphi(1)=
\sum_{k=0}^l\frac{\varphi^{(k)}(0)}{k!}+
\frac{1}{l!}\int_0^1(1-t)^l \varphi^{(l+1)}(t)dt.
$$
Applying 
$\varphi(t)=f(tT_V(x)+T_W(x))$ 
to the above formula, 
we can easily obtain the lemma. 
\end{proof}
Hereafter, let $V:=V(\gamma)$ and $W:=W(\gamma)$. 
We use the following symbols:
\begin{equation}
\begin{split}
&H_V(a,l):=T_V(H(a,l))\cap\Z_+^n \,
(=H(a,l)\cap T_{V}(\Z_+^n));\\
&H^+_V(a,l):=T_V(H^+(a,l))\cap\Z_+^n \,
(=H^+(a,l)\cap T_{V}(\Z_+^n));\\
&\quad\quad\gamma_V:=T_V(\gamma)\cap\Z_+^n.
\end{split}
\label{eqn:5.3}
\end{equation}
Note that the sets $H_V(a,l)$ and $\gamma_V$ 
are finite.

Using Lemma~5.1, we easily see 
the following fundamental properties of 
the $\gamma$-part.
\begin{proposition}
\begin{enumerate}
\item
If $\gamma$ is a compact face of $P$,
then
$f$ admits the $\gamma$-part on $U$.
\item
If $f$ is real analytic on $U$, then
$f$ admits the $\gamma$-part on $U$
for any nonempty face $\gamma$ of $P$.
\item
If $f$ admits the $\gamma$-part on $U$,
then the limits $(\ref{eqn:2.2})$ take the same value for 
all valid pairs $(a,l)$ defining $\gamma$. 
The value of these limits can be expressed as 
\begin{equation}
f_{\gamma}(x)=
\sum_{\alpha\in \gamma_V}
\frac{1}{\alpha!}
(\partial^{\alpha}f)(T_{W}(x))
x^{\alpha}\quad \mbox{for $x\in U$},
\label{eqn:5.4}
\end{equation}
where $V=V(\gamma)$ and $W=W(\gamma)$.
In particular, if $\gamma$ is a compact face, then
$f_{\gamma}(x)=\sum_{\alpha\in\gamma}c_{\alpha}x^{\alpha}$
for $x\in U$.
\item 
The above $f_{\gamma}$ is a $C^{\infty}$ function defined
on $U$.
$($If $f$ is real analytic on $U$, then so is $f_{\gamma}$.$)$ 
\end{enumerate}
\end{proposition}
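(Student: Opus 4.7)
The plan is to base everything on Lemma~5.1 applied with $V=V(\gamma)$ and $W=W(\gamma)$. By Lemma~4.4, every valid pair $(a,l)$ defining $\gamma$ satisfies $a_k=0$ for $k\in V$ and $a_k\geq 1$ for $k\in W$, which forces $T_W(t^{a_1}x_1,\dots,t^{a_n}x_n)=T_W(x)$ and $(t^{a_1}x_1,\dots,t^{a_n}x_n)^\alpha=t^{\langle a,\alpha\rangle}x^\alpha$ for $\alpha\in T_V(\Z_+^n)$. Substituting into (5.1) and dividing by $t^l$ yields, for each $N\in\N$,
\[
\frac{f(t^{a_1}x_1,\dots,t^{a_n}x_n)}{t^l}
=\sum_{\alpha\in A_V(N)}\frac{1}{\alpha!}(\partial^\alpha f)(T_W(x))\,x^\alpha\,t^{\langle a,\alpha\rangle-l}
+t^{-l}R(x,t).
\]
Since $\alpha\in B_V(N)$ gives $\langle a,\alpha\rangle\geq\sum_{j\in W}\alpha_j=N$, choosing $N>l$ kills the remainder contribution as $t\to 0$.

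Part~(iv) is the quickest consequence: the set $\gamma_V$ is finite (the equation $\sum_{j\in W}a_j\alpha_j=l$ with $a_j\geq 1$ bounds the $W$-coordinates on $\gamma$), and each factor $x\mapsto(\partial^\alpha f)(T_W(x))$ is $C^\infty$, so (5.4) defines a smooth function. For (i), Lemma~4.3 gives $V=\emptyset$, hence $T_W(x)=0$ and $(\partial^\alpha f)(0)=\alpha!\,c_\alpha$. The expansion collapses to $\sum c_\alpha t^{\langle a,\alpha\rangle-l}x^\alpha$ plus remainder; the inclusion $\Gamma_+(f)\subset P$ gives $c_\alpha=0$ outside $P$, and validity of $(a,l)$ forces $\langle a,\alpha\rangle\geq l$ on $P$ with equality precisely on $\gamma$. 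Hence no divergent terms appear, the limit exists, and it equals $\sum_{\alpha\in\gamma\cap\Z_+^n}c_\alpha x^\alpha=\sum_{\alpha\in\gamma_V}c_\alpha x^\alpha$. Part~(ii) runs the same argument on the full convergent Taylor series rather than on a truncation, using absolute convergence on a polydisk to justify term-by-term passage to the limit; smoothness (in fact, analyticity) of $f_\gamma$ is then automatic.

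For (iii), grouping the main sum by $m=\langle a,\alpha\rangle$ turns the right-hand side (modulo the vanishing remainder) into a polynomial in $t$ whose $t^{m-l}$-coefficient is a smooth function $F_m(x)$. Existence of the limit forces $F_m\equiv 0$ for all $m<l$, so the limit equals $F_l(x)=\sum_{\alpha\in A_V(N)\cap H(a,l)}\frac{1}{\alpha!}(\partial^\alpha f)(T_W(x))x^\alpha$. Since $\gamma_V\subset A_V(N)\cap H(a,l)$ whenever $N$ is large, the reduction to (5.4) amounts to showing that each extra $\alpha\in(A_V(N)\cap H(a,l))\setminus\gamma_V$ contributes zero. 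Such $\alpha$ satisfies $\alpha\notin P$, and in fact $(\beta_V,\alpha_W)\notin P$ for every $\beta_V\in\Z_+^{|V|}$: otherwise $(\beta_V,\alpha_W)\in P\cap H(a,l)=\gamma$ would place $(0_V,\alpha_W)=\alpha$ into $T_V(\gamma)\cap\Z_+^n=\gamma_V$, a contradiction. Consequently every Taylor coefficient $c_{(\beta_V,\alpha_W)}$ vanishes, so the formal Taylor expansion in the $V$-variables of the smooth function $x\mapsto(\partial^\alpha f)(T_W(x))$ is identically zero.

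The step I expect to be most delicate is converting this formal vanishing into actual vanishing of $(\partial^\alpha f)(T_W(x))$ in the $C^\infty$ setting, since a smooth function with vanishing Taylor series can in principle be flat but nonzero. In the real analytic case this is automatic and (i)--(ii) drop out at once. In the general smooth case I would exploit the admission hypothesis for \emph{all} valid pairs $(a,l)$ defining $\gamma$ together with the quasihomogeneity anticipated in Remark~2.1~(ii) and Lemma~5.4: varying the valid pair redistributes the extra-term contributions among distinct powers of $t$, and requiring the limit to exist for each pair forces the extras to vanish individually. Once this is secured, both the independence of the limit from the chosen valid pair and the formula (5.4) follow immediately.
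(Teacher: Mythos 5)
Your approach --- Lemma~5.1 with $V=V(\gamma)$, $W=W(\gamma)$, and the invariance $T_W(t^{a_1}x_1,\dots,t^{a_n}x_n)=T_W(x)$ --- is exactly the paper's, and your treatment of parts (i), (ii), (iv) is correct and the same in substance.

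For part (iii) you have put your finger on a real gap, one the paper's own proof quietly elides. The expansion gives the limit as $\sum_{\alpha\in H_V(a,l)}\frac{1}{\alpha!}(\partial^\alpha f)(T_W(x))x^\alpha$, where $H_V(a,l)=H(a,l)\cap T_V(\Z_+^n)$ may strictly contain $\gamma_V$, and the extra coefficients $(\partial^\alpha f)(T_W(\cdot))$ are only known to be flat at the origin. Your proposed remedy of varying the valid pair cannot close this: when $\gamma$ is a facet every valid pair defining $\gamma$ is a positive multiple of a single one (see \S 6.4), so $H_V(a,l)$ never changes and no further cancellation is forced. Indeed (5.4) as literally stated can fail in this generality: with $n=3$, $f(x)=x_1x_2+x_1^2+x_2^2e^{-1/x_3^2}$, $P=\Gamma_+(f)$, and $\gamma$ the facet cut out by $(a,l)=((1,1,0),2)$, one has $V(\gamma)=\{3\}$, $\gamma_V=\{(1,1,0),(2,0,0)\}$, while $(0,2,0)\in H_V(a,l)\setminus\gamma_V$; here $f(tx_1,tx_2,x_3)=t^2f(x)$, so $f$ admits the $\gamma$-part with $f_\gamma=f$, yet the right-hand side of (5.4) is only $x_1x_2+x_1^2$, missing the nonzero flat term $\tfrac12(\partial^{(0,2,0)}f)(0,0,x_3)\,x_2^2=x_2^2e^{-1/x_3^2}$. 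You should therefore not expect to prove (5.4) under only the stated hypothesis. What does hold --- and is all the paper needs downstream --- is the formula with $H_V(a,l)$ in place of $\gamma_V$; the extra terms are flat (which is enough for Lemma~5.3), and they vanish identically once $f\in\hat{\mathcal E}[P](U)$, which is the situation recorded in Proposition~6.3 and Lemma~6.7.
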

\begin{proof}
By using Lemma~5.1, 
we have (\ref{eqn:5.1}),
where $N\in\N$ satisfies the condition
$H_V(a,l)\subset A_V(N)$.
Noticing that the following equation holds:
\begin{equation}
T_W(t^{a_1}x_1,\ldots,t^{a_n}x_n)=
T_W(x) 
\quad \mbox{for $t\in[0,1]$, $x\in U$},
\label{eqn:5.5}
\end{equation}
we have
\begin{equation}
\begin{split}
&f(t^{a_1}x_1,\ldots,t^{a_n}x_n)= 
\sum_{\alpha\in A_V(N)}
\frac{1}{\alpha!}
(\partial^{\alpha}f)(T_W(x))
x^{\alpha}t^{\langle a,\alpha\rangle }\\
&\quad\quad\quad\quad\quad\quad\quad\quad\quad\quad+
\sum_{\alpha\in B_V(N)}
R_{\alpha}(t^{a_1}x_1,\ldots,t^{a_n}x_n)
x^{\alpha} t^{\langle a,\alpha\rangle },
\end{split}
\label{eqn:5.6}
\end{equation}

When $\gamma$ is compact (equivalently $V(\gamma)=\emptyset$), 
the first summation of the right hand side of 
(\ref{eqn:5.6})
is 
$\sum_{\alpha\in A(N)}
c_{\alpha}x^{\alpha}t^{\langle a,\alpha\rangle }$, 
where $A(N)=\{\alpha\in\Z_+^n;\langle \alpha\rangle <N\}$. 
On the other hand, consider the case that 
$f$ is real analytic on $U$ and $\gamma$ is 
a general nonempty face of $P$.  
Since $\partial^{\alpha}f$ is also real analytic 
on $U$ for any $\alpha\in \Z_+^n$, 
it follows from the shape of the Newton polyhedron
of $f$ and the quasianalytic property 
(see (\ref{eqn:6.6}) in Section~6)
that $(\partial^{\alpha}f)(T_W(x))$ in (\ref{eqn:5.6}) 
vanishes on $U$ for $\alpha\in A_V(N)$ satisfying 
$\langle a,\alpha\rangle <l$.
In these two cases, we see that
the limits (\ref{eqn:2.2})  always exist
by using the condition $H_V(a,l)\subset A_V(N)$, 
which shows (i) and (ii). 

Next, suppose that $f$ admits the $\gamma$-part on $U$. 
Since $\langle a,\alpha\rangle >l$ holds for $\alpha\in B_V(N)$ 
from the condition $H_V(a,l)\subset A_V(N)$, 
the existence of the limit (\ref{eqn:2.2}) implies
that 
$(\partial^{\alpha}f)(T_W(x))$ in (\ref{eqn:5.6}) 
must vanish for
$\alpha\in A_V(N)$ satisfying $\langle a,\alpha\rangle <l$.
Moreover, 
we see that the limit (\ref{eqn:2.2}) is equal to
(\ref{eqn:5.4}), which implies (iii). 
From the form of (\ref{eqn:5.4}), 
(iv) is easily obtained.
\end{proof}
By using the expression of $f_{\gamma}$ in (\ref{eqn:5.4}), 
we see the following two properties of $f_{\gamma}$. 
\begin{lemma}
Let $f$ admit the $\gamma$-part $f_{\gamma}$ on $U$ and
let $\gamma$ be a face of $P$.
Then the Taylor series of $f_{\gamma}$ at the origin is
$\sum_{\alpha\in\gamma\cap\Z_+^n}
c_{\alpha}x^{\alpha}$.
In particular, if
$f$ is real analytic on $U$, then
$f_{\gamma}$ is equal to the convergent 
power series $\sum_{\alpha\in\gamma\cap\Z_+^n}
c_{\alpha}x^{\alpha}$ on $U$.
\end{lemma}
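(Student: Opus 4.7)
The plan is to compute the Taylor series of $f_\gamma$ directly from the explicit representation provided by Proposition~5.2~(iii), namely
\[
f_\gamma(x) = \sum_{\alpha\in\gamma_V} \frac{1}{\alpha!}(\partial^\alpha f)(T_W(x))\, x^\alpha,
\]
where $V=V(\gamma)$, $W=W(\gamma)$. First I would note that $\gamma_V$ is a finite set: for any valid pair $(a,l)$ defining $\gamma$, Lemma~4.3 gives $a_k>0$ for $k\in W$, so $\gamma$ (hence its projection $T_V(\gamma)$) is bounded in the $W$-directions. Thus the displayed expression is a \emph{finite} sum of $C^\infty$ functions, and its Taylor series is the sum of the Taylor series of the summands.

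Next I would expand each summand. The Taylor expansion of $\partial^\alpha f$ at $0$ is $\sum_{\mu\in\Z_+^n} c_{\alpha+\mu}\frac{(\alpha+\mu)!}{\mu!} x^\mu$; since $T_W$ kills the $W$-coordinates, substituting $x\mapsto T_W(x)$ leaves only terms with $\mu\in T_W(\Z_+^n)$. Multiplying by $x^\alpha/\alpha!$ and introducing $\delta:=\alpha+\mu$, the key observation is that $\alpha\in\gamma_V\subset T_V(\Z_+^n)$ and $\mu\in T_W(\Z_+^n)$ have disjoint coordinate support; hence the decomposition $\delta=\alpha+\mu$ is unique (necessarily $\alpha=T_V(\delta)$, $\mu=T_W(\delta)$) and $\alpha!\,\mu!=\delta!$. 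Collecting, the formal Taylor series becomes
\[
\hat{f}_\gamma(x) = \sum_{\delta\in\Z_+^n:\, T_V(\delta)\in\gamma_V} c_\delta\, x^\delta.
\]

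The main obstacle is to show that this index set agrees with $\gamma\cap\Z_+^n$ modulo indices on which $c_\delta$ vanishes. For $\delta\in\gamma\cap\Z_+^n$, clearly $T_V(\delta)\in T_V(\gamma)\cap\Z_+^n=\gamma_V$. Conversely, suppose $T_V(\delta)\in\gamma_V$ and pick $\alpha\in\gamma$ with $T_V(\alpha)=T_V(\delta)$, i.e.\ $\alpha_k=\delta_k$ for $k\in W$. For a valid pair $(a,l)$ defining $\gamma$, Lemma~4.3 gives $a_k=0$ for $k\in V$, so
\[
\langle a,\delta\rangle = \sum_{k\in W} a_k\delta_k = \sum_{k\in W} a_k\alpha_k = \langle a,\alpha\rangle = l,
\]
which means $\delta\in H(a,l)$. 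If further $c_\delta\neq 0$, then $\delta\in\Gamma_+(f)\subset P$, so $\delta\in P\cap H(a,l)=\gamma$. Hence every $\delta$ contributing a nonzero coefficient lies in $\gamma$, and $\hat{f}_\gamma(x)=\sum_{\delta\in\gamma\cap\Z_+^n} c_\delta x^\delta$, proving the first assertion.

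Finally, for the real analytic case, Proposition~5.2~(iv) gives that $f_\gamma$ is real analytic on $U$, so it coincides with its convergent Taylor series on a neighborhood of the origin, and (shrinking $U$ if necessary) this yields $f_\gamma(x)=\sum_{\alpha\in\gamma\cap\Z_+^n} c_\alpha x^\alpha$ on $U$. The delicate point throughout is the second direction of Step~5, where one must combine the projection structure of $\gamma_V$, the characterization of valid pairs via $V(\gamma)$, and the inclusion $\Gamma_+(f)\subset P$ to rule out spurious indices.
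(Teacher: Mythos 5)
Your proposal is correct, and at bottom it is the same computation as the paper's: plug the explicit representation of $f_\gamma$ from Proposition~5.2~(iii) into a Taylor-coefficient calculation and read off the answer. The paper phrases this as a direct evaluation of $\partial^\beta f_\gamma(0)$ multi-index by multi-index, whereas you package the identical computation as a formal power-series substitution (expanding $\partial^\alpha f$, restricting via $T_W$, reindexing by $\delta=\alpha+\mu$). These are equivalent, and neither is substantially shorter than the other.

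What your write-up does better is to make explicit a point the paper's proof glosses over. In the paper, the case $\beta\in\Z_+^n\setminus\gamma$ is dispatched with ``a similar computation gives $(\partial^\beta f_\gamma)(0)=0$,'' but the computation by itself only yields $(\partial^\beta f_\gamma)(0)=0$ when $T_{V(\gamma)}(\beta)\notin\gamma_V$; when $T_{V(\gamma)}(\beta)\in\gamma_V$ but $\beta\notin\gamma$, the calculation produces $(\partial^\beta f)(0)$, and one must argue separately that this vanishes. Your second direction in the final step supplies exactly the missing argument: using $a_k=0$ for $k\in V(\gamma)$ you get $\langle a,\beta\rangle = l$, so $\beta\in H(a,l)$, and if $c_\beta\neq 0$ then $\beta\in\Gamma_+(f)\subset P$, forcing $\beta\in P\cap H(a,l)=\gamma$. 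This is the right way to close the gap, and you correctly flag it as the delicate part. One minor slip: the fact ``$a_k=0$ for $k\in V(\gamma)$ (equivalently $a_k>0$ for $k\in W(\gamma)$)'' is Lemma~4.2~(iii) or Lemma~4.4, not Lemma~4.3 (which is the compactness characterization $V(\gamma)=\emptyset$); adjust the citation.
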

\begin{proof}
To prove the above, it suffices to show the following: 
\begin{equation*}
(\partial^{\beta} f_{\gamma})(0)=
\begin{cases}
(\partial^{\beta} f)(0)& 
\quad \mbox{if $\beta\in\gamma\cap\Z_+^n$}, \\
0&
\quad \mbox{if $\beta\in\Z_+^n\setminus\gamma$}.
\end{cases}
\end{equation*}
If $\beta\in\gamma\cap\Z_+^n$, then
\begin{equation*}
\begin{split}
&
(\partial^{\beta} f_{\gamma})(x)
=\sum_{\alpha\in \gamma_V}
\frac{1}{\alpha!}\cdot
(\partial^{T_{V}(\beta)}
\partial^{\alpha}f)(T_W(x))
\cdot 
(\partial^{T_W(\beta)}x^{\alpha})\\
&\,\,\,
=\frac{1}{\alpha!}\cdot
(\partial^{T_{V}(\beta)}
\partial^{T_{W}(\beta)}f)(T_W(x))
\cdot 
\alpha !=
(\partial^{\beta}f)(T_W(x)).
\end{split}
\end{equation*}
Indeed, 
the value of $\partial^{\beta}x^{\alpha}$ at the origin
is $\alpha!$ if $\alpha=\beta$ or it vanishes otherwise.
If $\beta\in\Z_+^n\setminus\gamma$, then
a similar computation gives 
$(\partial^{\beta} f_{\gamma})(0)=0$. 
\end{proof}

\begin{lemma}
Let $f$ admit the $\gamma$-part $f_{\gamma}$ on 
$U$. 
Let $(a,l)=(a_1,\ldots,a_n,l)\in\Z_+^n\times\Z_+$
be a valid pair defining $\gamma$. 
Then we have the following:
\begin{enumerate}
\item $f_{\gamma}$ has the quasihomogeneous property:
\begin{equation}
f_{\gamma}(t^{a_1} x_1,\ldots, t^{a_n} x_n)=t^l f_{\gamma}(x) 
\mbox{\,\, for \,\, 
$t\in (0,1]$ and $x\in U$},
\label{eqn:5.7}
\end{equation}
\item
$f_{\gamma}$ can be uniquely extended to be a 
$C^{\infty}$ function defined on the set 
$
U\cup \left(\bigcup_{|r|<\delta} T_{V(\gamma)}^r(\R^n)\right)
$ with the property $(\ref{eqn:5.7})$, 
where $\delta$ is a positive number. 
This extended function is also denoted by $f_{\gamma}$. 
\end{enumerate}
\end{lemma}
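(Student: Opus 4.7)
For part (i), the plan is to substitute $\Phi_t(x):=(t^{a_1}x_1,\ldots,t^{a_n}x_n)$ directly into the explicit representation (\ref{eqn:5.4}) of Proposition~5.2. Two elementary observations drive the computation. First, by Lemma~4.4, $a_k=0$ for every $k\in V:=V(\gamma)$, so $T_W(\Phi_t(x))=T_W(x)$. Second, any $\alpha\in\gamma_V$ satisfies $\alpha_k=0$ for $k\in V$ (since $\gamma_V\subset T_V(\Z_+^n)$) and $\langle a,\alpha\rangle=l$ (since $\alpha$ is obtained from an element of $\gamma\subset H(a,l)$ by zeroing out its $V$-components, and $a$ vanishes on $V$). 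Consequently $\Phi_t(x)^\alpha=t^{\langle a,\alpha\rangle}x^\alpha=t^l x^\alpha$, and factoring $t^l$ outside the sum in (\ref{eqn:5.4}) gives (\ref{eqn:5.7}) immediately.

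For part (ii), I would use the very same formula (\ref{eqn:5.4}) to define the extension $\tilde f_\gamma$, now on the set $D:=\{x\in\R^n : T_W(x)\in U\}$. Each summand is smooth on $D$ because $T_W$ is linear, $\partial^\alpha f\in C^\infty(U)$, and $x^\alpha$ is polynomial; the finite sum is thus $C^\infty(D)$. Choosing $\delta>0$ small enough that the point with value $r$ in each $V$-slot and $0$ elsewhere lies in $U$ for $|r|<\delta$, one obtains $\bigcup_{|r|<\delta}T^r_{V}(\R^n)\subset D$, and $U\subset D$ is immediate, so $\tilde f_\gamma$ is defined on the claimed domain. The quasihomogeneity of $\tilde f_\gamma$ on $D$ then follows verbatim from the computation in part (i). For uniqueness, let $g$ be any other $C^\infty$ extension satisfying (\ref{eqn:5.7}). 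Given any $y$ in the extension domain, $a_k>0$ for $k\in W$ (Lemma~4.4) yields $\Phi_t(y)\to T_W(y)\in U$ as $t\to 0^+$, so some $t_0\in(0,1]$ satisfies $\Phi_{t_0}(y)\in U$. Both $g$ and $\tilde f_\gamma$ agree with $f_\gamma$ on $U$, hence $g(\Phi_{t_0}(y))=\tilde f_\gamma(\Phi_{t_0}(y))$; applying (\ref{eqn:5.7}) to both sides at $t=t_0$ and cancelling the common factor $t_0^l$ gives $g(y)=\tilde f_\gamma(y)$.

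The only slightly delicate step is this last one: the uniqueness argument requires every $y$ in the extension region to have its $(0,1]$-quasihomogeneous orbit enter $U$, which reduces to $T_W(y)\in U$. This is exactly the condition used when choosing $\delta$ in the existence construction, so the two parts fit together cleanly. Everything else is routine manipulation of the closed-form representation (\ref{eqn:5.4}), with no further input than the characterizations in Lemma~4.4 of the set $V(\gamma)$ via any valid pair defining $\gamma$.
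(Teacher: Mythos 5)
Your proof is correct and follows essentially the same route as the paper: substitute the quasihomogeneous scaling into the explicit formula (\ref{eqn:5.4}) for $f_\gamma$ and use the identity $T_W(t^{a_1}x_1,\ldots,t^{a_n}x_n)=T_W(x)$ (the paper's equation (\ref{eqn:5.5}), which you rederive from Lemma~4.4). The paper dismisses part (ii) as an easy consequence of (i); your existence-via-(\ref{eqn:5.4})-plus-orbit-uniqueness argument is a faithful elaboration of that step.
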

\begin{proof}
The expression (\ref{eqn:5.4}) in Proposition~5.2
and the equation (\ref{eqn:5.5})
give the above quasihomogeneous property in (i).
The assertion (ii) easily follows from (i). 
\end{proof}

\section{Properties of 
$\hat{{\mathcal E}}[P](U)$ and 
$\hat{{\mathcal E}}(U)$}

Throughout this section, 
every polyhedron $P\subset\R_+^n$ always satisfies that 
$P+\R_+^n\subset P$ if $P\neq\emptyset$.
Let $U$ be an open neighborhood of the origin in $\R^n$.

\subsection{Elementary properties}
From the definitions of $\hat{\mathcal E}[P](U)$ and 
$\hat{\mathcal E}(U)$, 
the following properties can be directly seen, 
so we omit the proofs. 
\begin{proposition}
The classes ${\mathcal E}[P](U)$, 
$\hat{\mathcal E}[P](U)$ and 
$\hat{\mathcal E}(U)$
 have the following properties:
\begin{enumerate}
\item
When $n=1$ and $P=[p,\infty)$ with $p\in\Z_+$, 
\begin{enumerate}
\item
${\mathcal E}[P](U)=\hat{\mathcal E}[P](U)
=\{x^{\alpha}\psi(x);\,\alpha-p\in\Z_+, \,\psi\in C^{\infty}(U)\},$
\item
$\hat{\mathcal E}(U)
=\{x^{\alpha}\psi(x); \,\alpha\in\Z_+, \,
\psi\in C^{\infty}(U) \mbox{ with } \psi(0)\neq 0\}$\\
\quad\quad$(=\{f\in C^{\infty}(U);\, \Gamma_+(f)\neq\emptyset\}.)$
\end{enumerate}
\item $\hat{{\mathcal E}}[\R_+^n](U)=
       {\mathcal E}[\R_+^n](U)=C^{\infty}(U).$
\item If $P_1,P_2\subset\R_+^n$ are polyhedra with $P_1\subset P_2$, 
then 
${\mathcal E}[P_1](U)\subset {\mathcal E}[P_2](U)$ and 
$\hat{{\mathcal E}}[P_1](U)\subset \hat{{\mathcal E}}[P_2](U)$. 
\item $C^{\omega}(U)\cap{\mathcal E}[P](U)\subset
       \hat{{\mathcal E}}[P](U)\subset{\mathcal E}[P](U).$
In particular, $C^{\omega}(U)\subset 
       \hat{{\mathcal E}}(U)\subset
       C^{\infty}(U).$
\item
${\mathcal E}[P](U)$ and  
$\hat{\mathcal E}[P](U)$ are ideals of $C^{\infty}(U)$.
\end{enumerate}
\end{proposition}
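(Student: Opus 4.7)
The plan is to handle the five parts in the order given, since each follows either directly from the definitions or from results established earlier in Section~5 (especially Proposition~5.2) and Remark~2.1. Only items (iii) and (v) require any argument beyond the unwinding of definitions; the only mild subtlety is a case distinction in (iii).

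For (i), note that in one dimension $P=[p,\infty)$ has exactly two nonempty faces: the full face $P$ (handled by Remark~2.1(iv)) and the compact vertex $\{p\}$ (handled by Proposition~5.2(i)); hence every $f \in {\mathcal E}[P](U)$ automatically admits all $\gamma$-parts, giving ${\mathcal E}[P](U)=\hat{\mathcal E}[P](U)$. The factorization $f(x) = x^{p}\psi(x)$ is the classical one-variable Taylor-with-remainder divisibility argument (applied when $\partial^{j}f(0)=0$ for $j<p$). Part (i)(b) follows by applying (i)(a) with $p$ equal to the exact vanishing order of $f$ at the origin. For (ii), every nontrivial face of $\R_+^n$ lies inside a coordinate hyperplane, so Remark~2.1(v) applies, while the full face is handled by Remark~2.1(iv); hence every smooth $f$ admits all $\gamma$-parts. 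Part (iv) is an immediate corollary of Proposition~5.2(ii), since a real analytic $f$ with $\Gamma_+(f)\subset P$ admits the $\gamma$-part on $U$ for every face $\gamma$ of $P$.

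For (iii), the inclusion ${\mathcal E}[P_1]\subset{\mathcal E}[P_2]$ is tautological. To show $\hat{\mathcal E}[P_1]\subset\hat{\mathcal E}[P_2]$, I fix $f \in \hat{\mathcal E}[P_1](U)$, a face $\gamma$ of $P_2$, and a valid pair $(a,l)\in\Z_+^n\times\Z_+$ defining $\gamma$, and I show the limit (\ref{eqn:2.2}) exists. Since $P_1\subset P_2\subset H^+(a,l)$, the pair $(a,l)$ is valid for $P_1$. Let $\gamma':=H(a,l)\cap P_1$. If $\gamma'\neq\emptyset$, it is a face of $P_1$ defined by $(a,l)$ and the limit exists by hypothesis. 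If $\gamma'=\emptyset$, set $l^*:=\min_{\alpha\in P_1}\langle a,\alpha\rangle$; then $l^*>l$, and $\gamma^\sharp:=\{\alpha\in P_1:\langle a,\alpha\rangle=l^*\}$ is a nonempty face of $P_1$ defined by the rescaled integer pair $(ka,kl^*)$ with $k\in\N$ chosen so that $kl^*\in\Z_+$. The admission of the $\gamma^\sharp$-part for $f$ with this rescaled pair, combined with the substitution $s=t^k$, yields
\begin{equation*}
\frac{f(t^{a_1}x_1,\ldots,t^{a_n}x_n)}{t^l}
= t^{l^*-l}\cdot\frac{f(t^{a_1}x_1,\ldots,t^{a_n}x_n)}{t^{l^*}}
\;\longrightarrow\; 0 \quad (t\to 0^+),
\end{equation*}
so $f$ admits the $\gamma$-part with $f_\gamma\equiv 0$.

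Finally, for (v), closure of ${\mathcal E}[P](U)$ under addition is immediate from the behaviour of Taylor series, and for $fg$ with $f\in{\mathcal E}[P](U)$, $g\in C^\infty(U)$, any index of a nonzero Taylor coefficient of $fg$ lies in $\Gamma_+(f)+\R_+^n\subset P+\R_+^n\subset P$. For $\hat{\mathcal E}[P](U)$, additivity follows from linearity of limits: $(f_1+f_2)_\gamma=(f_1)_\gamma+(f_2)_\gamma$. The essential new calculation is that, for every face $\gamma$ of $P$ with valid pair $(a,l)$,
\begin{equation*}
\frac{(fg)(t^{a_1}x_1,\ldots,t^{a_n}x_n)}{t^l}
= \frac{f(t^{a_1}x_1,\ldots,t^{a_n}x_n)}{t^l}\cdot g(t^{a_1}x_1,\ldots,t^{a_n}x_n)
\;\longrightarrow\; f_\gamma(x)\cdot g(T_{W(\gamma)}(x))
\end{equation*}
as $t\to 0^+$, the first factor converging by hypothesis and the second by continuity of $g$ together with $t^{a_k}x_k\to 0$ for $k\in W(\gamma)$ and $t^{a_k}x_k=x_k$ for $k\in V(\gamma)$. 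I expect the only genuine (and still mild) obstacle to be the case-split in (iii); the rescaling trick above dispatches it cleanly, and the rest of the argument is bookkeeping.
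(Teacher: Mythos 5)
The paper itself offers no proof of Proposition~6.1 --- it declares the assertions to ``be directly seen'' and omits them --- so there is no ``paper's approach'' to compare against. Your proof is a correct, complete filling-in of that gap, and the interesting content you have isolated is exactly right: parts (i), (ii), (iv) unwind to Remark~2.1~(iv),(v) and Proposition~5.2, and (v) is linearity of limits plus the observation that $g(t^{a_1}x_1,\ldots,t^{a_n}x_n)\to g(T_{W(\gamma)}(x))$. The genuinely nontrivial step is (iii) for $\hat{\mathcal E}[\cdot]$, which you handle correctly: given a valid pair $(a,l)$ for a face $\gamma$ of $P_2$, the same pair is valid for $P_1$, and when $H(a,l)\cap P_1=\emptyset$ you pass to $l^*=\min_{\alpha\in P_1}\langle a,\alpha\rangle > l$, clear denominators to get an integer valid pair $(ka,kl^*)$ defining a nonempty face of $P_1$, and conclude the limit is $0$ via the factor $t^{l^*-l}$.

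Two small points worth tightening. First, in (iii) you should note explicitly that $l^*\in\Q_+$, which is what licenses the choice of $k\in\N$ with $kl^*\in\Z_+$: this holds because $P_1$ is a rational polyhedron (with a vertex, since $P_1\subset\R_+^n$ contains no line), and $a\in\Z_+^n$, so the minimum of $\langle a,\cdot\rangle$ over $P_1$ is attained at a rational vertex. Second, the sentence ``combined with the substitution $s=t^k$'' has the substitution pointing the wrong way; what you actually use is that the existence of
\[
\lim_{t\to 0^+}\frac{f(t^{ka_1}x_1,\ldots,t^{ka_n}x_n)}{t^{kl^*}}
\]
yields, after putting $t=s^{1/k}$, the existence of
$\lim_{s\to 0^+} f(s^{a_1}x_1,\ldots,s^{a_n}x_n)/s^{l^*}$, and then multiplying by $s^{\,l^*-l}\to 0$ gives the claim. (An alternative, shorter route to (iii) is available once Proposition~6.3 is in hand --- the monomial-generated characterization makes the inclusion $\hat{\mathcal E}[P_1](U)\subset\hat{\mathcal E}[P_2](U)$ immediate --- but Proposition~6.3 appears after Proposition~6.1 in the paper, so your direct argument is the more appropriate one at this point in the development.)
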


\begin{remark}
Unfortunately, the class $\hat{\mathcal E}(U)$ is not closed 
in the following sense.
\begin{enumerate}
\item 
(Summation.) \,\,
Consider the following two-dimensional example: 
$f(x)=x_1+x_1^2+x_1e^{-1/x_2^2}$; 
$g(x)=-x_1$. 
It is easy to see that $f,g\in \hat{\mathcal E}(U)$, but
$f+g\not\in \hat{\mathcal E}(U)$.
\item 
(Change of coordinates.) \,\,
Consider the following two-dimensional example:
$f(x_1,x_2)=(x_1-x_2)^2+e^{-1/x_2^2}$. 
The diffeomorphism $x=\psi(y)$ defined around the origin 
is defined by
$x_1=y_1+y_2$ and $x_2=y_2$. 
It is easy to see that $f\in\hat{\mathcal E}(U)$,
but $f\circ \psi \not\in \hat{\mathcal E}(\psi^{-1}(U))$.
\end{enumerate}
\end{remark}
\subsection{Equivalent conditions}

The following is an important characterization of 
the class $\hat{{\mathcal E}}[P](U)$, 
which is considered as a generalization of 
the property (i-a) in Proposition~6.1.
Denote by ${\mathcal S}[P]$ the set of finite sets
in $P\cap \Z_+^n$. 
\begin{proposition} 
If $P$ is a nonempty polyhedron, then
the following conditions are equivalent.
\begin{enumerate}
\item 
$f$ belongs to the class 
$\hat{{\mathcal E}}[P](U)$;
\item 
There exist
$S \in{\mathcal S}[P]$
and $\psi_{p}\in C^{\infty}(U)$ for $p\in S$ 
such that
\begin{equation}
f(x)=\sum_{p\in S} x^{p}\psi_{p}(x).
\label{eqn:6.1}
\end{equation} 
\end{enumerate}
Note that the above expression is not unique. 
\end{proposition}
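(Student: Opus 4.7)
The direction (ii) $\Rightarrow$ (i) is a direct computation. Given $f(x)=\sum_{p\in S}x^p\psi_p(x)$ with $S\in\mathcal{S}[P]$, the inclusion $\Gamma_+(f)\subset P$ holds since each $x^p\psi_p$ has Newton polyhedron in $p+\R_+^n\subset P$. For a face $\gamma$ of $P$ with valid pair $(a,l)\in\Z_+^n\times\Z_+$,
\begin{equation*}
\frac{f(t^{a_1}x_1,\ldots,t^{a_n}x_n)}{t^l}=\sum_{p\in S}t^{\langle a,p\rangle-l}\,x^p\,\psi_p(t^{a_1}x_1,\ldots,t^{a_n}x_n),
\end{equation*}
and each $\langle a,p\rangle-l\geq 0$ because $P\subset H^+(a,l)$, with equality iff $p\in\gamma$; the limit as $t\to 0^+$ therefore exists and equals $\sum_{p\in S\cap\gamma}x^p\,\psi_p(T_{W(\gamma)}(x))\in C^\infty(U)$.

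For (i) $\Rightarrow$ (ii), my plan combines a standard Taylor expansion at the origin with iterated applications of Lemma~5.1 along the non-compact facets of $P$. By Lemma~4.1(iii), write $P=\bigcap_{j=1}^{M}H^+(b^j,m_j)$ with each $P\cap H(b^j,m_j)$ a facet. The inclusion $\Gamma_+(f)\subset P$ already guarantees that $c_\alpha=0$ whenever $\alpha\notin P$, so an ordinary Taylor expansion of $f$ to a sufficiently high degree $N$,
\begin{equation*}
f(x)=\sum_{|\alpha|<N}c_\alpha x^\alpha+\sum_{|\alpha|=N}R_\alpha(x)\,x^\alpha,
\end{equation*}
produces a polynomial piece whose monomials already lie in $P\cap\Z_+^n$; this accounts for the contribution of the compact facets of $P$. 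The remaining issue is the remainder term, whose exponents $\alpha$ with $|\alpha|=N$ need not all lie in $P$.

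To push these leftover ``corner'' exponents into $P$, I iterate Lemma~5.1 once for each non-compact facet $\tau_j$ (those with $V(\tau_j)\neq\emptyset$, by Lemma~4.3). For such $\tau_j$, writing $V=V(\tau_j)$, $W=W(\tau_j)$, the admission of the $\tau_j$-part, via the flatness argument at the end of the proof of Proposition~5.2(iii), forces
\begin{equation*}
(\partial^\alpha f)(T_W(x))=0\quad\text{for all }\alpha\in T_V(\Z_+^n)\text{ with }\langle b^j,\alpha\rangle<m_j.
\end{equation*}
Choosing $N_j$ with $H_V(b^j,m_j)\subset A_V(N_j)$ and $B_V(N_j)\subset H^+_V(b^j,m_j)$ (possible since $b^j_k\geq 1$ for $k\in W$, by Lemma~4.4), Lemma~5.1 then yields a decomposition whose surviving monomials lie in $H^+(b^j,m_j)\cap\Z_+^n$. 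Running this procedure for each non-compact facet in turn, applied to every residual remainder coefficient, and observing that each new exponent $\alpha+\beta$ stays in every previously imposed half-space (since $(b^j,m_j)\in\Z_+^n\times\Z_+$), yields the desired representation $f=\sum_{p\in S}x^p\psi_p(x)$ with $S\subset P\cap\Z_+^n$.

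The main obstacle is ensuring that the admission hypotheses needed at each stage of the iteration are in fact available: given an intermediate representation $f=\sum_\alpha x^\alpha\varphi_\alpha$, I need each coefficient $\varphi_\alpha$ to admit the $\tau$-part for the next facet in line. I plan to establish this transfer by writing each $\varphi_\alpha$ explicitly as a partial derivative of $f$ evaluated on a coordinate slice or as an integral Taylor remainder built from $f$, and then computing the defining limit for $\varphi_\alpha$ directly, using the quasihomogeneity in Lemma~5.4 and the uniqueness clause of Proposition~5.2(iii). If this transfer argument proves delicate, a cleaner fallback is induction on $n$: the base case $n=1$ is immediate from Proposition~6.1(i-a), and the inductive step extracts a suitable power of $x_n$ via the one-dimensional argument applied in the last coordinate and invokes the inductive hypothesis for the remaining $n-1$ variables.
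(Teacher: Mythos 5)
Your direction (ii) $\Rightarrow$ (i) is correct and essentially matches what the paper calls ``easy.'' For (i) $\Rightarrow$ (ii), you have the right key lemma (Lemma~5.1) and the right overall strategy (refine the representation one non-compact facet at a time, using the admission of face-parts to kill the offending coefficients), but the argument as written has a genuine gap, and you have also misidentified where the real work lies. You say the ``main obstacle'' is that the intermediate coefficients $\varphi_\alpha$ must themselves admit the $\tau$-part for the next facet, and you only ``plan'' to prove such a transfer. In fact no such admission is needed: Lemma~5.1 is just Taylor's formula with integral remainder and applies to any $C^\infty$ function, so you may apply it to each $\varphi_\alpha$ unconditionally. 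The actual content that must be proved --- and that your sketch never carries out --- is the vanishing step: after substituting the Lemma~5.1 expansions of the $\varphi_\alpha$ into $f=\sum_\alpha x^\alpha\varphi_\alpha$ and grouping by total monomial $\beta=p+\alpha$, one must show that the aggregate coefficient in front of each $x^\beta$ with $\langle b^j,\beta\rangle<m_j$ vanishes identically on $U$. This follows by substituting $x\mapsto(t^{b^j_1}x_1,\ldots,t^{b^j_n}x_n)$, using $T_{W}(t^{b^j}x)=T_{W}(x)$ so that the coefficients are $t$-independent, estimating the remainder piece as $O(t^{m_j+\varepsilon})$, and then using the hypothesis that $f$ (not $\varphi_\alpha$) admits the face-part to conclude that the resulting finite Laurent polynomial in $t$ has no negative powers. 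Without writing this out, the proof is incomplete.

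The paper organizes this more cleanly than your outline: it inducts on the number of half-spaces cutting out $P$, with $P=\R_+^n$ as the trivial base case, and at each step passes from a known representation with $S\subset P\cap\Z_+^n$ to one with $S\subset(P\cap H^+(a,l))\cap\Z_+^n$ via exactly the mechanism above. That induction makes the bookkeeping for your concern --- that each new exponent $p+\alpha$ stays in all previously imposed half-spaces --- automatic, since $\alpha\in\Z_+^n$ and $P+\R_+^n\subset P$. Your initial ordinary Taylor expansion to degree $N$ is an unnecessary detour (the ``polynomial piece plus remainder'' picture does not isolate the compact facets the way you suggest, and the polynomial piece is anyway already in $P$ simply because $\Gamma_+(f)\subset P$), and your fallback ``induction on $n$'' is too vague to assess: in the inductive step you would be extracting a power of $x_n$ from a function of $n$ variables, but you have not said what class the quotient lies in or why the inductive hypothesis applies to it. I would drop both the initial Taylor step and the fallback, restructure along the paper's half-space induction, and fill in the vanishing argument explicitly.
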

\begin{proof}
It suffices to show the assertion in the proposition
in the case of the polyhedron: $P\cap H^{+}(a,l)$
for any $(a,l)\in \Z_+^n\times \Z_+$ instead 
of $P$ under the assumption that the assertion is satisfied 
in the case of $P$.
Indeed, 
since every polyhedron is defined as an intersection of 
finitely many closed half spaces, 
an inductive argument gives the proof of the above proposition. 
Note that the case when $P=\R_+^n$ is obvious.

Since the implication (ii) $\Longrightarrow$ (i) 
is easy, 
we only show the 
implication (i) $\Longrightarrow$ (ii). 

Now, let us assume that $f(x)$ can be expressed 
as in (\ref{eqn:6.1}).
Let a pair 
$(a,l)=(a_1,\ldots,a_n,z)\in \Z_+^n\times \Z_+$ 
be fixed. 

Using Lemma~5.1 with $V=V(\gamma)$ and 
$W=W(\gamma)$, 
we have 
\begin{equation}
\psi_p(x)=
\sum_{\alpha\in A_V(N)}
C_{p\alpha}(T_{W(\gamma)}(x))x^{\alpha}
+
\sum_{\alpha\in B_V(N)}
R_{p\alpha}(x)x^{\alpha},
\label{eqn:6.2}
\end{equation}
where $C_{p\alpha}, R_{p\alpha}\in C^{\infty}(U)$ 
and $N\in\N$ satisfies the condition: 
$
H_V(a,l)\subset A_V(N).
$
Substituting (\ref{eqn:6.2}) into (\ref{eqn:6.1}), we have 
\begin{eqnarray*}
&&f(x)=\sum_{p\in S, \alpha\in A_V(N)}
C_{p\alpha}(T_{W(\gamma)}(x))x^{p+\alpha}
+
\sum_{p\in S,\alpha\in B_V(N)}
R_{p\alpha}(x)x^{p+\alpha} 
\label{eqn:}\\
&&\quad\quad =:f_1(x)+f_2(x).\nonumber
\end{eqnarray*} 
If $\alpha\in B_V(N)$, then the relationship 
$
H_V(a,l)\subset A_V(N) 
$
implies $p+\alpha\in H_V^+(a,l)$. 
Therefore, it suffices to show the following:
Under the assumption that the limit
in (\ref{eqn:2.2}) exists, 
the coefficients $C_{p\alpha}(T_{W(\gamma)}(x))$ 
in $f_1(x)$ 
vanish on $U$, if $p+\alpha\not\in H_V^+(a,l)$.

First, 
let us give an estimate for the function $f_2(x)$. 
A simple computation gives
$$
f_2(t^{a_1}y_1,\ldots,t^{a_n}y_n)
=\sum_{p\in S, \alpha\in B_N} 
C_{p\alpha}(t^{a_1}y_1,\ldots,t^{a_n}y_n)
t^{\langle a,p+\alpha\rangle }y^{p+\alpha}
$$
for $y\in U$ and $t\in (0,\delta)$.
The condition:  
$H_V(a,l)\subset A_V(N)$ implies that 
$\langle a,T_{V(\gamma)}(\alpha)\rangle \geq l+\epsilon$
with some positive number $\epsilon$ 
for $\alpha\in B_V(N)$. 
Thus 
$\langle a,p+\alpha\rangle \geq \langle a,p\rangle +l+\epsilon 
\geq l+\epsilon$ hold for $p\in S$ and $\alpha\in B_V(N)$.
From the above equation, 
there exist positive numbers $C$ and $\delta$ 
such that 
\begin{equation}
f_2(t^{a_1}y_1,\ldots,t^{a_n}y_n)
\leq C t^{l+\epsilon}
\label{eqn:6.3}
\end{equation}
for $y\in U$ and $t\in (0,\delta)$.

Next, let us consider the function $f_1(x)$.
Noticing (\ref{eqn:5.5}), 
we have
$$
f_1(t^{a_1}y_1,\ldots,t^{a_n}y_n)
=\sum_{p\in S, \alpha\in A_N} 
C_{p\alpha}(T_{W(\gamma)}(y))
t^{\langle a,p+\alpha\rangle }y^{p+\alpha}.
$$
By using the estimate (\ref{eqn:6.3}), 
the condition: $H_V(a,l)\subset A_V(N)$ implies that
$C_{p\alpha}(T_{W(\gamma)}(y))$ must vanish on the set $U$ 
if $p+\alpha\not\in H^+(a,l)$. 
\end{proof}

Proposition~6.3 implies that 
the class $\hat{\mathcal E}[P](U)$ can be written 
in the form 
$$
\hat{\mathcal E}[P](U)=
\left\{\sum_{p\in S} x^p\psi_p(x);
S\in{\mathcal S}[P],\,\,
\psi_p\in C^{\infty}(U) \mbox{ for $p\in S$}
\right\}.
$$
Next, 
let us consider an analogous problem
in the case of $\hat{\mathcal E}(U)$. 
It seems difficult to express this class in such a simple form. 
For a polyhedron $P\subset\R_+^n$,
denote by ${\mathcal V}(P)$ the
set of vertices of $P$.

\begin{lemma}
If $f$ belongs to the class $\hat{\mathcal E}(U)$,
then $f$ is expressed as   
$f(x)=\sum_{p\in S} x^p\psi_p(x)$, where 
$S\in {\mathcal S}[\Gamma_+(f)]$ and 
$\psi_p\in C^{\infty}(U)$. 
Moreover,
$S$ contains 
${\mathcal V}(\Gamma_+(f))$
and 
$\psi_p(0)\neq 0$ if $p\in{\mathcal V}(\Gamma_+(f))$. 
\end{lemma}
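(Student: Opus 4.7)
The plan is to deduce the basic decomposition from Proposition~6.3 applied with $P=\Gamma_+(f)$, and then upgrade it using the special geometry of vertices. Since $f\in\hat{\mathcal E}(U)$ means $f\in\hat{\mathcal E}[\Gamma_+(f)](U)$ by definition, Proposition~6.3 immediately supplies some $S\in\mathcal S[\Gamma_+(f)]$ and $\psi_p\in C^{\infty}(U)$ with $f(x)=\sum_{p\in S}x^p\psi_p(x)$. All of the remaining content of the lemma lies in the ``moreover'' clause about vertices.

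The main step is a simple convex-geometric fact that I would establish first: for any vertex $v\in\mathcal V(\Gamma_+(f))$ and any $p\in\Gamma_+(f)\cap\Z_+^n$, the componentwise inequality $p\le v$ forces $p=v$. Indeed, if $p\le v$ and $p\ne v$, then using $\Gamma_+(f)+\R_+^n\subset\Gamma_+(f)$ (guaranteed by Lemma~4.1) both $p$ and $2v-p=p+2(v-p)$ lie in $\Gamma_+(f)$, and $v=\tfrac12 p+\tfrac12(2v-p)$ would realize $v$ as an interior point of a nondegenerate segment in $\Gamma_+(f)$, contradicting extremality of $v$. A very similar midpoint argument shows that $c_v\ne0$ for every vertex $v$: since $v$ is an extreme point of the convex hull $\mathrm{conv}\bigl(\bigcup_{c_\alpha\ne0}(\alpha+\R_+^n)\bigr)$, it must lie in the generating set, hence $v=\alpha+\beta$ with $c_\alpha\ne0$ and $\beta\in\R_+^n$; applying the midpoint argument to the segment joining $\alpha$ and $\alpha+2\beta$ in $\Gamma_+(f)$ forces $\beta=0$, so $v=\alpha$ and $c_v=c_\alpha\ne0$.

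With these two facts in hand, I would conclude by comparing the Taylor coefficient of $x^v$ on the two sides of $f(x)=\sum_{p\in S}x^p\psi_p(x)$. Only terms with $p\le v$ contribute to the coefficient of $x^v$ on the right, and the convex-geometric lemma restricts these to the single possible index $p=v$. Hence the $x^v$-coefficient equals $\psi_v(0)$ if $v\in S$ and equals $0$ otherwise. Since this must match the actual Taylor coefficient $c_v\ne0$ of $f$, we conclude $v\in S$ and $\psi_v(0)=c_v\ne0$, which is precisely the claim.

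The argument is essentially routine once the two convex-geometric observations are in place. The only mildly delicate point is the nonvanishing $c_v\ne0$ at vertices, which requires unpacking the definition of $\Gamma_+(f)$ as the convex hull of a shifted support set and invoking the Krein--Milman-type fact that extreme points of such a convex hull must lie in the generating family; I do not anticipate any deeper obstacle beyond being careful with this reduction.
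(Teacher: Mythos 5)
Your proof is correct and follows essentially the same underlying idea as the paper's, but you work it out in considerably more detail. The paper's proof of the ``moreover'' clause is a one-line contradiction argument: if $S$ misses a vertex or $\psi_p(0)=0$ at a vertex $p$, then (the paper asserts) $\Gamma_+\bigl(\sum_{p\in S}x^p\psi_p\bigr)\subsetneq\Gamma_+(f)$. As written this is slightly loose, since the two sides are literally the same polyhedron; what is meant is that the form of the right-hand side would then be incompatible with $\Gamma_+(f)$ having the vertex $v$. Your proof unpacks precisely this: you isolate (and prove) the two convex-geometric facts that make the contradiction work — that a lattice point $p\in\Gamma_+(f)$ with $p\le v$ must equal $v$, and that $c_v\ne 0$ at every vertex — and then replace the Newton-polyhedron comparison with a direct comparison of the $x^v$ Taylor coefficient on both sides of the decomposition. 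This is a valid, more elementary realization of the same extremality argument, and it has the merit of making explicit the step (nonvanishing of $c_v$ at vertices) that the paper's version leaves implicit.
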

\begin{proof}
The expression is directly obtained from the Proposition~6.3
with the definition of $\hat{\mathcal E}(U)$. 
If $S$ does not contain some vertices
of $\Gamma_+(f)$ or $\psi_p(0)=0$ 
for some vertex $p$, then 
$\Gamma_+(\sum_{p\in S} x^p\psi_p(x))
\subsetneq\Gamma_+(f)$, which is a contradiction. 
\end{proof}
The following lemma is a converse of the above lemma.
\begin{lemma}
Let $P$ be a nonempty polyhedron.
If $f$ belongs to the class $\hat{\mathcal E}[P](U)$, 
which is expressed as in $(\ref{eqn:6.1})$, 
where $S$ contains ${\mathcal V}(P)$ and 
$\psi_p(0)\neq 0$ for $p\in {\mathcal V}(P)$,  
then $f$ belongs to the class $\hat{\mathcal E}(U)$.
\end{lemma}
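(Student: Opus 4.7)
The plan is to establish the equality $\Gamma_+(f) = P$, from which $f \in \hat{\mathcal E}[\Gamma_+(f)](U) = \hat{\mathcal E}(U)$ follows immediately. The inclusion $\Gamma_+(f) \subset P$ is built into the definitions, since $\hat{\mathcal E}[P](U) \subset \mathcal E[P](U)$. The substantive work is the reverse inclusion $P \subset \Gamma_+(f)$.

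First, I would argue $\mathcal V(P) \neq \emptyset$: because $P \subset \R_+^n$, the recession cone $C(P)$ of $P$ lies in $\R_+^n$, so its lineality space $C(P)\cap(-C(P))$ is trivial, and a nonempty pointed polyhedron has a vertex. Next, for each $p \in \mathcal V(P)$, I would check that the Taylor coefficient $c_p$ of $f$ at $x^p$ equals $\psi_p(0)$. Expanding each $\psi_q$ in (6.1) as a Taylor series at the origin gives
\begin{equation*}
c_p = \sum_{q \in S,\, q \leq p} \frac{1}{(p-q)!}(\partial^{p-q}\psi_q)(0),
\end{equation*}
and the key observation is that the only $q \in P$ (and hence the only $q \in S$) with $q \leq p$ componentwise is $q = p$. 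Indeed, if $q \in P$ with $q \leq p$ and $q \neq p$, set $\alpha := p - q \in \R_+^n \setminus \{0\}$; since $P + \R_+^n \subset P$ we have $p + \alpha \in P$, and $p = \tfrac{1}{2}\bigl(q + (p + \alpha)\bigr)$ exhibits $p$ as the midpoint of two distinct points of $P$, contradicting that $p$ is an extreme point. Hence $c_p = \psi_p(0) \neq 0$, so $p \in \Gamma_+(f)$, and $\mathcal V(P) \subset \Gamma_+(f)$.

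To finish, I would invoke the Minkowski--Weyl representation: any polyhedron equals the Minkowski sum of the convex hull of its vertices and its recession cone. In our setting $P + \R_+^n \subset P$ forces $\R_+^n \subset C(P)$ and $P \subset \R_+^n$ forces $C(P) \subset \R_+^n$, so $C(P) = \R_+^n$ and $P = \mathrm{conv}(\mathcal V(P)) + \R_+^n$. Since $\Gamma_+(f)$ is convex, contains $\mathcal V(P)$, and is stable under $+\R_+^n$, it contains $P$, completing the proof that $\Gamma_+(f) = P$. Because $f \in \hat{\mathcal E}[P](U)$ admits the $\gamma$-part on $U$ for every face $\gamma$ of $P = \Gamma_+(f)$, the conclusion $f \in \hat{\mathcal E}(U)$ follows.

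The main obstacle is the midpoint/extreme-point step: it is the only place where being a \emph{vertex} of $P$, rather than a general lattice point, is used nontrivially, and it is what prevents the other summands $x^q\psi_q(x)$ from contributing to the coefficient of $x^p$ and possibly cancelling $\psi_p(0)$.
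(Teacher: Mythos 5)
Your proof is correct and takes the same route as the paper: the paper's own proof is the single line ``The assumption implies $P=\Gamma_+(f)$, which means $f\in\hat{\mathcal E}(U)$,'' so you have simply supplied the details (the extreme-point argument that forces $c_p=\psi_p(0)\neq 0$ at each vertex $p$, plus the Minkowski--Weyl decomposition $P=\mathrm{conv}(\mathcal V(P))+\R_+^n$) that the authors left unstated.
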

\begin{proof}
The assumption implies $P=\Gamma_+(f)$, which 
means $f\in\hat{\mathcal E}(U)$. 
\end{proof}
For a polyhedron $P\subset\R_+^n$,
denote by $\check{\mathcal E}[P](U)$
the set of $f\in\hat{\mathcal E}[P](U)$ 
which is expressed as 
$f(x)=\sum_{p\in S}x^p\psi_p(x)$,
where $S\in{\mathcal S}[P]$
satisfies ${\mathcal V}(P)\subset S$ and
$\psi_p\in C^{\infty}(U)$ satisfies
that $\psi_p(0)\neq 0$ if $p\in {\mathcal V}(P)$.
Let $\tilde{\mathcal E}(U)$ be the subset in $C^{\infty}(U)$ 
defined by 
\begin{eqnarray*}
&&\tilde{\mathcal E}(U):=
\left\{
\sum_{p\in S}x^p\psi_{p}(x);
S\in{\mathcal S}[\R_+^n],\, 
\psi_{p}\in C^{\infty}(U)
\mbox{ with $\psi_{p}(0)\neq 0$ for $p\in S$}
\right\}.
\end{eqnarray*}
In order to understand the structure of
the class $\hat{\mathcal E}(U)$, 
we express or compare this class
by using the relatively simple classes
$\check{\mathcal E}[P](U)$ and 
$\tilde{\mathcal E}(U)$.

\begin{proposition}
\begin{enumerate}
\item $\hat{\mathcal E}(U)=
\bigcup_P\check{\mathcal E}[P](U),$
where the union is with respect to
all nonempty polyhedra $P$ in $\R_+^n$.
\item
$\tilde{\mathcal E}(U)
\subset \hat{\mathcal E}(U)$. 
More precisely, 
\begin{enumerate}
\item When $n=1$ or $2$,  
$\tilde{\mathcal E}(U)=\hat{\mathcal E}(U);$ 
\item When $n\geq 3$,
$\tilde{\mathcal E}(U) 
\subsetneq\hat{\mathcal E}(U)$. 
\end{enumerate}
\end{enumerate}
\end{proposition}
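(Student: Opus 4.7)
\smallskip\noindent\textbf{Plan.} Part~(i) packages Lemmas~6.4 and~6.5 in both directions. For the inclusion ``$\subset$'', given $f\in\hat{\mathcal E}(U)$, Lemma~6.4 supplies an expansion $f=\sum_{p\in S}x^p\psi_p$ with $S\supset{\mathcal V}(\Gamma_+(f))$ and $\psi_v(0)\neq 0$ for every $v\in{\mathcal V}(\Gamma_+(f))$; this exhibits $f\in\check{\mathcal E}[\Gamma_+(f)](U)$, which lies in the union on the right. Conversely, any $f\in\check{\mathcal E}[P](U)$ satisfies the hypotheses of Lemma~6.5 verbatim, so $f\in\hat{\mathcal E}(U)$.

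For the inclusion $\tilde{\mathcal E}(U)\subset\hat{\mathcal E}(U)$ in part~(ii), take $f=\sum_{p\in S}x^p\psi_p\in\tilde{\mathcal E}(U)$ and set $P:=\mathrm{conv}(S)+\R_+^n$. This polyhedron satisfies $P+\R_+^n\subset P$, and Proposition~6.3 places $f$ in $\hat{\mathcal E}[P](U)$. Each vertex $v\in{\mathcal V}(P)$ is an extremal element of $S$, since no other $q\in S$ can lie coordinatewise below $v$; thus ${\mathcal V}(P)\subset S$, and the $\tilde{\mathcal E}$-hypothesis forces $\psi_v(0)\neq 0$ at every such $v$. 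Hence $f\in\check{\mathcal E}[P](U)\subset\hat{\mathcal E}(U)$ by part~(i). The case $n=1$ of part~(ii)(a) is then immediate from Proposition~6.1(i-b), which identifies $\hat{\mathcal E}(U)$ with $\tilde{\mathcal E}(U)$ in one variable.

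For the case $n=2$ of part~(ii)(a), I would prove $\hat{\mathcal E}(U)\subset\tilde{\mathcal E}(U)$ by finite recursion. Writing $f(x_1,x_2)=f(x_1,0)+f(0,x_2)-f(0,0)+x_1x_2\,h(x_1,x_2)$ with $h$ smooth, the one-variable restrictions $f(\cdot,0)$ and $f(0,\cdot)$ belong to the $n=1$ version of $\hat{\mathcal E}$---otherwise a non-compact coordinate-edge $\gamma$-part of $f$ would fail to converge---and so factor as monomial-times-unit by the already-settled case (or vanish identically). One then applies the same identity to $h$. The Newton polyhedron $\Gamma_+(h)$ is obtained from $\Gamma_+(f)$ by restricting to $\{\alpha_1,\alpha_2\geq 1\}$ and translating by $(-1,-1)$, so the integer quantity $\min\{\alpha_1+\alpha_2:\alpha\in\Gamma_+\}$ strictly decreases at each recursion step and the procedure terminates after finitely many iterations. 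The technical heart of the argument is verifying that $h$ inherits membership in $\hat{\mathcal E}$ at every stage, which uses the planar geometry: in two variables every non-compact face of $\Gamma_+(f)$ is one-dimensional and parallel to a coordinate axis, so the $n=1$ base case controls every admissibility check that arises.

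For part~(ii)(b), I would exhibit an explicit counterexample in dimension three, the statement in $n>3$ then following by adjoining dummy variables. The strategy is to take $f=p+\eta$, where $p$ is a polynomial chosen so that $\Gamma_+(f)=\Gamma_+(p)$ has a noncompact 2-dimensional facet $\gamma\subset\{x_i=c\}$ (with two free coordinates in $V(\gamma)$), and $\eta$ is a carefully engineered flat function: flat enough that every $\gamma$-part exists (so $f\in\hat{\mathcal E}(U)$), yet whose restriction to the coordinate hyperplane carrying the facet is nonzero and of a genuine two-variable shape. The obstruction to $f\in\tilde{\mathcal E}(U)$ is then read off by restricting a hypothetical expansion $f=\sum_{p\in S}x^p\psi_p$ to that hyperplane: only $p\in S$ lying on the hyperplane contribute, yet the constraints $S\subset\Gamma_+(f)\cap\Z_+^3$ and $\psi_p(0)\neq 0$ leave too few monomials to reproduce the restricted $\eta$, because a genuine two-variable flat profile cannot be a finite sum of single monomials times units. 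The feature unavailable in dimensions $\leq 2$ is precisely that a noncompact facet can be two-dimensional, carrying functional data richer than any sum of one-variable units. Constructing $\eta$ simultaneously compatible with every scaling $(a,l)$ (to keep $f$ in $\hat{\mathcal E}$) and rigid enough to block every possible $\tilde{\mathcal E}$-expansion is the main obstacle of the proof.
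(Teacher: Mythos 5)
Parts (i), the one-dimensional case of (ii)(a), and the inclusion $\tilde{\mathcal E}(U)\subset\hat{\mathcal E}(U)$ match the paper's argument (Lemmas~6.4, 6.5, Proposition~6.1(i-b)). The two remaining pieces have genuine problems.

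\textbf{The $n=2$ recursion does not work as stated.} Your inductive step relies on ``verifying that $h$ inherits membership in $\hat{\mathcal E}$ at every stage,'' but this claim is false. Take
\[
f(x_1,x_2)=x_1^2+x_2^2+x_1x_2\,e^{-1/x_2^2}.
\]
Here $\Gamma_+(f)=\{\alpha\in\R_+^2:\alpha_1+\alpha_2\geq 2\}$ with vertices $(2,0)$ and $(0,2)$, and one checks directly that $f$ admits the $\gamma$-part for both noncompact facets (they are $f(x_1,0)=x_1^2$ and $f(0,x_2)=x_2^2$), so $f\in\hat{\mathcal E}(U)$. But your $h$ is $e^{-1/x_2^2}$, which is flat and nonzero, hence $\Gamma_+(h)=\emptyset$ and $h\notin\hat{\mathcal E}(U)$. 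The recursion produces nothing for the remainder term $x_1x_2h$, and yet $f\in\tilde{\mathcal E}(U)$: one writes $x_1x_2e^{-1/x_2^2}=x_2^2\cdot\big(x_1x_2^{-1}e^{-1/x_2^2}\big)$, using flatness to shift powers onto a vertex. This ``flat remainder absorbed by a vertex monomial'' step is not in your outline, and it is precisely where the work lies. The paper's argument avoids the recursion entirely: starting from $f=\sum_{p\in S}x^p\psi_p$ with $\mathcal V(\Gamma_+(f))\subset S$, it Taylor-expands each $\psi_p$ for $p\notin\mathcal V(\Gamma_+(f))$ and uses the fact that in two dimensions $(\Gamma_+(f)\cap\Z_+^2)\setminus\bigcup_{v\in\mathcal V}(v+\R_+^2)$ is a \emph{finite} set, which lets one rearrange $f$ as a polynomial plus $\sum_{v\in\mathcal V}x^v\tilde\psi_v$ with $\tilde\psi_v(0)\neq 0$. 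You would do better to adopt that rearrangement directly rather than attempt the unstable recursion.

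\textbf{Part (ii)(b) is a strategy, not a proof.} You correctly identify the feature that fails in dimension $\geq 3$ (a noncompact facet can be two-dimensional, carrying genuinely two-variable data), but you never produce the example or verify either required property. The paper gives $g(x)=x_1^2+\cdots+x_{n-1}^2+x_1x_2\,e^{-1/x_n^2}$, for which $g\in\hat{\mathcal E}(U)$ follows from a short $\gamma$-part computation, and $g\notin\tilde{\mathcal E}(U)$ is read off by examining $\partial_1\partial_2 g$ along the $x_n$-axis. Without exhibiting and checking a concrete $\eta$, your last paragraph is an unfinished sketch rather than an argument; and since you flag the construction as ``the main obstacle of the proof,'' this is not a detail that can be waved through.
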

\begin{proof}
The equation in (i) and the inclusion
$\tilde{\mathcal E}(U) \subset \hat{\mathcal E}(U)$ in (ii)
easily follow from Lemmas~6.4 and 6.5. 
Let us show the properties (a), (b) in (ii). 

(a) \quad
The case when $n=1$ easily follows from the Proposition~6.1 (i-b). 

Consider the case when $n=2$.
Let $f$ belong to the class $\hat{\mathcal E}(U)$, 
which is expressed as in Lemma~6.4.
When $p\in S\setminus {\mathcal V}(\Gamma_+(f))$, 
Taylor's formula
implies that the term $x^p\psi_p(x)$ 
can be written in the form: 
$x^p\psi_p(x)=$ a polynomial   
$
+\sum_{\alpha\in{\mathcal V}(\Gamma_+(f))} 
x^{\alpha}\psi_{p\alpha}(x),
$
where $\psi_{p\alpha}\in C^{\infty}(U)$ with 
$\psi_{p\alpha}(0)=0$. 
Notice that 
$
(\Gamma_+(f)\cap\Z_+^n)\setminus 
\bigcup_{p\in{\mathcal V}(\Gamma_+(f))}(p+\R_+^n)
$
is a finite set in the two-dimensional case. 
By substituting the above into the expression in Lemma~6.4, 
$f$ can be written in the form:
$f(x)=$ a polynomial 
$+\sum_{p\in{\mathcal V}(\Gamma_+(f))} x^p\tilde{\psi}_p(x)$,
where $\tilde{\psi}_p\in C^{\infty}(U)$ with
$\tilde{\psi}_p(0)\neq 0$.
This means that $f$ belongs to the class $\tilde{\mathcal E}(U)$.

(b) \quad 
When $n\geq 3$, consider the example:
$g(x)=x_1^2+\cdots+x_{n-1}^2+x_1x_2 e^{-1/x_n^2}.$
It is easy to see that $g\in \hat{\mathcal E}(U)$, but
$g\not\in\tilde{\mathcal E}(U)$.
\end{proof}

Using Proposition~6.3, we give another expression 
of $f_{\gamma}$. 
Compare to (\ref{eqn:5.4}) in Proposition~5.2.

\begin{lemma} 
Let $f$ belong to $\hat{\mathcal E}(U)$,
which is expressed as $(\ref{eqn:6.1})$ in 
Proposition~6.3, and 
$\gamma$ be a nonempty face of $P$. 
Then $f_{\gamma}$ can be expressed as 
\begin{equation}
f_{\gamma}(x)
=\sum_{p\in\gamma\cap S} x^p\psi_p(T_{W(\gamma)}(x))
\quad\mbox{ for $x\in U$.} 
\label{eqn:6.4}
\end{equation}
\end{lemma}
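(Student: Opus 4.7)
My plan is to evaluate the limit (\ref{eqn:2.2}) defining $f_{\gamma}$ directly from the representation (\ref{eqn:6.1}), exploiting the finiteness of $S$ and the continuity of each $\psi_p$. Fix any valid pair $(a,l)\in\Z_+^n\times\Z_+$ defining $\gamma$; by Proposition~5.2~(iii) the resulting limit does not depend on this choice. Substituting the formula $f(x)=\sum_{p\in S}x^p\psi_p(x)$ into the quasi-scaled argument yields, for every $x\in U$ and $t\in(0,1]$,
\begin{equation*}
\frac{f(t^{a_1}x_1,\ldots,t^{a_n}x_n)}{t^l}
=\sum_{p\in S} t^{\langle a,p\rangle -l}\, x^p\,\psi_p(t^{a_1}x_1,\ldots,t^{a_n}x_n).
\end{equation*}

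Next, I would analyze the two factors in each summand as $t\to 0^+$. Since $p\in S\subset P\subset H^+(a,l)$, one has $\langle a,p\rangle \geq l$, with equality exactly when $p\in\gamma$; hence $t^{\langle a,p\rangle -l}$ equals $1$ when $p\in\gamma\cap S$ and tends to $0$ when $p\in S\setminus\gamma$. For the $\psi_p$ factor, Lemma~4.4 identifies $V(\gamma)=\{k;a_k=0\}$ and $W(\gamma)=\{k;a_k\neq 0\}$, so the point $(t^{a_1}x_1,\ldots,t^{a_n}x_n)$ converges to $T_{W(\gamma)}(x)$ as $t\to 0^+$; continuity of $\psi_p\in C^{\infty}(U)$ then gives $\psi_p(t^{a_1}x_1,\ldots,t^{a_n}x_n)\to \psi_p(T_{W(\gamma)}(x))$.

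Because $S$ is finite, the limit can be taken termwise, and combining the two observations above yields exactly the desired formula (\ref{eqn:6.4}). There is no real obstacle: the finiteness of $S$ trivializes the interchange of limit and sum, Lemma~4.4 supplies the crucial match between the scaling directions $W(\gamma)$ and the projection $T_{W(\gamma)}$, and the strict inequality $\langle a,p\rangle >l$ off $\gamma$ kills the remaining terms. The only mild technicality is ensuring that $T_{W(\gamma)}(x)\in U$ (so the right-hand side is defined) and that the scaled points lie in $U$ for small $t$; this holds after possibly shrinking $U$ to a neighborhood of the origin invariant under the quasi-homogeneous contractions $x\mapsto (t^{a_1}x_1,\ldots,t^{a_n}x_n)$ for $t\in(0,1]$, which may be assumed without loss of generality.
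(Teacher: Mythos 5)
Your proof is correct and follows essentially the same route as the paper's: substitute the finite representation $f=\sum_{p\in S}x^p\psi_p(x)$ from Proposition~6.3 into the scaled quotient, note via Lemma~4.4 that $a_k>0$ exactly for $k\in W(\gamma)$ so the scaled argument tends to $T_{W(\gamma)}(x)$, and let $t\to 0$ termwise using $\langle a,p\rangle \geq l$ with equality iff $p\in\gamma$. The paper's own proof is this same computation stated more compactly, so there is nothing to flag.
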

\begin{proof}
Let $(a,l)=(a_1,\ldots,a_n,l)\in\Z_+^n\times\Z_+$ be 
a valid pair defining 
$\gamma$. 
Noticing that $a_k>0$ if and only if $k\in W(\gamma)$, 
Proposition~6.3 gives 
\begin{equation*}
\begin{split}
&f_{\gamma}(x)=\lim_{t\to 0} 
\frac{f(t^{a_1} x_1,\ldots,t^{a_n} x_n)}{t^l}\\
&\quad=
\lim_{t\to 0} \sum_{p\in S} t^{\langle a,p\rangle -l} 
x^p\psi_p(t^{a_1} x_1,\ldots,t^{a_n} x_n)
=\sum_{p\in\gamma\cap S} x^p\psi_p(T_{W(\gamma)}(x)).
\end{split}
\end{equation*}
\end{proof}
When $f\in\hat{\mathcal E}[P](U)$, 
a slightly stronger result for 
the quasihomogeneous property of $f_{\gamma}$ is 
obtained. Compare to Lemma~5.4 (i).
\begin{lemma}
If $f\in\hat{\mathcal E}[P](U)$, 
then the identity $(\ref{eqn:5.7})$ holds 
for all valid pairs $(a,l)$ satisfying
$\gamma\subset H(a,l)$.
\end{lemma}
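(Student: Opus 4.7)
The plan is to use the explicit representation of $f_{\gamma}$ coming from Lemma~6.7 together with the combinatorial fact from Section~4 that $V(\gamma)\subset V(\gamma')$ whenever $\gamma\subset\gamma'$ are faces of $P$. Concretely, since $f\in\hat{\mathcal E}[P](U)$, Proposition~6.3 lets me write $f(x)=\sum_{p\in S}x^p\psi_p(x)$ for some finite set $S\subset P\cap\Z_+^n$, and then Lemma~6.7 (whose proof relies only on this representation and on $\gamma$ being a nonempty face of $P$) yields
\begin{equation*}
f_{\gamma}(x)=\sum_{p\in\gamma\cap S}x^{p}\psi_p(T_{W(\gamma)}(x))\qquad\text{for }x\in U.
\end{equation*}

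Now fix a valid pair $(a,l)\in\Z_+^n\times\Z_+$ with $\gamma\subset H(a,l)$, and set $\gamma':=P\cap H(a,l)$. Then $\gamma'$ is a face of $P$ containing $\gamma$, and $(a,l)$ is a valid pair \emph{defining} $\gamma'$. The key step is to observe that $V(\gamma)\subset V(\gamma')$: if $k\in V(\gamma)$, then by Lemma~4.2 there exists $\alpha\in\gamma$ with $\alpha+\R_+e_k\subset\gamma$, whence $\alpha+\R_+e_k\subset\gamma'$, and Lemma~4.2 applied to $\gamma'$ gives $k\in V(\gamma')$. Combining this inclusion with Lemma~4.4 applied to $\gamma'$, I conclude that $a_k=0$ for every $k\in V(\gamma)$.

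It then remains to substitute $(t^{a_1}x_1,\ldots,t^{a_n}x_n)$ into the representation of $f_{\gamma}$. For $j\in V(\gamma)$ we have $a_j=0$, so $t^{a_j}x_j=x_j$, and hence $T_{W(\gamma)}(t^{a_1}x_1,\ldots,t^{a_n}x_n)=T_{W(\gamma)}(x)$. On the other hand, for $p\in\gamma\cap S\subset\gamma\subset H(a,l)$, we have $\langle a,p\rangle=l$. Therefore
\begin{equation*}
f_{\gamma}(t^{a_1}x_1,\ldots,t^{a_n}x_n)
=\sum_{p\in\gamma\cap S}t^{\langle a,p\rangle}x^p\psi_p(T_{W(\gamma)}(x))
=t^l f_{\gamma}(x)
\end{equation*}
for $t\in(0,1]$ and $x\in U$, which is precisely the identity (5.7).

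I do not expect any serious obstacle: every ingredient (Proposition~6.3, Lemma~6.7, and the face-theoretic Lemmas~4.2 and 4.4) is already available. The only real content, absent from the proof of Lemma~5.4(i), is the monotonicity $V(\gamma)\subset V(\gamma')$ for $\gamma\subset\gamma'$, which is needed precisely because the pair $(a,l)$ is no longer assumed to define $\gamma$ itself but merely a possibly larger face containing it.
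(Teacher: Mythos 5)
The paper states Lemma~6.8 without giving a proof, so there is no ``paper's own proof'' to compare against; the question is simply whether your argument is correct, and it is.

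You correctly identify the one genuinely new ingredient beyond Lemma~5.4~(i): when $(a,l)$ merely satisfies $\gamma\subset H(a,l)$ rather than defining $\gamma$, it is no longer automatic that $a_k=0$ for $k\in V(\gamma)$, and this is exactly what is needed to get $T_{W(\gamma)}(t^{a_1}x_1,\ldots,t^{a_n}x_n)=T_{W(\gamma)}(x)$. Your monotonicity observation $V(\gamma)\subset V(\gamma')$ for $\gamma\subset\gamma'=P\cap H(a,l)$, proved via the equivalence (i)$\Leftrightarrow$(ii) in Lemma~4.2, is exactly the right fact, and combined with Lemma~4.4 applied to $\gamma'$ it gives $a_k=0$ for $k\in V(\gamma)$. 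The rest is the straightforward substitution into the representation $f_\gamma(x)=\sum_{p\in\gamma\cap S}x^p\psi_p(T_{W(\gamma)}(x))$ from Lemma~6.7, using $\langle a,p\rangle=l$ for $p\in\gamma\cap S$. You are also right that although Lemma~6.7 is nominally stated for $f\in\hat{\mathcal E}(U)$, its proof uses only the representation of Proposition~6.3 and hence applies verbatim to $f\in\hat{\mathcal E}[P](U)$ with $\gamma$ a face of $P$; flagging that is sensible, since the lemma's hypothesis as printed looks like a typo. One small alternative worth noting: instead of invoking the monotonicity of $V(\cdot)$, you could argue directly that $k\in V(\gamma)$ gives some $\alpha\in\gamma$ with $\alpha+\R_+e_k\subset\gamma\subset H(a,l)$, so $\langle a,\alpha\rangle+r\,a_k=l$ for all $r\geq 0$, forcing $a_k=0$; this bypasses the introduction of $\gamma'$ and Lemma~4.4 and is marginally shorter, but your route is equally valid.
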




\subsection{Denjoy-Carleman classes}

Let us discuss the relationship between
the classes $\hat{{\mathcal E}}[P](U),\hat{{\mathcal E}}(U)$ 
and ${\mathcal E}_M(U)$. 
Here ${\mathcal E}_M(U)$ are
the Denjoy-Carleman quasianalytic classes, 
which are interesting classes in $C^{\infty}(U)$
and have been studied from various viewpoints.  
These classes contain all real analytic functions 
but are strictly larger, 
so they also contain functions with non-convergent 
Taylor expansions. 
We briefly explain the Denjoy-Carleman quasianalytic classes 
and their properties.  
Refer to the paper \cite{bm04} by Bierstone and Milman and
the expositive article \cite{thi08} by Thilliez 
for more detailed properties and recent studies about
these classes. 

Let $U$ be an open neighborhood of the origin in $\R^n$ and 
$M=\{M_0,M_1,M_2,\ldots\}$ an increasing sequence of positive 
real numbers, where $M_0=1$. 
Denote by ${\mathcal E}_M(U)$ 
the set consisting of all real-valued
$C^{\infty}$ functions satisfying that
for every compact set $K\subset U$, 
there exist positive constants $A,B$ such that 
\begin{equation}
\left|
\partial^{\alpha}f(x)
\right|\leq 
AB^{\langle \alpha\rangle } \alpha ! M_{\langle \alpha\rangle } \quad
\mbox{for any $x\in K$ and $\alpha\in\Z_+^n$.}
\label{eqn:6.5}
\end{equation}
The class ${\mathcal E}_M(U)$ is said to be {\it quasianalytic},
if all its elements satisfy the following:
\begin{equation}
\mbox{
If $\partial^{\alpha}f(0)=0$ for any $\alpha\in \Z_+^n$,
then $f\equiv 0$ on $U$.
}
\label{eqn:6.6}
\end{equation}
Of course, the set of real analytic functions is quasianalytic. 
We assume that $M=\{M_k\}_{k\in\Z_+}$ satisfies the condition:  
$M$ is logarithmically convex, i.e., 
\begin{equation}
\frac{M_{j+1}}{M_j}\leq\frac{M_{j+2}}{M_{j+1}} 
\quad \mbox{for all $j\in\Z_+$.}
\label{eqn:6.7}
\end{equation}
This condition implies that 
${\mathcal E}_M(U)$ is a ring and 
${\mathcal E}_M(U)$ contains the ring $C^{\omega}(U)$ of
real analytic functions on $U$.  
The Denjoy-Carleman theorem asserts that 
under the hypothesis (\ref{eqn:6.7}),
${\mathcal E}_M(U)$ is quasianalytic 
if and only if 
\begin{equation}
\sum_{j=0}^{\infty}\frac{M_j}{(j+1)M_{j+1}}=\infty.
\label{eqn:6.8}
\end{equation}
If $M$ satisfies the conditions (\ref{eqn:6.7}) and (\ref{eqn:6.8}),
then ${\mathcal E}_M(U)$ is called a 
{\it Denjoy-Carleman class}.

Now, let us show that our classes 
$\hat{{\mathcal E}}[P](U),\hat{{\mathcal E}}(U)$
contain Denjoy-Carleman classes.
For $f\in C^{\infty}(U)$ and 
$I\subset\{1,\ldots,n\}$, 
$f\circ T_I$ can 
be regarded as the $C^{\infty}$ function 
of $(n-\card I)$-variables defined on 
$U_{I}:=U\cap T_I(\R^n)$, 
which is denoted by $f_I$.
For a sequence of positive numbers $M=\{M_j\}_{j\in\Z_+}$
and a nonnegative integer $k$, 
$M^{+k}$ denotes the shifted sequence
$M^{+k}=\{M_{j+k}\}_{j\in\Z_+}$.

\begin{lemma}
If $f$ belongs to a Denjoy-Carleman class
${\mathcal E}_M(U)$, 
then 
\begin{enumerate}
\item 
$\partial^{\alpha}f$ belongs to 
${\mathcal E}_{M^{+\langle \alpha\rangle }}(U)$
for any $\alpha\in \Z_+^n$;
\item  
$f_I$ belongs to ${\mathcal E}_M(U_I)$
for any subset $I$ in $\{1,\ldots,n\}$. 
\end{enumerate}
\end{lemma}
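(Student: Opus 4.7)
The plan is to derive both (i) and (ii) directly from the defining estimate \eqref{eqn:6.5}, using only an elementary factorial bound for (i) and the fact that partial derivatives in the free variables commute with restriction to a coordinate flat for (ii).

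For part (i), fix $\alpha \in \Z_+^n$ and set $g = \partial^{\alpha} f$. Given any compact $K \subset U$, I would choose the constants $A,B$ furnished by \eqref{eqn:6.5} for $f$ on $K$. For an arbitrary $\beta \in \Z_+^n$ I have $\partial^{\beta} g = \partial^{\alpha+\beta} f$, and hence
\begin{equation*}
|\partial^{\beta} g(x)| \leq A\, B^{\langle\alpha\rangle+\langle\beta\rangle}\, (\alpha+\beta)!\, M_{\langle\alpha\rangle+\langle\beta\rangle}
\quad\text{for } x\in K.
\end{equation*}
The one remaining ingredient is the coordinatewise binomial bound $\binom{\alpha_i+\beta_i}{\alpha_i} \leq 2^{\alpha_i+\beta_i}$, which multiplies to give $(\alpha+\beta)! \leq 2^{\langle\alpha\rangle+\langle\beta\rangle}\,\alpha!\,\beta!$. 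Plugging this in and collecting the factors that depend only on $\alpha$ yields
\begin{equation*}
|\partial^{\beta} g(x)| \leq \bigl(A\,(2B)^{\langle\alpha\rangle}\alpha!\bigr)\, (2B)^{\langle\beta\rangle}\, \beta!\, M_{\langle\alpha\rangle+\langle\beta\rangle},
\end{equation*}
which is the estimate \eqref{eqn:6.5} for $g$ with sequence $M^{+\langle\alpha\rangle}$ and constants $A' = A\,(2B)^{\langle\alpha\rangle}\alpha!$, $B' = 2B$.

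For part (ii), let $J = \{1,\dots,n\}\setminus I$. For a multi-index $\beta$ on the variables $(x_j)_{j\in J}$, extend it to $\tilde\beta \in \Z_+^n$ by setting $\tilde\beta_i = 0$ for $i\in I$; note $\langle\tilde\beta\rangle = \langle\beta\rangle$ and $\tilde\beta! = \beta!$. Since the variables with index in $I$ are frozen at $0$ in the definition of $f_I = f\circ T_I$, interchanging differentiation and restriction gives $\partial^{\beta} f_I(x) = (\partial^{\tilde\beta} f)(T_I(x))$ for every $x \in U_I$. If $K\subset U_I$ is compact then $K$ is also a compact subset of $U$, and applying \eqref{eqn:6.5} for $f$ on $K$ to $\partial^{\tilde\beta}f$ yields
\begin{equation*}
|\partial^{\beta} f_I(x)| \leq A\, B^{\langle\beta\rangle}\, \beta!\, M_{\langle\beta\rangle} \quad\text{for } x\in K,
\end{equation*}
which is precisely the condition $f_I \in \mathcal{E}_M(U_I)$.

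Neither step presents a real obstacle; (i) reduces to the combinatorial inequality $(\alpha+\beta)! \leq 2^{\langle\alpha+\beta\rangle}\alpha!\beta!$, and (ii) rests on the trivial observation that differentiating in the surviving variables commutes with freezing the others. The only point worth mild care is the bookkeeping in (i) to verify that the extra factors from the factorial estimate can be absorbed into the constant prefactor, leaving the $\beta$-dependent growth of the correct form for the shifted sequence $M^{+\langle\alpha\rangle}$.
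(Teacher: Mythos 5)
Your proof is correct, and since the paper dismisses this lemma with a one‑word "Easy," your argument is exactly the direct verification the authors had in mind: part~(i) from $(\alpha+\beta)! \le 2^{\langle\alpha\rangle+\langle\beta\rangle}\alpha!\,\beta!$ applied to the defining bound \eqref{eqn:6.5}, and part~(ii) from the fact that differentiating in the free variables commutes with restriction to the coordinate flat $T_I(\R^n)$.
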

\begin{proof}
Easy.
\end{proof}
\begin{proposition}
If ${\mathcal E}_M(U)$ is a Denjoy-Carleman class,
then 
${\mathcal E}_M(U)\cap{\mathcal E}[P](U)$ is contained in 
$\hat{{\mathcal E}}[P](U)$ and, 
in particular, 
${\mathcal E}_M(U)$ is contained in 
$\hat{{\mathcal E}}(U)$.
\end{proposition}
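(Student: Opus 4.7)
The strategy is to adapt the argument for Proposition~5.2(ii), replacing real analyticity with Denjoy--Carleman quasianalyticity. Fix $f\in{\mathcal E}_M(U)\cap{\mathcal E}[P](U)$, a nonempty face $\gamma$ of $P$, and a valid pair $(a,l)$ defining $\gamma$; set $V=V(\gamma)$ and $W=W(\gamma)$. I would start from the expansion (\ref{eqn:5.6}) produced by Lemma~5.1, choosing $N\in\N$ large enough that $H_V(a,l)\subset A_V(N)$ and $\langle a,\alpha\rangle\geq l$ for every $\alpha\in B_V(N)$ (the latter is automatic once $N\geq l$, since $a_j\geq 1$ for $j\in W$). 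Dividing by $t^l$ and letting $t\to 0^+$, the terms from $B_V(N)$ and those $\alpha\in A_V(N)$ with $\langle a,\alpha\rangle>l$ contribute negligibly, so existence of the limit (\ref{eqn:2.2}) reduces to
\[
(\partial^{\alpha}f)(T_W(x))\equiv 0\text{ on }U\quad\text{for every }\alpha\in A_V(N)\text{ with }\langle a,\alpha\rangle<l,
\]
after which Proposition~5.2(iii) identifies the limit with the formula (\ref{eqn:5.4}).

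To prove the vanishing, I would view $g(x):=(\partial^{\alpha}f)(T_W(x))$ as a smooth function of the $V$-coordinates on $U_W:=U\cap T_W(\R^n)$. Lemma~6.9 gives $g\in{\mathcal E}_{M^{+\langle\alpha\rangle}}(U_W)$. A short separate check shows that $M^{+\langle\alpha\rangle}$ still satisfies the log-convexity (\ref{eqn:6.7}) and the Denjoy--Carleman divergence (\ref{eqn:6.8}) (log-convexity is preserved by shift, and in (\ref{eqn:6.8}) the shifted series majorizes the tail of the original one term by term, since the denominator $(j+1)M_{j+k+1}$ is smaller than $(j+k+1)M_{j+k+1}$), so the shifted class is again quasianalytic. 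It then suffices to show that the formal Taylor series of $g$ at the origin vanishes identically. Differentiating $\hat f(y)=\sum_\beta c_\beta y^\beta$ and using that $\alpha_j=0$ for $j\in V$, the $\mu$-th Taylor coefficient of $g$ (for $\mu\in T_W(\Z_+^n)$) is a positive multiple of $c_{\mu+\alpha}$.

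The final ingredient is the polyhedral geometry: by Lemma~4.4, $a_j=0$ exactly for $j\in V$, so $\langle a,\mu+\alpha\rangle=\langle a,\alpha\rangle<l$. This places $\mu+\alpha$ outside $H^+(a,l)\supset P\supset\Gamma_+(f)$ and forces $c_{\mu+\alpha}=0$. Hence every Taylor coefficient of $g$ at the origin vanishes, quasianalyticity forces $g\equiv 0$ on $U_W$ (shrinking $U$ if necessary so that $U_W$ is connected), and the vanishing claim is proved. This yields $f\in\hat{\mathcal E}[P](U)$, and the ``in particular'' assertion follows by taking $P=\Gamma_+(f)$. The one point I expect to require genuine care is the verification that quasianalyticity is preserved under the shift $M\mapsto M^{+k}$ induced by differentiation; the rest is bookkeeping with Taylor's formula combined with the polyhedral inclusion $\Gamma_+(f)\subset P\subset H^+(a,l)$.
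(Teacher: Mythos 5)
Your proof is correct and follows essentially the same route as the paper's: Taylor expansion via Lemma~5.1, reduce admission of the $\gamma$-part to the vanishing of $(\partial^\alpha f)\circ T_W$ for $\alpha$ with $\langle a,\alpha\rangle<l$, then deduce that vanishing from the shape of $\Gamma_+(f)\subset P\subset H^+(a,l)$ together with Lemma~6.9 and quasianalyticity. The one point you flag — that the shifted sequence $M^{+k}$ still defines a quasianalytic Denjoy--Carleman class — is indeed the only step the paper leaves implicit, and your comparison-test argument for the divergence condition~(\ref{eqn:6.8}) is the right way to fill that in.
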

\begin{proof}
Let $f$ belong to 
${\mathcal E}_M(U)\cap{\mathcal E}[P](U)$, 
let $\gamma$ be an arbitrary proper face of $P$
defined by a valid pair $(a,l)$ and 
let $V=V(\gamma)$ and $W=W(\gamma)$.
Then, from Lemma~5.1, 
$f$ can be expressed as (\ref{eqn:5.1}),
where $N\in\N$ satisfies the condition
$H_V(a,l)\subset A_V(N)$.
It follows from the shape of 
the Newton polyhedron of $f$ that 
if $\langle a,\alpha\rangle <l$, then
$\partial^{\beta}(\partial^{\alpha}f)(0)=0$
for any $\beta\in T_W(\Z_+^n)$. 
Since $(\partial^{\alpha}f)_W \in
{\mathcal E}_{M^{+|\alpha|}}(U_W)$ from Lemma~6.9,
the quasianalytic property (\ref{eqn:6.6})
implies $(\partial^{\alpha}f)_W\equiv 0$
on $U_W$ if $\langle a,\alpha\rangle <l$. 
In the same fashion as in the proof of Proposition~5.2, 
we can see that $f$ admits the $\gamma$-part. 
\end{proof}

\subsection{Remarks on the definition of the $\gamma$-part}
We discuss delicate issues on 
the definitions of the $\gamma$-part and 
the class $\hat{{\mathcal E}}[P](U)$. 
Symbols are the same as in Sections~2 and 5.
Let us consider the difference of the following two conditions:
\begin{enumerate}
\item[(a)] For any $x\in U$, the limit (\ref{eqn:2.2}) exists 
for {\it all} valid pairs defining $\gamma$.
\item[(b)] For any $x\in U$, the limit (\ref{eqn:2.2}) exists 
for {\it some} valid pair defining $\gamma$.
\end{enumerate}
Recall that $f$ is said to 
{\it admit the $\gamma$-part}
on $U$ if (a) holds. 
Here, (a) obviously implies (b), but 
(b) may not imply (a). 
Indeed, the following three-dimensional example shows this: 
$f(x)=x_1+x_2^2\exp(-1/x^2_3)$. 
In this case, $\Gamma_+(f)=\{(1,0,0)\}+\R_+^3$.
In the case of the face 
$\gamma=\{(1,0,\alpha_3);\alpha_3\in\R_+\}$, 
the limit (\ref{eqn:2.2}) exists for $a=(1,1,0),l=1$, while
it does not exist for $a=(3,1,0),l=3$. 

When $\gamma$ is compact, both (a) and (b) always hold
from the proof of Proposition~5.2. 
Moreover, if $\gamma$ is a facet of $P$, 
then the above (a) and (b) are equivalent.  
Indeed, 
if $(a,l)$ is some valid pair defining $\gamma$, 
then
every valid pair defining $\gamma$ is expressed as
$(ca,cl)$ with $c>0$.
These facts imply that the equivalence of (a) and (b) 
always holds 
in the two-dimensional case because every noncompact
face is a facet.

Next, let us consider the definition of
$\hat{{\mathcal E}}[P](U)$. 
From the proof of Proposition~6.3 and 
the above argument, the equivalence of (ii) and (iii)
in Lemma~4.1 implies that
``any face'' can be replaced by 
``any noncompact facet'' in the definition 
of $\hat{{\mathcal E}}[P](U)$ in Section~2.4.
Therefore, even if
(a) is replaced by (b), this exchange does not
affect the definition of $\hat{{\mathcal E}}[P](U)$.

\section{Toric varieties constructed from polyhedra}

Let $P\subset\R_+^n$ 
be a nonempty $n$-dimensional polyhedron 
satisfying $P+\R_+^n \subset P$.
In this section, 
we recall the method to construct 
a toric variety from a given polyhedron $P$. 
Refer to \cite{ful93}, etc.  
for general theory of toric varieties.


\subsection{Cones and fans}
A {\it rational polyhedral cone} 
$\sigma\subset \R^n$ is a cone
generated by finitely many elements of $\Z^n$. 
In other words, 
there are $u_1,\ldots,u_k \in \Z^n$ such that  
$$ 
\sigma=\{\lambda_1 u_1+\cdots+\lambda_k u_k \in {\R}^n;
\lambda_1,\ldots,\lambda_k\geq 0\}. 
$$
We say that $\sigma$ is {\it strongly convex} if 
$\sigma\cap(-\sigma)=\{0\}$. 

By regarding a cone as a polyhedron in $\R^n$, 
the definitions of {\it dimension}, {\it face}, 
{\it edge}, {\it facet} for the cone 
are given by the same way as in Section~2.  

The {\it fan} is defined to be a finite collection $\Sigma$ 
of cones in ${\R}^n$ with the following properties:
\begin{itemize}
\item
Each $\sigma\in \Sigma$ 
is a strongly convex rational polyhedral cone;
\item 
If $\sigma\in\Sigma$ and $\tau$ is a face of $\sigma$, then 
$\tau\in \Sigma$;
\item 
If $\sigma,\tau\in\Sigma$, 
then $\sigma\cap\tau$ is a face of each.    
\end{itemize}
For a fan $\Sigma$,
the union $|\Sigma|:=\bigcup_{\sigma\in\Sigma}\sigma$ 
is called the {\it support} of $\Sigma$. 
For $k=0,1,\ldots,n$, we denote by $\Sigma^{(k)}$  
the set of $k$-dimensional cones in $\Sigma$.  
The {\it skeleton} of a cone $\sigma\in\Sigma$ is 
the set of all of its primitive 
integer vectors 
(i.e., with components relatively prime in $\Z_+$)
in the edges of $\sigma$. 
It is clear that the skeleton of 
$\sigma\in\Sigma^{(k)}$ 
generates $\sigma$ itself and 
that the number of the elements of skeleton 
is not less than $k$. 
Thus, the set of skeletons of the cones 
belonging to $\Sigma^{(k)}$ is also expressed 
by the same symbol $\Sigma^{(k)}$.


\subsection{The fan associated with $P$ 
and its simplicial subdivision}

%

We denote by $(\R^n)^*$ the dual space of $\R^n$ 
with respect to the standard inner product. 
For $a\in(\R^n)^*$, 
define
\begin{equation}
l(a)=\min\left\{
\langle a,\alpha\rangle ; \alpha\in P
\right\}
\label{eqn:7.1}
\end{equation}
and 
$\gamma(a)=
\{\alpha\in P;\langle a,\alpha\rangle =l(a)\}
(=H(a,l(a))\cap P)$.
We introduce an equivalence relation $\sim$ 
in $(\R^n)^*$ by $a\sim a'$ 
if and only if $\gamma(a)=\gamma(a')$. 
For any $k$-dimensional face $\gamma$ of $P$, 
there is an equivalence class $\gamma^*$ which is defined by 
\begin{eqnarray}
&&\gamma^*:=
\{a\in (\R^n)^*;\gamma(a)=\gamma \mbox{ and $a_j\geq 0$ 
for $j=1,\ldots,n$}\}
\label{eqn:7.2}\\
&&\quad(=
\{a\in (\R^n)^*;\gamma=H(a,l(a))\cap P 
\mbox{ and $a_j\geq 0$ 
for $j=1,\ldots,n$}\}.)
\nonumber
\end{eqnarray}
Here, $P^*:=0$. 
The closure of $\gamma^*$, denoted by 
$\overline{\gamma^*}$, is expressed as
\begin{equation}
\overline{\gamma^*}=
\{a\in (\R^n)^*;\gamma\subset H(a,l(a))\cap P 
\mbox{ and $a_j\geq 0$ 
for $j=1,\ldots,n$}\}.
\label{eqn:7.3}
\end{equation}
It is easy to see that  
$\overline{\gamma^*}$ is an $(n-k)$-dimensional strongly convex rational 
polyhedral cone in $(\R^n)^*$ and, moreover,  
the collection of  
$\overline{\gamma^*}$ gives a fan $\Sigma_0$, 
which is called the {\it fan associated with
a polyhedron} $P$.  
Note that 
$|\Sigma_0|=\R_+^n$.


It is known that 
there exists a {\it simplicial subdivision} $\Sigma$
of $\Sigma_0$,  
that is, $\Sigma$ is a fan satisfying the following properties:
\begin{itemize}
\item 
The fans $\Sigma_0$ and $\Sigma$ have the same support; 
\item 
Each cone of $\Sigma$ lies in some cone of $\Sigma_0$; 
\item 
The skeleton of any cone belonging to $\Sigma$ can be completed 
to a base of the lattice dual to $\Z^n$.
\end{itemize}


\subsection{Construction of toric varieties}

Let $\Sigma_0$ be the fan associated with 
$P$ and 
fix a simplicial subdivision $\Sigma$ of $\Sigma_0$. 
For an $n$-dimensional cone $\sigma\in\Sigma$, 
let 
$a^1(\sigma),\ldots,a^n(\sigma)$ be the skeleton of 
$\sigma$, ordered once and for all. 
Here, we set the coordinates of the vector $a^j(\sigma)$ as 
$$
a^j(\sigma)=(a^j_1(\sigma),\ldots,a^j_n(\sigma)).
$$
With every such cone $\sigma$, we
associate a copy of $\R^n$ which is denoted by 
$\R^n(\sigma)$.
We denote by
$ 
\pi(\sigma):\R^n(\sigma) \to \R^n
$
the map defined by  
$(x_1,\ldots,x_n)=\pi(\sigma)(y_1,\ldots,y_n)$ 
with
\begin{equation}
x_k= \prod_{j=1}^n y_j^{a_k^j(\sigma)}=
y_1^{a_k^1(\sigma)}\cdots y_n^{a_k^n(\sigma)}, \quad\quad 
k=1,\ldots,n.
\label{eqn:7.4} 
\end{equation}
Let $Y_{\Sigma}$ be the union of $\R^{n}(\sigma)$ for $\sigma$
which are glued along the images of $\pi(\sigma)$. 
Indeed, for any $n$-dimensional cones $\sigma,\sigma'\in\Sigma$, 
two copies $\R^n(\sigma)$ and $\R^n(\sigma')$ can be
identified with respect to a rational mapping:
$\pi^{-1}(\sigma')\circ \pi(\sigma):
\R^n(\sigma)\to \R^n(\sigma')$ 
(i.e., $x\in\R^n(\sigma)$ and $x'\in\R^n(\sigma')$ will coalesce 
if $\pi^{-1}(\sigma')\circ\pi(\sigma):x\mapsto x'$).
Then it is known that
\begin{itemize}
\item
$Y_{\Sigma}$ is an $n$-dimensional 
real algebraic manifold;
\item
The map 
$\pi:Y_{\Sigma}\to\R^n$
defined on each $\R^n(\sigma)$ as 
$\pi(\sigma):\R^n(\sigma)\to\R^n$ is proper. 
\end{itemize}
The manifold $Y_{\Sigma}$ is called the 
(real) {\it toric variety} associated with $\Sigma$.            

The following properties of $\pi(\sigma)$ are useful for 
the analysis in Section~9. 
They can be easily seen, so 
we omit their proofs.

\begin{lemma}
The set of the points in $\R^n(\sigma)$ in which $\pi(\sigma)$ 
is not an isomorphism is a union of coordinate planes. 
\end{lemma}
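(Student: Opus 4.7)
The plan is to show that $\pi(\sigma)$ fails to be an isomorphism precisely on the vanishing set of its Jacobian determinant, and then to verify that this vanishing set is contained in the union of the coordinate hyperplanes $\{y_j=0\}$ of $\R^n(\sigma)$.

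First, I would differentiate the monomial formula $x_k = \prod_{j=1}^n y_j^{a_k^j(\sigma)}$ on the open torus $\{y_1\cdots y_n\neq 0\}$, obtaining
\[
\frac{\partial x_k}{\partial y_i}=\frac{a_k^i(\sigma)}{y_i}\,x_k.
\]
This exhibits the Jacobian matrix of $\pi(\sigma)$ as $J=D_x\,A(\sigma)\,D_y^{-1}$, where $A(\sigma)=(a_k^j(\sigma))_{k,j}$ is the $n\times n$ integer matrix whose columns are the skeleton vectors, and $D_x=\mathrm{diag}(x_1,\ldots,x_n)$, $D_y=\mathrm{diag}(y_1,\ldots,y_n)$. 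Substituting back the monomial expression for $x_1\cdots x_n$, one finds
\[
\det J=(\det A(\sigma))\cdot \prod_{j=1}^n y_j^{s_j-1},\qquad s_j:=\sum_{k=1}^n a_k^j(\sigma).
\]

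Next, I invoke the two structural facts recalled in Section~7.2--7.3. Since $|\Sigma|=\R_+^n$, every skeleton vector $a^j(\sigma)$ lies in $\Z_+^n$ and is primitive, so each $s_j\geq 1$; hence the right-hand side above extends from the torus to a polynomial in $y$ on all of $\R^n(\sigma)$, and this polynomial is the correct formula for $\det J$ throughout $\R^n(\sigma)$. Moreover, the simplicial subdivision property guarantees that the skeleton of $\sigma$ forms a basis of the lattice dual to $\Z^n$, so $\det A(\sigma)=\pm 1$. The zero set of $\det J$ is therefore $\bigcup_{j:\,s_j\geq 2}\{y_j=0\}$, a union of coordinate hyperplanes.

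Finally, on the torus $\{y_1\cdots y_n\neq 0\}$ the Jacobian is nonvanishing, so $\pi(\sigma)$ is a local diffeomorphism there; and because $A(\sigma)$ is unimodular, its inverse has integer entries, so the monomial formulas $y_j=\prod_k x_k^{(A(\sigma)^{-1})_{jk}}$ furnish a regular inverse on the image torus $\{x_1\cdots x_n\neq 0\}$. Hence the complement of the non-isomorphism locus contains the torus, and what remains is contained in $\bigcup_{j=1}^n\{y_j=0\}$, which gives the stated conclusion. The only mildly delicate point is distinguishing between vanishing of the Jacobian and failure of global invertibility (the latter being what the word \emph{isomorphism} really requires), but both issues are confined to the coordinate hyperplanes, so the conclusion of the lemma is the same either way; this is the only step that requires a moment's care rather than pure computation.
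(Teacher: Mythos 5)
Your proof is correct. The paper explicitly omits the proofs of Lemmas~7.1 and 7.2 (``They can be easily seen, so we omit their proofs''), so there is no official argument to compare against; your derivation is the standard and natural one. Your Jacobian formula $\det J = (\det A(\sigma))\prod_j y_j^{s_j-1}$ with $\det A(\sigma)=\pm1$ matches the paper's Lemma~7.2 exactly, since $s_j = \langle a^j(\sigma)\rangle$, and the use of the unimodularity of $A(\sigma)$ (from the simplicial subdivision property that each skeleton completes to a lattice basis) to both evaluate $\epsilon=\det A(\sigma)$ and produce the monomial inverse on the torus is precisely the right way to fill the gap. Your closing remark correctly flags the only conceptual point: here ``isomorphism at a point'' is to be read as local diffeomorphism, and since nonvanishing Jacobian at $y$ is equivalent to $\pi(\sigma)$ being a local diffeomorphism at $y$, the non-isomorphism locus coincides with the Jacobian zero set $\bigcup_{j:\,s_j\geq 2}\{y_j=0\}$, a union of coordinate hyperplanes.
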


\begin{lemma} 
The Jacobian of the mapping $\pi(\sigma)$ 
is equal to 
\begin{equation}
J_{\pi(\sigma)}(y)
=\epsilon \prod_{j=1}^n  
y_j^{\langle a^j(\sigma)\rangle -1},
\label{eqn:7.5}
\end{equation}
where $\epsilon$ is $1$ or $-1$.
\end{lemma}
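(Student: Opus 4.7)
The plan is a direct computation of the Jacobian, followed by an appeal to the defining property of the simplicial subdivision to identify the scalar factor as $\pm 1$.

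First, I would differentiate the defining formula $(\ref{eqn:7.4})$. Since $x_k = \prod_{j=1}^n y_j^{a_k^j(\sigma)}$, away from the coordinate planes $\{y_i = 0\}$ one computes
\begin{equation*}
\frac{\partial x_k}{\partial y_i} = a_k^i(\sigma)\, y_i^{a_k^i(\sigma)-1} \prod_{j\neq i} y_j^{a_k^j(\sigma)} = \frac{a_k^i(\sigma)\, x_k}{y_i}.
\end{equation*}
Thus the Jacobian matrix is $\bigl( a_k^i(\sigma)\, x_k / y_i \bigr)_{k,i}$, and factoring $x_k$ out of the $k$-th row and $1/y_i$ out of the $i$-th column gives
\begin{equation*}
J_{\pi(\sigma)}(y) = \det\bigl(a_k^i(\sigma)\bigr) \cdot \frac{x_1\cdots x_n}{y_1\cdots y_n}.
\end{equation*}

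Next I would simplify $x_1\cdots x_n$ in terms of $y$. Using $(\ref{eqn:7.4})$ again,
\begin{equation*}
\prod_{k=1}^n x_k = \prod_{k=1}^n \prod_{j=1}^n y_j^{a_k^j(\sigma)} = \prod_{j=1}^n y_j^{\sum_{k=1}^n a_k^j(\sigma)} = \prod_{j=1}^n y_j^{\langle a^j(\sigma)\rangle}.
\end{equation*}
Substituting this into the previous expression yields
\begin{equation*}
J_{\pi(\sigma)}(y) = \det\bigl(a_k^i(\sigma)\bigr) \cdot \prod_{j=1}^n y_j^{\langle a^j(\sigma)\rangle - 1},
\end{equation*}
which has exactly the form stated in $(\ref{eqn:7.5})$ provided that $\det(a_k^i(\sigma)) = \pm 1$.

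The final step identifies $\epsilon$. By construction, since $\sigma$ is an $n$-dimensional cone of the simplicial subdivision $\Sigma$, its skeleton $a^1(\sigma),\ldots,a^n(\sigma)$ forms a $\Z$-basis of the lattice $\Z^n$ (this is precisely the third bullet in Section~7.2 about the simplicial subdivision). An integer matrix whose columns form a $\Z$-basis of $\Z^n$ has determinant $\pm 1$, so $\epsilon := \det(a_k^i(\sigma)) \in \{+1, -1\}$, completing the proof. I do not anticipate any real obstacle here; the only point to be careful about is that the formula has initially been derived on the open set where all $y_i \neq 0$, but since both sides are polynomial (in fact monomial times a constant) in $y$, the identity extends by continuity to all of $\R^n(\sigma)$.
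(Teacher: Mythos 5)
Your computation is correct, and since the paper explicitly omits the proofs of Lemmas 7.1 and 7.2 (``They can be easily seen, so we omit their proofs''), there is no paper proof to compare against; your argument is the standard and essentially unique one: differentiate the monomial map, factor $x_k$ from each row and $1/y_i$ from each column to extract $\det(a_k^i(\sigma))$, rewrite $\prod_k x_k$ as $\prod_j y_j^{\langle a^j(\sigma)\rangle}$, and invoke unimodularity of the skeleton from the third defining property of the simplicial subdivision to conclude $\det(a_k^i(\sigma))=\pm 1$. Your closing remark about extending by continuity from the dense open set $\{y_i\neq 0\ \forall i\}$ correctly handles the only subtle point.
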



\section{
Toric resolution of singularities 
in the class $\hat{{\mathcal E}}(U)$
}

\subsection{Preliminaries}

Let us show many lemmas 
which play important roles in the construction of 
toric resolutions of singularities 
in the class $\hat{{\mathcal E}}(U)$.
Some of them will
be useful for the analysis of local zeta functions
in Section~9. 

Let us explain symbols which will be used in this subsection.

\begin{itemize}
\item
$P\subset\R_+^n$ is a polyhedron satisfying 
$P+\R_+^n\subset P$; 
\item
$\Sigma_0$ is the fan associated with the polyhedron
$P$; 
\item
$\Sigma$ is a simplicial subdivision of $\Sigma_0$;
\item
$\Sigma^{(n)}$ consists of $n$-dimensional cones in $\Sigma$;
\item 
$a^1(\sigma),\ldots,a^n(\sigma)$ is the skeleton
of $\sigma\in\Sigma^{(n)}$, ordered once and for all;
\item
${\mathcal P}(\{1,\ldots,n\})$
is the set of all subsets in 
$\{1,\ldots,n\}$;
\item
${\mathcal F}(P)$ is the set of 
nonempty faces of $P$;
\item
When $I\in{\mathcal P}(\{1,\ldots,n\})$, 
we write $J:=\{1,\ldots,n\}\setminus I$;
\item
$H(\cdot,\cdot)$, $l(\cdot)$ 
are as in (\ref{eqn:2.1}), (\ref{eqn:7.1}), 
respectively.
\end{itemize}

Let $\sigma\in\Sigma^{(n)}$, 
$\gamma\in {\mathcal F}(P)$ and
$I\in{\mathcal P}(\{1,\ldots,n\})$.
Define
\begin{eqnarray}
&&\gamma(I,\sigma)
:=
\bigcap_{j\in I}
H(a^j(\sigma),l(a^j(\sigma)))\cap P, 
\label{eqn:8.1}\\
&&
I(\gamma,\sigma)
:=
\{j;\gamma\subset 
H(a^j(\sigma),l(a^j(\sigma)))\}.
\label{eqn:8.2}
\end{eqnarray}
Here set $\gamma(\emptyset,\sigma):=P$. 
It is easy to see that $\gamma(I,\sigma)\in{\mathcal F}(P)$ and 
$I(P,\sigma)=\emptyset$.


\begin{lemma} 
For $\sigma\in\Sigma^{(n)}$, 
$\gamma\in {\mathcal F}(P)$,
$I\in{\mathcal P}(\{1,\ldots,n\})$, 
we have the following. 
\begin{enumerate}
\item
$\gamma\subset\gamma(I(\gamma,\sigma),\sigma)$ and
$\dim(\gamma)\leq n-\card I(\gamma,\sigma)$.
\item 
$\gamma=\gamma(I,\sigma)
\Longrightarrow I\subset I(\gamma,\sigma)
\Longrightarrow \dim(\gamma)\leq n-\card I$.
\end{enumerate}
\end{lemma}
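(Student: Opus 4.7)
The plan is to unpack the two definitions and use only two facts: that each $a^j(\sigma)$ is treated as a valid-pair normal through the affine hyperplane $H(a^j(\sigma),l(a^j(\sigma)))$, and that the skeleton $\{a^1(\sigma),\ldots,a^n(\sigma)\}$ of an $n$-dimensional cone $\sigma\in\Sigma^{(n)}$ is linearly independent (in fact it forms a $\Z$-basis of the dual lattice, because $\Sigma$ is a simplicial subdivision of $\Sigma_0$, as stated in the third bullet of Section~7.2).

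For the first inclusion in (i), I would argue directly from the definitions. If $j\in I(\gamma,\sigma)$ then $\gamma\subset H(a^j(\sigma),l(a^j(\sigma)))$ by (\ref{eqn:8.2}); taking the intersection over all such $j$ and using $\gamma\subset P$, we obtain
\[
\gamma\subset \bigcap_{j\in I(\gamma,\sigma)}H(a^j(\sigma),l(a^j(\sigma)))\cap P
=\gamma(I(\gamma,\sigma),\sigma),
\]
which is the desired inclusion (with the convention $\gamma(\emptyset,\sigma)=P$ when $I(\gamma,\sigma)$ is empty).

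For the dimension bound in (i), the key observation is that each $H(a^j(\sigma),l(a^j(\sigma)))$ is an affine hyperplane in $\R^n$ with normal $a^j(\sigma)$. Because the vectors $\{a^j(\sigma)\}_{j=1}^n$ are linearly independent, the intersection of any $k$ of these hyperplanes is an affine subspace of dimension exactly $n-k$. Since $\gamma\subset\gamma(I(\gamma,\sigma),\sigma)$ is contained in the intersection of $\card I(\gamma,\sigma)$ such hyperplanes, it follows that $\dim(\gamma)\leq n-\card I(\gamma,\sigma)$. This is the only nontrivial step, but it is straightforward given the simpliciality of $\Sigma$.

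Part (ii) is a formal consequence of (i). If $\gamma=\gamma(I,\sigma)$, then for each $j\in I$ the defining intersection yields $\gamma\subset H(a^j(\sigma),l(a^j(\sigma)))$, so $j\in I(\gamma,\sigma)$ by (\ref{eqn:8.2}); this gives the first implication $I\subset I(\gamma,\sigma)$. Combining this with (i) we conclude
\[
\dim(\gamma)\leq n-\card I(\gamma,\sigma)\leq n-\card I,
\]
which is the second implication. I expect no real obstacle: the whole argument is a matter of unwrapping the notation from (\ref{eqn:8.1})--(\ref{eqn:8.2}) and invoking the basis property of the skeleton once.
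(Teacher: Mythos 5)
Your proof is correct and takes essentially the same route as the paper: the inclusion in (i) and the first implication in (ii) are read off from the definitions in exactly the way the paper does, and the second implication in (ii) follows formally from the inequality in (i). The only place you add detail is the dimension bound in (i), where you explicitly invoke the linear independence of the skeleton vectors $a^1(\sigma),\ldots,a^n(\sigma)$ (guaranteed because $\sigma$ is an $n$-dimensional simplicial cone) so that the intersection of $\card I(\gamma,\sigma)$ hyperplanes is an affine subspace of codimension $\card I(\gamma,\sigma)$; the paper compresses this into ``directly seen from the definitions.''
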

\begin{proof}
(i) is directly seen from the definitions of 
$\gamma(I,\sigma)$ and $I(\gamma,\sigma)$.
The first implication in (ii) 
is shown as follows: 
$\gamma=\gamma(I,\sigma)
\Rightarrow
\gamma
=
\bigcap_{j\in I}
H(a^j(\sigma),l(a^j(\sigma)))\cap P
\Rightarrow
\gamma\subset H(a^j(\sigma),l(a^j(\sigma)))
\mbox{ for $j\in I$}
\Rightarrow
I\subset I(\gamma,\sigma)$.
From the inequality in (i), 
the second implication in (ii) is obvious.
\end{proof}

Next, consider the case 
when $\dim(\gamma)=n-\card I(\gamma,\sigma)$. 
Define
\begin{equation}
\Sigma^{(n)}(\gamma):=\{
\sigma\in\Sigma^{(n)};\dim(\gamma)=n-\card I(\gamma,\sigma)
\}.
\label{eqn:8.3}
\end{equation}
Note that $\Sigma^{(n)}(P)=\Sigma^{(n)}$. 

\begin{lemma}
For $\sigma\in\Sigma^{(n)}$, 
$\gamma\in {\mathcal F}(P)$,
$I\in{\mathcal P}(\{1,\ldots,n\})$, 
we have the following. Here $\gamma^*$ is as in 
$(\ref{eqn:7.2})$.
\begin{enumerate}
\item 
$\card I(\gamma,\sigma)=\dim(\gamma^*\cap\sigma).$
\item
$\Sigma^{(n)}(\gamma)
=\{
\sigma\in\Sigma^{(n)};\dim(\gamma^* \cap\sigma)
=\dim (\gamma^*)
\}\neq\emptyset.
$
\item 
If $\sigma\in\Sigma^{(n)}(\gamma)$, then
$\gamma=\gamma(I(\gamma,\sigma),\sigma)$.
\end{enumerate}
\end{lemma}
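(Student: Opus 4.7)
The plan is to reduce all three assertions to a single explicit description of $\overline{\gamma^*}\cap\sigma$ as a face of the simplicial cone $\sigma$. The starting observation is that, comparing the definition of $I(\gamma,\sigma)$ with $(\ref{eqn:7.3})$, one has the equivalence $j\in I(\gamma,\sigma)\iff a^j(\sigma)\in\overline{\gamma^*}$, since $a^j(\sigma)\in\sigma\subset\R_+^n$ automatically satisfies the coordinate positivity requirement in $(\ref{eqn:7.3})$. By convexity of $\overline{\gamma^*}$, the cone spanned by $\{a^j(\sigma):j\in I(\gamma,\sigma)\}$ is then contained in $\overline{\gamma^*}\cap\sigma$, giving the easy inclusion.

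The core step is the reverse inclusion. Writing an arbitrary $x\in\sigma$ uniquely as $x=\sum_j\lambda_ja^j(\sigma)$ with $\lambda_j\geq 0$ (possible because the skeleton of $\sigma$ is a lattice basis) and setting $S:=\{j:\lambda_j>0\}$, I want to show that $x\in\overline{\gamma^*}$ forces $S\subset I(\gamma,\sigma)$. Fixing any $\alpha\in\gamma$, one has $\langle x,\alpha\rangle=l(x)$ from $x\in\overline{\gamma^*}$, while for every $\beta\in P$ one has $\langle x,\beta\rangle\geq\sum_{j\in S}\lambda_j l(a^j(\sigma))$ by the definition of $l(a^j(\sigma))$. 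Hence
\[
\sum_{j\in S}\lambda_j l(a^j(\sigma))\leq l(x)=\langle x,\alpha\rangle=\sum_{j\in S}\lambda_j\langle a^j(\sigma),\alpha\rangle,
\]
and since $\langle a^j(\sigma),\alpha\rangle\geq l(a^j(\sigma))$ and $\lambda_j>0$ for $j\in S$, equality is forced termwise: $\langle a^j(\sigma),\alpha\rangle=l(a^j(\sigma))$ for every $j\in S$ and every $\alpha\in\gamma$. That is, $S\subset I(\gamma,\sigma)$. Consequently $\overline{\gamma^*}\cap\sigma$ is exactly the face of $\sigma$ generated by $\{a^j(\sigma):j\in I(\gamma,\sigma)\}$, a cone of dimension $\card I(\gamma,\sigma)$. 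Since $\gamma^*$ is the relative interior of $\overline{\gamma^*}$, $\gamma^*\cap\sigma$ has the same affine hull as $\overline{\gamma^*}\cap\sigma$ whenever nonempty, and (i) follows.

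Assertion (ii) is then essentially formal. The equality $\dim\gamma^*=n-\dim\gamma$ translates the condition $\dim(\gamma^*\cap\sigma)=\dim\gamma^*$ into $\card I(\gamma,\sigma)=n-\dim\gamma$, which is the defining condition for $\Sigma^{(n)}(\gamma)$. For nonemptiness, I use that $\Sigma$ refines $\Sigma_0$, so $\overline{\gamma^*}\in\Sigma_0$ is a union of cones of $\Sigma$ and thus contains some $\tau\in\Sigma$ of dimension $\dim\overline{\gamma^*}=n-\dim\gamma$; any $\sigma\in\Sigma^{(n)}$ having $\tau$ as a face then gives $\card I(\gamma,\sigma)\geq n-\dim\gamma$ by (i), while Lemma~8.1(i) forces the reverse inequality, so $\sigma\in\Sigma^{(n)}(\gamma)$. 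For (iii), Lemma~8.1(i) yields $\gamma\subset\gamma(I(\gamma,\sigma),\sigma)$, and the latter is a face of $P$ cut out by $\card I(\gamma,\sigma)$ linearly independent hyperplanes, hence of dimension at most $n-\card I(\gamma,\sigma)=\dim\gamma$; an inclusion of two faces of a polyhedron with equal dimension is an equality, so $\gamma=\gamma(I(\gamma,\sigma),\sigma)$.

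The main obstacle is the reverse inclusion step in (i), where the simplicial structure of $\sigma$ (unique nonnegative expansion in the skeleton) and the minimization defining $l(\cdot)$ must be combined precisely in the chain of inequalities above; once that set equality is pinned down, assertions (ii) and (iii) reduce to short dimension-counting arguments using the refinement property of $\Sigma$ over $\Sigma_0$ together with the rank structure of the face lattice of $P$.
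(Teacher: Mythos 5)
Your overall plan---reduce all three claims to the fact that $\overline{\gamma^*}\cap\sigma$ is exactly the face of the simplicial cone $\sigma$ spanned by $\{a^j(\sigma):j\in I(\gamma,\sigma)\}$---is sound, and your arguments for (ii) and (iii) are correct once (i) is established. However, the crux of (i), the reverse inclusion, has a genuine gap. You derive
\[
\sum_{j\in S}\lambda_j\, l(a^j(\sigma))\ \leq\ l(x)\ =\ \langle x,\alpha\rangle\ =\ \sum_{j\in S}\lambda_j\,\langle a^j(\sigma),\alpha\rangle,
\]
and then assert that, since $\langle a^j(\sigma),\alpha\rangle\geq l(a^j(\sigma))$ termwise and $\lambda_j>0$, equality is forced termwise. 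It is not: the global inequality you have written points in the same direction as the termwise ones, so no sandwich closes. What is missing is the reverse bound $l(x)\leq\sum_{j\in S}\lambda_j\,l(a^j(\sigma))$, and to obtain it you must exhibit a single point $\beta^*\in P$ at which all the minima defining $l(a^j(\sigma))$, $j\in S$, are attained simultaneously; then $l(x)\leq\langle x,\beta^*\rangle=\sum_{j\in S}\lambda_j\,l(a^j(\sigma))$, the two sides collapse, and termwise equality follows. The existence of such $\beta^*$ is precisely where the refinement property $\sigma\subset\overline{\delta^*}$ for some face $\delta$ of $P$ is needed (equivalently, that $\gamma(\{1,\ldots,n\},\sigma)$ is a nonempty face of $P$, the fact used in the paper's Lemma 8.8): any $\beta^*\in\delta$ works. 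You do invoke the refinement property later in your proof of (ii), but the place where it is logically indispensable is this step in (i). The paper's own proof of (i) is terser: it records the equivalences $j\in I(\gamma,\sigma)\Leftrightarrow a^j(\sigma)\in\overline{\gamma^*}\cap\sigma$ and leaves implicit that $\overline{\gamma^*}\cap\sigma$ is a face of the simplicial cone $\sigma$, which is again a consequence of the refinement property. Your more hands-on route is a reasonable alternative, but it must be patched as above to be correct.
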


\begin{proof}
(i)\quad
This equation follows from the following equivalences.
$$
j\in I(\gamma,\sigma)
\Leftrightarrow
\gamma\subset H(a^j(\sigma),l(a^j(\sigma)))
\Leftrightarrow
a^j(\sigma)\in \overline{\gamma^*}
\Leftrightarrow
a^j(\sigma)\in 
\overline{\gamma^*}\cap\sigma,
$$
where $\overline{\gamma^*}$ denotes
the closure of ${\gamma^*}$. 
Note that the second equivalence follows from 
(\ref{eqn:7.3}).

(ii)\quad
Putting the equation in (i) and $\dim(\gamma^*)=n-\dim(\gamma)$
together, we see the equality of the sets.
Since the support of the fan $\Sigma$ is $\R_+^n$, 
there exists $\sigma$ such that 
$\dim(\gamma^* \cap\sigma)
=\dim (\gamma^*)$, which implies 
$\Sigma^{(n)}(\gamma)\neq\emptyset$.  

(iii)\quad 
Lemma~8.1 (i),(ii) and 
the assumption imply 
$\dim(\gamma)=\dim(\gamma(I(\gamma,\sigma),\sigma))$. 
In fact, 
$
\dim(\gamma)\leq\dim(\gamma(I(\gamma,\sigma),\sigma))
\leq n-\card I(\gamma,\sigma)=\dim(\gamma).
$
Since $\gamma\subset\gamma(I(\gamma,\sigma),\sigma)$
from Lemma~8.1 (i),  
the above dimensional equation yields 
$\gamma=\gamma(I(\gamma,\sigma),\sigma)$. 
\end{proof}

\begin{remark}
It follows from Lemma~8.2 (iii) that 
the map $\gamma:{\mathcal P}(\{1,\ldots,n\})\times\Sigma^{(n)}
\longrightarrow{\mathcal F}(P)$ is surjective. 
\end{remark}

Hereafter in this subsection, 
we always assume that $\sigma\in\Sigma^{(n)}$, 
$\gamma\in {\mathcal F}(P)$,
$I\in{\mathcal P}(\{1,\ldots,n\})$ have the relationship
$\gamma(I,\sigma)=\gamma\in{\mathcal F}(P)$. 

\begin{lemma}
The pair 
$(\sum_{j\in I}a^j(\sigma),\sum_{j\in I}l(a^j(\sigma)))$
is valid for $P$ and defines the face $\gamma$.
\end{lemma}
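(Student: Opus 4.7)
The proof is a direct computation from the definitions, broken into three small observations.

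First, I would verify that $(\sum_{j \in I} a^j(\sigma), \sum_{j \in I} l(a^j(\sigma)))$ lies in $\Z_+^n \times \Z_+$, so that validity makes sense in the sense of Section~2. Each skeleton vector $a^j(\sigma)$ is a primitive integer vector in an edge of $\sigma$, and $|\Sigma| = \R_+^n$ forces $a^j(\sigma) \in \Z_+^n$; moreover $l(a^j(\sigma)) = \min\{\langle a^j(\sigma), \alpha\rangle; \alpha \in P\} \ge 0$ because $P \subset \R_+^n$ and $a^j(\sigma) \in \R_+^n$. Summing over $j \in I$ keeps us in $\Z_+^n \times \Z_+$.

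Next, I would establish validity. By definition of $l(a^j(\sigma))$, for any $\alpha \in P$ we have $\langle a^j(\sigma), \alpha\rangle \ge l(a^j(\sigma))$ for each $j \in I$. Summing these inequalities over $j \in I$ gives
\begin{equation*}
\Bigl\langle \sum_{j \in I} a^j(\sigma),\, \alpha \Bigr\rangle \;\ge\; \sum_{j \in I} l(a^j(\sigma)),
\end{equation*}
so $P \subset H^+\bigl(\sum_{j\in I} a^j(\sigma), \sum_{j\in I} l(a^j(\sigma))\bigr)$, which is exactly validity.

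Finally, I would show the defined face equals $\gamma$. The inclusion $\gamma \subset H\bigl(\sum_{j \in I} a^j(\sigma), \sum_{j\in I} l(a^j(\sigma))\bigr) \cap P$ follows by summing the identities $\langle a^j(\sigma), \alpha\rangle = l(a^j(\sigma))$ that hold for $\alpha \in \gamma = \gamma(I,\sigma) = \bigcap_{j \in I} H(a^j(\sigma), l(a^j(\sigma))) \cap P$. For the reverse inclusion, suppose $\alpha \in P$ satisfies $\langle \sum_{j \in I} a^j(\sigma), \alpha\rangle = \sum_{j \in I} l(a^j(\sigma))$. Combined with the termwise inequalities $\langle a^j(\sigma), \alpha\rangle \ge l(a^j(\sigma))$, equality of the sums forces equality in each term, so $\alpha \in H(a^j(\sigma), l(a^j(\sigma)))$ for every $j \in I$; hence $\alpha \in \gamma(I,\sigma) = \gamma$.

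There is no real obstacle here: the argument is just the elementary fact that equality holds in a sum of nonnegative deficits $\langle a^j(\sigma),\alpha\rangle - l(a^j(\sigma)) \ge 0$ if and only if every deficit vanishes. The only thing one must be slightly careful about is tracking that the vectors $a^j(\sigma)$ lie in $\Z_+^n$ (not merely $\Z^n$), which is where the hypothesis $|\Sigma| = \R_+^n$ enters.
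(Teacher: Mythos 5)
Your proof is correct and follows essentially the same route as the paper's: validity via the definition of $l(\cdot)$ as a minimum, then the face-defining part from the observation that equality in a sum of termwise inequalities forces termwise equality. The paper presents the validity argument as a chain of equivalences and dismisses the face-defining part as "similarly shown," so your write-up merely makes explicit what the paper leaves to the reader (and adds the routine check that the pair lies in $\Z_+^n\times\Z_+$).
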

\begin{proof}
It follows from the following equivalences that
the pair in the lemma is valid for $P$. 
\begin{equation*}
\begin{split}
&\quad\quad\mbox{$P\subset  
H_+(a^j(\sigma),l(a^j(\sigma)))$ 
for any $j \in I$} \\
&\Longleftrightarrow
\mbox{If $\alpha\in  P$,  then 
$\langle a^j(\sigma),\alpha\rangle \geq l(a^j(\sigma))$ for any $j \in I$} \\
&\Longleftrightarrow
\mbox{If $\alpha\in  P$, then  
$\langle \sum_{j\in I}a^j(\sigma),\alpha\rangle \geq\sum_{j\in I}l(a^j(\sigma))$} \\
&\Longleftrightarrow
\mbox{$P\subset 
H(\sum_{j\in I}a^j(\sigma),\sum_{j\in I}l(a^j(\sigma)))$}.
\end{split}
\end{equation*}
Note that the second equivalence follows 
from the definition of $l(\cdot)$.
Moreover, it is similarly shown that the above pair in the lemma
defines the face $\gamma$, so we omit its proof.    
\end{proof}

\begin{lemma}
For any 
$(I,\sigma)\in{\mathcal P}(\{1,\ldots,n\})\times\Sigma^{(n)}$
$($satisfying $\gamma(I,\sigma)=\gamma)$,  
the subset
$\{k; a_k^j(\sigma)=0 
\mbox{\, for any $j\in I$}\}$ 
in 
$\{1,\ldots,n\}$ is equal to $V(\gamma)$ 
defined as in $(\ref{eqn:4.1})$.   
$($This means that the above subset is independent
of the chosen pair $(I,\sigma)$ satisfying  
$\gamma(I,\sigma)=\gamma$.$)$
\end{lemma}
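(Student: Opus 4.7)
The plan is to chain together two previously established results and then use the nonnegativity of coordinates of skeleton vectors. Write $S(I,\sigma) := \{k; a_k^j(\sigma)=0 \text{ for all } j\in I\}$ for the set appearing in the statement, so that what must be shown is $S(I,\sigma)=V(\gamma)$.

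First I would invoke Lemma~8.4, which tells us that the pair
\[
(A,L) := \Bigl(\sum_{j\in I} a^j(\sigma),\; \sum_{j\in I} l(a^j(\sigma))\Bigr) \in \Z_+^n\times\Z_+
\]
is valid for $P$ and defines the face $\gamma$. (Note that $A\in\Z_+^n$ because each $a^j(\sigma)$ lies in the cone $\sigma\subset|\Sigma|=\R_+^n$, so all entries $a_k^j(\sigma)$ are nonnegative integers.) Applying Lemma~4.4 to this particular valid pair defining $\gamma$ yields
\[
V(\gamma) = \{k; A_k = 0\} = \Bigl\{k;\; \sum_{j\in I} a_k^j(\sigma) = 0\Bigr\}.
\]

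The final step is the trivial observation that a sum of nonnegative reals vanishes iff every summand vanishes. Since $a_k^j(\sigma)\geq 0$ for all $j,k$, we get $\sum_{j\in I} a_k^j(\sigma) = 0$ if and only if $a_k^j(\sigma)=0$ for every $j\in I$. Therefore
\[
V(\gamma) = \Bigl\{k;\; a_k^j(\sigma) = 0 \text{ for all } j\in I\Bigr\} = S(I,\sigma),
\]
which is exactly the claim. The parenthetical independence of $S(I,\sigma)$ from the pair $(I,\sigma)$ satisfying $\gamma(I,\sigma)=\gamma$ is then automatic, since the right-hand side $V(\gamma)$ depends only on $\gamma$.

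There is essentially no obstacle here: the whole argument is a one-line reduction to Lemma~4.4 via Lemma~8.4, plus the elementary nonnegativity remark. The only point requiring any care is confirming that the constructed pair $(A,L)$ really has nonnegative integer entries (so that Lemma~4.4 applies in the stated form), but this is immediate from $a^j(\sigma)\in\sigma\subset\R_+^n$ and $l(a^j(\sigma))\in\Z_+$.
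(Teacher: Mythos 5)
Your proof is correct and follows exactly the same route as the paper, which simply cites Lemma~8.4 and Lemma~4.4; you have just spelled out the intermediate step that $\sum_{j\in I} a_k^j(\sigma)=0$ iff each summand vanishes, by the nonnegativity of the skeleton entries. Nothing further is needed.
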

\begin{proof}
This follows from Lemma~4.4 and Lemma~8.4. 
\end{proof}


Let us consider the following two subsets in $\R^n$. 
\begin{eqnarray}
&&
T_I(\R^n)=\{y\in\R^n; y_j=0 \mbox{ if $j\in I$}\} \,\,\,
\mbox{(as in (\ref{eqn:1.3}))},\nonumber\\
&&
T_I^*(\R^n):=\{y\in\R^n; y_j=0 \mbox{ if and only if $j\in I$}\}
\label{eqn:8.4} \\
&&\quad\quad\quad
(=\{y\in T_I(\R^n); y_j\neq 0 \mbox{ if $j\not\in I$}\}) 
\nonumber.
\end{eqnarray}
The following are important equivalent conditions 
of the compactness of a face.
\begin{proposition}
The following conditions are equivalent.
\begin{enumerate}
\item $\gamma$ is compact;
\item $\sum_{j\in I}a_k^j(\sigma)>0$
for $k=1,\ldots,n$;
\item $V(\gamma)=\emptyset$;
\item $\pi(\sigma)(T_{I}(\R^n))=0$;
\item $\pi(\sigma)(T_{I}^*(\R^n))=0$.
\end{enumerate}
\end{proposition}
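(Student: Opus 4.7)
The plan is to organize the equivalences as a cycle (i)$\Leftrightarrow$(iii), (iii)$\Leftrightarrow$(ii), (ii)$\Rightarrow$(iv)$\Rightarrow$(v)$\Rightarrow$(ii), drawing on the structural lemmas already established (especially Lemma~4.3 and Lemma~8.5) plus direct computation with the monomial map $\pi(\sigma)$. The core observation one uses throughout is that the skeleton vectors $a^1(\sigma),\ldots,a^n(\sigma)$ lie in $\R_+^n$, because $|\Sigma|=|\Sigma_0|=\R_+^n$, so all exponents $a_k^j(\sigma)$ are nonnegative integers.

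First, (i)$\Leftrightarrow$(iii) is just Lemma~4.3 applied to the face $\gamma$ of $P$, so no new work is needed. Next, to establish (ii)$\Leftrightarrow$(iii), I would invoke Lemma~8.5, which tells us that
\[
V(\gamma)=\{k;\ a^j_k(\sigma)=0\ \text{for every}\ j\in I\}.
\]
Since every $a^j_k(\sigma)\in\Z_+$, the condition $V(\gamma)=\emptyset$ says that for each $k$ at least one $j\in I$ has $a^j_k(\sigma)>0$, which (again by nonnegativity) is equivalent to $\sum_{j\in I}a^j_k(\sigma)>0$ for every $k$. That is exactly~(ii).

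Now I would handle the geometric part by direct calculation with formula~(\ref{eqn:7.4}). For (ii)$\Rightarrow$(iv): if $y\in T_I(\R^n)$ then $y_j=0$ for $j\in I$, and under the convention $0^0=1$ the $k$-th coordinate of $\pi(\sigma)(y)$ is forced to vanish as soon as there is some $j\in I$ with $a^j_k(\sigma)>0$; condition~(ii) guarantees this for every $k$, so $\pi(\sigma)(y)=0$. The inclusion $T_I^*(\R^n)\subset T_I(\R^n)$ gives (iv)$\Rightarrow$(v) trivially. Finally, for (v)$\Rightarrow$(ii), I would pick any $y\in T_I^*(\R^n)$ (so $y_j=0$ precisely for $j\in I$, and $y_j\neq 0$ otherwise). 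Then
\[
\pi(\sigma)(y)_k=\Bigl(\prod_{j\notin I}y_j^{a^j_k(\sigma)}\Bigr)\cdot\prod_{j\in I}y_j^{a^j_k(\sigma)},
\]
where the first product is nonzero; thus $\pi(\sigma)(y)_k=0$ if and only if some $j\in I$ satisfies $a^j_k(\sigma)>0$. Assuming (v), this must occur for every $k$, giving~(ii).

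None of the steps are deep; the only place one must be careful is the interpretation of $0^0=1$ in $\pi(\sigma)$, which is what makes the distinction between (iv) and (v) a genuine distinction worth checking (and also why the nonnegativity of the exponents is essential). Everything else is bookkeeping, and the heavy lifting has already been done in Lemma~4.3 and Lemma~8.5, so the proof should be quite short.
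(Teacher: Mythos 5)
Your proof is correct and takes essentially the same route as the paper: the equivalence of (i), (ii), (iii) is derived from Lemmas~4.3 and~8.5 (the latter encapsulates the content of Lemma~8.4 combined with Lemma~4.4, which the paper cites instead), and the equivalence with (iv), (v) comes from the explicit computation of $\pi(\sigma)\circ T_I$, which is precisely the paper's displayed formula. Your cyclic arrangement (ii)$\Rightarrow$(iv)$\Rightarrow$(v)$\Rightarrow$(ii) is only a cosmetic reorganization of the paper's (iii)$\Leftrightarrow$(iv)$\Leftrightarrow$(v).
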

\begin{proof}
The equivalence of three conditions 
(i),(ii),(iii) follows from Lemmas~4.2, 4.3 and 8.4.
An easy computation implies that 
$(x_1,\ldots,x_n)=
(\pi(\sigma)\circ T_I)(y_1,\ldots,y_n)$, 
where 
\begin{equation}
x_k:=\begin{cases}
\prod_{j=1}^n y_j^{a_k^j(\sigma)}=
\prod_{j\in J} y_j^{a_k^j(\sigma)}&
\quad \mbox{for $k\in V(\gamma)$}, \\
0& 
\quad \mbox{for $k\in W(\gamma)$}.
\end{cases}
\label{eqn:8.5}
\end{equation}
The equivalence of three conditions 
(iii), (iv), (v) follows from the equations 
in (\ref{eqn:8.5}). 
\end{proof}
\begin{lemma}
The following equality as the map from 
$\R^n$ to $\R^n$ holds:
\begin{equation}
\pi(\sigma)\circ T_I=T_{W(\gamma)}\circ \pi(\sigma).
\label{eqn:8.6}
\end{equation}
\end{lemma}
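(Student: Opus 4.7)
The plan is to verify the equality of maps coordinate by coordinate, by computing both compositions explicitly and invoking Lemma~8.5 to match up the relevant index sets.

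First, I would fix $y=(y_1,\ldots,y_n)\in\R^n$ and compute $(\pi(\sigma)\circ T_I)(y)$. Writing $z=T_I(y)$, so that $z_j=0$ for $j\in I$ and $z_j=y_j$ for $j\in J:=\{1,\ldots,n\}\setminus I$, the definition of $\pi(\sigma)$ in (\ref{eqn:7.4}) gives
\begin{equation*}
\bigl(\pi(\sigma)(z)\bigr)_k=\prod_{j\in J}y_j^{a_k^j(\sigma)}\cdot\prod_{j\in I}0^{a_k^j(\sigma)},
\end{equation*}
where we use the convention $0^0=1$. Since the skeleton vectors $a^j(\sigma)$ all lie in $|\Sigma|=\R_+^n$, each entry $a_k^j(\sigma)$ is a nonnegative integer, so the second product equals $0$ if there exists $j\in I$ with $a_k^j(\sigma)>0$, and equals $1$ if $a_k^j(\sigma)=0$ for every $j\in I$.

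Next, I would invoke Lemma~8.5 (noting the running assumption $\gamma(I,\sigma)=\gamma$): the set of indices $k$ with $a_k^j(\sigma)=0$ for all $j\in I$ is exactly $V(\gamma)$, and the complementary set is $W(\gamma)$. Therefore the calculation of the previous paragraph yields
\begin{equation*}
\bigl((\pi(\sigma)\circ T_I)(y)\bigr)_k
=\begin{cases}\prod_{j\in J}y_j^{a_k^j(\sigma)} & k\in V(\gamma),\\ 0 & k\in W(\gamma).\end{cases}
\end{equation*}
On the other hand, $(T_{W(\gamma)}\circ\pi(\sigma))(y)$ has $k$-th coordinate $0$ when $k\in W(\gamma)$ and $\prod_{j=1}^n y_j^{a_k^j(\sigma)}$ when $k\in V(\gamma)$; by the defining property of $V(\gamma)$, the exponents $a_k^j(\sigma)$ vanish for $j\in I$, so the product reduces to $\prod_{j\in J}y_j^{a_k^j(\sigma)}$.

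Comparing the two expressions coordinate by coordinate proves the identity. The only point requiring care is the treatment of the factor $\prod_{j\in I}0^{a_k^j(\sigma)}$ in $(\pi(\sigma)\circ T_I)(y)$, which pivots on the convention $0^0=1$ together with the nonnegativity of the skeleton entries; everything else is bookkeeping. I do not anticipate a genuine obstacle here, since Lemma~8.5 already does the real work of identifying the two index sets.
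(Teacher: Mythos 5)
Your proof is correct and follows essentially the same route as the paper: you re-derive the formula (\ref{eqn:8.5}) for $\pi(\sigma)\circ T_I$ (which the paper establishes in the proof of Proposition~8.6), compute $T_{W(\gamma)}\circ\pi(\sigma)$ coordinatewise, and match the two using Lemma~8.5 to identify the index sets $V(\gamma)$ and $W(\gamma)$. The paper's one-line proof simply cites (\ref{eqn:8.5}) and asks the reader to carry out the same computation of the right-hand side that you spell out.
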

\begin{proof}
This follows from (\ref{eqn:8.5}) and a computation of 
$T_{W(\gamma)}\circ \pi(\sigma)$.
\end{proof}

Hereafter we assume that 
$f$ belongs to the class 
$\hat{{\mathcal E}}(U)$ and set $P=\Gamma_+(f)$. 

\begin{lemma}
For any $\sigma\in\Sigma^{(n)}$, 
there exists a $C^{\infty}$ function $f_{\sigma}$ defined on 
the set $\pi(\sigma)^{-1}(U)$ such that 
$f_{\sigma}(0)\neq 0$ and 
\begin{equation}
f(\pi(\sigma)(y))=
\left(\prod_{j=1}^n y_j^{l(a^j(\sigma))}\right)f_{\sigma}(y)
\quad\quad \mbox{for $y\in\pi(\sigma)^{-1}(U)$.}
\label{eqn:8.7}
\end{equation}
\end{lemma}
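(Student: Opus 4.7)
My plan is to use the monomial decomposition of elements of $\hat{\mathcal E}(U)$ provided by Proposition 6.3 together with Lemma 6.4. Since $f \in \hat{\mathcal E}(U)$ with $P = \Gamma_+(f)$, we can write
\begin{equation*}
f(x) = \sum_{p \in S} x^p\, \psi_p(x), \qquad x \in U,
\end{equation*}
where $S \in \mathcal S[P]$ contains $\mathcal V(P)$, each $\psi_p \in C^{\infty}(U)$, and $\psi_v(0) \neq 0$ for every vertex $v \in \mathcal V(P)$. Pulling this back by $\pi(\sigma)$ and using (\ref{eqn:7.4}), each monomial becomes $x^p = \prod_{j=1}^n y_j^{\langle a^j(\sigma), p \rangle}$. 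Since $p \in S \subset P$, definition (\ref{eqn:7.1}) gives $\langle a^j(\sigma), p \rangle \geq l(a^j(\sigma))$ for every $j$, so we may factor out $\prod_j y_j^{l(a^j(\sigma))}$ and define
\begin{equation*}
f_{\sigma}(y) := \sum_{p \in S} \left(\prod_{j=1}^n y_j^{\langle a^j(\sigma), p \rangle - l(a^j(\sigma))}\right) \psi_p(\pi(\sigma)(y)).
\end{equation*}
All exponents above are nonnegative integers and $\pi(\sigma)$ is smooth, so $f_{\sigma}$ is $C^{\infty}$ on $\pi(\sigma)^{-1}(U)$ and satisfies (\ref{eqn:8.7}) by construction.

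The substantive step is to verify that $f_{\sigma}(0) \neq 0$. Since $\sigma \in \Sigma^{(n)}$ is $n$-dimensional, its skeleton $\{a^j(\sigma)\}$ is a basis of $\R^n$; in particular every coordinate row $(a_k^1(\sigma), \ldots, a_k^n(\sigma))$ is nonzero, so each $x_k$ vanishes at $y = 0$ and hence $\pi(\sigma)(0) = 0 \in U$. Evaluating the formula for $f_{\sigma}$ at $y = 0$, the $p$-th term survives if and only if every exponent $\langle a^j(\sigma), p \rangle - l(a^j(\sigma))$ is zero, i.e. $p \in \gamma(\{1,\ldots,n\}, \sigma)$. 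By Lemma 8.1 (ii) this face has dimension at most $0$, so it is either empty or a single vertex of $P$. To rule out emptiness I use that $\Sigma$ is a simplicial subdivision of $\Sigma_0$: the $n$-dimensional cone $\sigma$ lies inside some $n$-dimensional cone of $\Sigma_0$, which by (\ref{eqn:7.3}) and the correspondence $\gamma \leftrightarrow \overline{\gamma^*}$ must be of the form $\overline{v^*}$ for a vertex $v \in \mathcal V(P)$. Then every $a^j(\sigma)$ lies in $\overline{v^*}$, so $v \in H(a^j(\sigma), l(a^j(\sigma)))$ for each $j$, whence $v \in \gamma(\{1,\ldots,n\}, \sigma)$. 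Consequently $\gamma(\{1,\ldots,n\}, \sigma) = \{v\}$, and since $v \in \mathcal V(P) \subset S$, we get
\begin{equation*}
f_{\sigma}(0) = \psi_v(0) \neq 0.
\end{equation*}

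The main obstacle is the last identification: one must match the combinatorial face $\gamma(\{1,\ldots,n\}, \sigma)$ cut out by the hyperplanes associated with the skeleton of $\sigma$ to the unique vertex of $P$ determined by $\sigma$'s containing $n$-dimensional cone in the coarser fan $\Sigma_0$. Everything else is a direct computation: the decomposition from Proposition 6.3 together with the nonvanishing condition at vertices from Lemma 6.4 handles the structural side, and the fact that $p \in P$ forces $\langle a^j(\sigma), p\rangle \geq l(a^j(\sigma))$ is simply the definition of $l(a^j(\sigma))$.
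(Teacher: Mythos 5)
Your proof is correct and follows essentially the same route as the paper: both invoke Proposition~6.3 to write $f=\sum_{p\in S}x^p\psi_p$, pull back through $\pi(\sigma)$, factor out $\prod_j y_j^{l(a^j(\sigma))}$ using that $\langle a^j(\sigma),p\rangle\geq l(a^j(\sigma))$ for $p\in P$, and then identify $f_{\sigma}(0)$ with $\psi_{p(\sigma)}(0)\neq 0$ via Lemma~6.4. The only difference is that you spell out why $\gamma(\{1,\ldots,n\},\sigma)$ is a (nonempty) vertex of $\Gamma_+(f)$ by combining Lemma~8.1~(ii) with the inclusion of $\sigma$ in some maximal cone $\overline{v^*}$ of $\Sigma_0$, whereas the paper simply asserts this fact.
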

\begin{proof}
Let $y$ be in $\pi(\sigma)^{-1}(U)$.
Since $f$ belongs to the class $\hat{{\mathcal E}}(U)$,
$f$ can be expressed as in (\ref{eqn:6.1}) 
in Proposition~6.3.
Substituting $x=\pi(\sigma)(y)$ into (\ref{eqn:6.1}), 
we have
\begin{equation*}
f(\pi(\sigma)(y))=
\sum_{p\in S}
\left(\prod_{j=1}^n y_j^{\langle a^j(\sigma),p\rangle }\right)
\psi_p(\pi(\sigma)(y)).
\end{equation*}
Now, define 
\begin{equation}
f_{\sigma}(y):=
\sum_{p\in S}
\left(\prod_{j=1}^n y_j^{\langle a^j(\sigma),p\rangle -l(a^j(\sigma))}\right)
\psi_p(\pi(\sigma)(y)).
\label{eqn:8.8}
\end{equation}
Then we obtain the equation of the form (\ref{eqn:8.7}). 
Noticing 
$\langle a^j(\sigma),p\rangle -l(a^j(\sigma))\in\Z_+$ for all $j$, 
we see that $f_{\sigma}$ is smooth on $\pi(\sigma)^{-1}(U)$.
On the other hand, 
the face $\gamma(\{1,\ldots,n\},\sigma)$ becomes a vertex of
$\Gamma_+(f)$, 
which is denoted by $p(\sigma)$. 
Lemma~6.4 and (\ref{eqn:8.8}) imply that
$p(\sigma)\in S$ and
$f_{\sigma}(0)=\psi_{p(\sigma)}(0)\neq 0$.
\end{proof}
The following equation plays an important role
in the resolution of singularities and the analysis
in Section~9.
\begin{lemma}
\begin{equation}
f_{\gamma}(\pi(\sigma)(y))=
\left(\prod_{j=1}^n y_j^{l(a^j(\sigma))}\right)
f_{\sigma}(T_I(y))
\quad\quad \mbox{for $y\in\pi(\sigma)^{-1}(U)$.}
\label{eqn:8.9}
\end{equation}
\end{lemma}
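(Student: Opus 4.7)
The plan is to compute both sides of the desired identity by unwinding the explicit formulas for $f_\gamma$ and $f_\sigma$ available from Lemma~6.7 and equation (\ref{eqn:8.8}), and to verify they agree.

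First, I would write $f$ in the form (\ref{eqn:6.1}) of Proposition~6.3, with some finite set $S \subset \Gamma_+(f) \cap \Z_+^n$ and smooth coefficients $\psi_p$. From Lemma~6.7, this gives
\[
f_\gamma(x) = \sum_{p \in \gamma \cap S} x^p \, \psi_p(T_{W(\gamma)}(x)),
\]
and from the definition (\ref{eqn:8.8}) we have
\[
f_\sigma(y) = \sum_{p \in S} \left(\prod_{j=1}^n y_j^{\langle a^j(\sigma),p\rangle - l(a^j(\sigma))}\right) \psi_p(\pi(\sigma)(y)).
\]

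Next, I would substitute $x = \pi(\sigma)(y)$ into the formula for $f_\gamma$. Using (\ref{eqn:7.4}), the monomial $x^p$ becomes $\prod_{j=1}^n y_j^{\langle a^j(\sigma), p\rangle}$. The key observation is that for $p \in \gamma = \gamma(I,\sigma) = \bigcap_{j \in I} H(a^j(\sigma), l(a^j(\sigma))) \cap P$, we have $\langle a^j(\sigma), p\rangle = l(a^j(\sigma))$ for every $j \in I$. This allows me to factor out $\prod_{j=1}^n y_j^{l(a^j(\sigma))}$, leaving a remainder $\prod_{j \in J} y_j^{\langle a^j(\sigma),p\rangle - l(a^j(\sigma))}$ (where $J = \{1,\dots,n\}\setminus I$). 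For the $\psi_p$ factor, I would invoke Lemma~8.7 to rewrite $\psi_p(T_{W(\gamma)}(\pi(\sigma)(y))) = \psi_p(\pi(\sigma)(T_I(y)))$.

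Finally, I would evaluate $f_\sigma(T_I(y))$ directly. Since $(T_I(y))_j = 0$ for $j \in I$ and $(T_I(y))_j = y_j$ otherwise, a term in the sum defining $f_\sigma(T_I(y))$ survives if and only if $\langle a^j(\sigma), p\rangle - l(a^j(\sigma)) = 0$ for every $j \in I$ (recalling that $p \in P$ makes all these exponents nonnegative, by validity of the pairs). This is precisely the condition $p \in \gamma$. Therefore
\[
f_\sigma(T_I(y)) = \sum_{p \in \gamma \cap S} \left(\prod_{j \in J} y_j^{\langle a^j(\sigma),p\rangle - l(a^j(\sigma))}\right) \psi_p(\pi(\sigma)(T_I(y))),
\]
which matches the remainder computed in the previous step. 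Multiplying by $\prod_{j=1}^n y_j^{l(a^j(\sigma))}$ then yields (\ref{eqn:8.9}).

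The main point that requires care is the passage from ``$p$ lies in $\gamma$'' to the vanishing/non-vanishing dichotomy in $f_\sigma(T_I(y))$; this rests on the inequalities $\langle a^j(\sigma), p\rangle \geq l(a^j(\sigma))$ (which hold for all $p \in P$ and all $j$, by the validity of $(a^j(\sigma), l(a^j(\sigma)))$) combined with the defining equality characterizing $\gamma(I,\sigma)$. Once this bookkeeping is in place, the rest of the argument is essentially the chain rule for $\pi(\sigma) \circ T_I = T_{W(\gamma)} \circ \pi(\sigma)$ (Lemma~8.7), so I expect no further obstacles.
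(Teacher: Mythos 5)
Your proposal is correct and follows essentially the same route as the paper: expand $f_\gamma$ via Lemma~6.7 and $f_\sigma$ via (\ref{eqn:8.8}), factor using $\langle a^j(\sigma),p\rangle = l(a^j(\sigma))$ for $p\in\gamma$ and $j\in I$, and pass $T_I$ through $\pi(\sigma)$ via Lemma~8.7. You are if anything slightly more explicit than the paper about why the sum defining $f_\sigma(T_I(y))$ collapses to $p\in\gamma\cap S$ (the nonnegativity of the exponents and the $0^0=1$ dichotomy), a step the paper's equation (\ref{eqn:8.11}) asserts without comment.
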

\begin{proof}
Let $y$ be in $\pi(\sigma)^{-1}(U)$.
From Lemma~6.7, we have 
\begin{eqnarray}
&&f_{\gamma}(\pi(\sigma)(y))=
\sum_{p\in \gamma\cap S}
\left(\prod_{j=1}^n y_j^{\langle a^j(\sigma),p\rangle }\right)
\psi_p((T_{W(\gamma)}\circ\pi(\sigma))(y))\nonumber\\
&&
\quad\quad
=\left(\prod_{j\in I} y_j^{l(a^j(\sigma))}\right)
\sum_{p\in \gamma\cap S}
\left(\prod_{j\in J} y_j^{\langle a^j(\sigma),p\rangle }\right)
\psi_p((T_{W(\gamma)}\circ\pi(\sigma))(y)).
\label{eqn:8.10}
\end{eqnarray}
On the other hand, the definition of $f_{\sigma}$ 
in (\ref{eqn:8.8}) gives 
\begin{eqnarray}
&&f_{\sigma}(T_I(y))=
\sum_{p\in \gamma\cap S}
\left(\prod_{j\in J} y_j^{\langle a^j(\sigma),p\rangle -l(a^j(\sigma))}\right)
\psi_p((\pi(\sigma)\circ T_I)(y))\nonumber\\
&&\quad\quad
=\left(\prod_{j\in J} y_j^{l(a^j(\sigma))}\right)^{-1}
\sum_{p\in \gamma\cap S}
\left(\prod_{j\in J} y_j^{\langle a^j(\sigma),p\rangle }\right)
\psi_p((\pi(\sigma)\circ T_I)(y)).
\label{eqn:8.11}
\end{eqnarray}
Putting (\ref{eqn:8.6}), (\ref{eqn:8.10}), (\ref{eqn:8.11}) 
together, 
we get the equation in the lemma. 
\end{proof}

\subsection{Resolution of singularities}

The purpose of this subsection is 
to show the following theorem.

\begin{theorem}
Let
$f$ belong to the class $\hat{\mathcal E}(U)$, 
where $U$ is an open neighborhood of the origin 
in $\R^n$,
let $\Sigma$ be a simplicial subdivision of the fan 
$\Sigma_0$ associated with the Newton polyhedron 
$\Gamma_+(f)$ and 
let $\sigma$ be an $n$-dimensional cone in $\Sigma$, 
whose skeleton is $a^1(\sigma),\ldots,a^n(\sigma)\in \Z_+^n$.
Then 
there exists a $C^{\infty}$ function $f_{\sigma}$ 
defined on the set $\pi(\sigma)^{-1}(U)$ such that
$f_{\sigma}(0)\neq 0$ and 
\begin{equation}
 (f\circ \pi(\sigma))(y)
=\left(\prod_{j=1}^n y_j^{l(a^j(\sigma))}\right) 
f_{\sigma}(y)\quad \mbox{ for $y\in \pi(\sigma)^{-1}(U)$.}
\label{eqn:8.12}
\end{equation} 

Furthermore,
if $f$ is nondegenerate over $\R$ 
with respect to $\Gamma_+(f)$ and  
a subset $I\subset \{1,\ldots,n\}$ satisfies  
$\pi(\sigma)(T_I^*(\R^n))=0$,  
then the set $\{y\in T_I^*(\R^n);f_{\sigma}(y)=0\}$
is nonsingular 
$($the definition of $T_I^*(\R^n)$ was given in $(\ref{eqn:8.4}))$, 
i.e., 
the gradient of the restriction of the function 
$f_{\sigma}$ to $T_I^*(\R^n)$ does not vanish at the points of the set  
$\{y\in T_I^*(\R^n);f_{\sigma}(y)=0\}$. 
\end{theorem}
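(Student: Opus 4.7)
The plan is to handle the two assertions separately. The monomialization identity (\ref{eqn:8.12}) together with $f_{\sigma}(0)\neq 0$ is exactly the conclusion of Lemma~8.8, so the first paragraph of the proof will simply invoke that lemma; no further work is needed for this part.

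For the nonsingularity assertion, my first step is to use the equivalences of Proposition~8.6 to reinterpret the hypothesis $\pi(\sigma)(T_I^*(\R^n))=0$ as saying that $\gamma:=\gamma(I,\sigma)$ is a compact face of $\Gamma_+(f)$; in particular $f_{\gamma}$ is a (quasihomogeneous) polynomial which, by nondegeneracy, has nowhere-vanishing gradient on $U\cap(\R\setminus\{0\})^n$. Now fix $y^0\in T_I^*(\R^n)$ with $f_{\sigma}(y^0)=0$, form $\tilde y^0$ by replacing each component $y^0_j$, $j\in I$, with $1$, and set $x^0:=\pi(\sigma)(\tilde y^0)$. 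Since $y^0_j\neq 0$ for $j\notin I$, we have $x^0\in(\R\setminus\{0\})^n$. Substituting $\tilde y^0$ into Lemma~8.9 and using $T_I(\tilde y^0)=y^0$ yields
\begin{equation*}
f_{\gamma}(x^0)=\Bigl(\prod_{j\notin I}(y^0_j)^{l(a^j(\sigma))}\Bigr)\,f_{\sigma}(y^0)=0,
\end{equation*}
so $f_{\gamma}$ vanishes at the torus point $x^0$.

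Next I would compare the gradients of $g:=f_{\sigma}|_{T_I^*(\R^n)}$ and of $f_{\gamma}$. Parametrize $T_I^*(\R^n)$ near $y^0$ by $y^J=(y_j)_{j\notin I}$, and set $\Psi(y^J)_k:=\prod_{j\notin I}y_j^{a_k^j(\sigma)}$ and $m(y^J):=\prod_{j\notin I}y_j^{l(a^j(\sigma))}$. Restricting Lemma~8.9 to the slice where $y_j=1$ for $j\in I$ turns the identity there into
\begin{equation*}
f_{\gamma}(\Psi(y^J))=m(y^J)\,g(y^J).
\end{equation*}
Differentiating with respect to $y_k$ for $k\notin I$, invoking $g(y^0_J)=0$, and using $y^0_j\neq 0$ for $j\notin I$, one obtains
\begin{equation*}
\frac{\partial g}{\partial y_k}(y^0_J)=\frac{v_k}{y^0_k\,m(y^0_J)},\qquad
v_k:=\sum_{m=1}^n a_m^k(\sigma)\,x^0_m\,(\partial_m f_{\gamma})(x^0).
\end{equation*}

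Finally, I would derive a contradiction from the assumption that $v_k=0$ for every $k\notin I$. For $k\in I$ the quasihomogeneity of $f_{\gamma}$ supplied by Lemmas~5.4 and~6.8, combined with Euler's identity, gives $v_k=l(a^k(\sigma))\,f_{\gamma}(x^0)=0$ for free. If $v_k=0$ also holds for every $k\notin I$, then the linear system $\sum_m a_m^k(\sigma)w_m=0$ for $k=1,\ldots,n$, with $w_m:=x^0_m(\partial_m f_{\gamma})(x^0)$, admits only the trivial solution because $\{a^1(\sigma),\ldots,a^n(\sigma)\}$ is a basis of $\R^n$ by simpliciality of $\sigma$. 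Hence $w=0$, and since every $x^0_m\neq 0$ this forces $\nabla f_{\gamma}(x^0)=0$. Exploiting the quasihomogeneity of $f_{\gamma}$ one may rescale $x^0$ into $U\cap(\R\setminus\{0\})^n$ while preserving the vanishing of $\nabla f_{\gamma}$, contradicting the nondegeneracy hypothesis. Thus some $v_k$ with $k\notin I$ is nonzero, giving $\nabla_{y^J}g(y^0_J)\neq 0$, which is the required nonsingularity. The main obstacle I anticipate is the bookkeeping around Euler's identity and the quasihomogeneous rescaling at the end: it is precisely this step that converts the nondegeneracy hypothesis, which is phrased on the neighborhood $U$, into a contradiction at the torus point $x^0$ arising from $y^0$.
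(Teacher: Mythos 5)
Your proposal is correct and is essentially the paper's own argument: establish (\ref{eqn:8.12}) via Lemma~8.8, reduce the nonsingularity claim via Proposition~8.6 and Lemma~8.9 to a statement about the compact face $\gamma=\gamma(I,\sigma)$, then combine Euler's identity for the $a^j(\sigma)$ with $j\in I$, the derivative computation for $j\notin I$, and the invertibility of the unimodular skeleton matrix to force $\nabla f_\gamma$ to vanish at a torus point, contradicting nondegeneracy. The only cosmetic divergence is that you fix the $I$-coordinates to $1$ and push the resulting point back into $U$ by quasihomogeneous rescaling at the very end, whereas the paper keeps a free scaling parameter $r$ throughout (the set $U_I(b)$) so that the point stays in $U$ from the start and Lemma~8.9 applies verbatim; your version would need a sentence justifying that the restricted identity $f_\gamma(\Psi(y^J))=m(y^J)\,g(y^J)$ actually holds at the slice $y_j=1$, $j\in I$ (which need not lie in $\pi(\sigma)^{-1}(U)$) — this follows by applying Lemma~8.9 at small $r$, using the quasihomogeneity of the extended $f_\gamma$ from Lemma~5.4, and cancelling $r^L$, exactly the bookkeeping you flag as the delicate step.
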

Consider a toric variety $Y_{\Sigma}$ and 
the map $\pi:Y_{\Sigma}\to\R^n$, which 
are constructed as in Section~7 when 
$P=\Gamma_+(f)$. 
The above theorem shows that 
if $f\in\hat{\mathcal E}(U)$ satisfies the nondegeneracy
condition, then
this map $\pi:Y_{\Sigma}\to\R^n$ 
is a real resolution of singularities.
Indeed, the set $\pi(\sigma)^{-1}(0)$ is expressed 
as a disjoint union of $T_I^*(\R^n)$ for some subsets $I$ 
in $\{1,\ldots,n\}$.

\begin{remark}
Let $b$ be a point on $T_I^*(\R^n)$ satisfying 
$f_{\sigma}(b)=0$.   
By the implicit function theorem, 
there exist local coordinates around $b$ in which 
$f\circ \pi(\sigma)$ 
can be expressed in a normal crossing form. 
To be more specific, 
there exists a local diffeomorphism $\Phi$ defined around $b$
such that 
$y=\Phi(u)$ with $b=\Phi(b)$ and 
\begin{equation}
(f\circ\pi(\sigma)\circ \Phi)(u)=
(u_p-b)\prod_{j\in I}u_j^{l(a_j(\sigma))},
\label{eqn:8.13}
\end{equation}
where $y_j=u_j$ for $j\in I$ and 
$p\in \{1,\ldots,n\}\setminus I$. 
\end{remark}

\begin{proof}[Proof of Theorem 8.10]
Lemma~8.8 implies the existence of 
a $C^{\infty}$ function $f_{\sigma}$ 
satisfying (\ref{eqn:8.12})
with $f_{\sigma}(0)\neq 0$. 
Let us show the rest of the theorem.  

Let $\sigma$ be as in the theorem and 
$I$ a subset in $\{1,\ldots,n\}$ satisfying
$\pi(\sigma)(T_I^*(\R^n))=0$.
Note that $\gamma=\gamma(\sigma,I)$ is a compact face 
from Proposition~8.6. 

Since $\gamma=\gamma(\sigma,I)$, we have
$\gamma\subset H(a^j(\sigma),l(a^j(\sigma)))$ 
for $j\in I$ from Lemma~8.1. Thus, 
Lemma~6.8 implies 
\begin{equation*}
f_{\gamma}(t^{a_1^j(\sigma)}x_1,\ldots,t^{a_n^j(\sigma)}x_n)=
t^{l(a^j(\sigma))}f_{\gamma}(x)
\quad \mbox{ for $j\in I$}.
\end{equation*}
Taking the derivative in (\ref{eqn:8.9}) 
with respect to $t$ and putting $t=1$, 
we obtain Euler's identities:
\begin{equation}
\sum_{k=1}^n 
a_k^j(\sigma)x_k\frac{\partial f_{\gamma}}{\partial x_k}(x)
=
l(a^j(\sigma))f_{\gamma}(x)
\quad \mbox{ for $j\in I$}. 
\label{eqn:8.14}
\end{equation}
On the other hand, 
taking the partial derivative with respect to $y_j$
for $j\in J$ and putting $x=\pi(\sigma)(y)$, 
we have 
\begin{equation}
\begin{split}
&\sum_{k=1}^n 
a_k^j(\sigma)x_k\frac{\partial f_{\gamma}}{\partial x_k}(x)\\
&\quad=
\left(\prod_{i=1}^n y_i^{l(a^i(\sigma))}\right)
\left[
l(a^j(\sigma))(f_{\sigma}\circ T_I)(y)+
y_j\frac{\partial}{\partial y_j} (f_{\sigma}\circ T_I)(y)
\right]\quad
\mbox{ for $j\in J$.}
\end{split}
\label{eqn:8.15}
\end{equation}
Now, let us assume that there exists a point 
$b \in T_I^*(\R^n)$ such that 
$$
f_{\sigma}(b)=
\frac{\partial f_{\sigma}}{\partial y_j}(b)=0 
\quad \mbox{for $j\in J$}.
$$
Then the set
$
U_I(b)
=\{x\in U; x=\pi(\sigma)(T_I^{r}(b)) 
\mbox{ for $r\in\R\setminus\{0\}$} 
\}
$
is contained in $(\R\setminus \{0\})^n$. 
Since $f_{\gamma}$ vanishes on the set
$U_I(b)$ from (\ref{eqn:8.9}), 
the equations (\ref{eqn:8.14}),(\ref{eqn:8.15}) give 
\begin{equation}
\sum_{k=1}^n 
a_k^j(\sigma)x_k\frac{\partial f_{\gamma}}{\partial x_k}(x)
=0 \quad \mbox{ for $x\in U_I(b)$, \, $j=1,\ldots,n$.}
\label{eqn:8.16}
\end{equation}
Since the determinant of the $n\times n$ matrix 
$(a_k^j(\sigma))_{1\leq j,k\leq n}$ is equal to 
$1$ or $-1$, 
this matrix is invertible.
Therefore, we have 
$$
\frac{\partial f_{\gamma}}{\partial x_k}(x)=0 
\quad 
\mbox{ for $x\in U_I(b)$, \, $k=1,\ldots,n$},
$$
which is a contradiction to the nondegeneracy condition 
of $f$ in (\ref{eqn:3.2}).  
\end{proof}


\section{Poles of local zeta functions}

Let $U$ be an open neighborhood of the origin. 
Throughout this section, 
the functions $f$, $\varphi$ always satisfy 
the conditions (A), (B) 
in the beginning of Section 3. 

We investigate the properties of 
poles of the functions:
\begin{equation}
Z_{\pm}(s;\varphi):=\int_{\R^n} f(x)_{\pm}^s \varphi(x)dx, 
\label{eqn:9.1}
\end{equation}
where $f(x)_+=\max\{f(x),0\}$,
$f(x)_-=\max\{-f(x),0\}$ and the {\it local zeta function}:
\begin{equation}
Z(s;\varphi)=\int_{\R^n} |f(x)|^s \varphi(x)dx.
\label{eqn:9.2}
\end{equation}
Note that the above functions have a simple relationship:
$Z(s;\varphi)=Z_+(s;\varphi)+Z_-(s;\varphi)$. 
Since 
$Z_{\pm}(s;\varphi)$ can be expressed as 
\begin{equation}
Z_{\pm}(s;\varphi)
=\sum_{\theta\in\{-1,1\}^n}
\int_{\R_+^n}
f(\theta x)_{\pm}^s \varphi(\theta x)
dx,
\label{eqn:9.3}
\end{equation}
where $\theta x=(\theta_1 x_1,\ldots,\theta_n x_n)$,
we substantially investigate the properties
of the functions:
\begin{equation}
\tilde{Z}_{\pm}(s;\varphi):=
\int_{\R_+^n} f(x)_{\pm}^s \varphi(x)dx.
\label{eqn:9.4}
\end{equation}

It is easy to see that the above functions 
are holomorphic functions in the region 
${\rm Re} (s)>0$.  
For the moment, 
suppose that $f$ is real analytic near the origin. 
It is known (c.f.  \cite{jea70},\cite{mal74}) that 
if the support of $\varphi$ is sufficiently small, 
then the functions $Z_{\pm}(s;\varphi)$ and 
$Z(s;\varphi)$ can be analytically 
continued to the complex plane as 
meromorphic functions and their poles belong 
to finitely many arithmetic progressions 
constructed from negative rational numbers.
(In this section, this kind of process on 
analytic extension often appears.  
We denote by the same symbols   
these extended meromorphic functions 
defined on the complex plane.)
More precisely, Varchenko \cite{var76} describes
the positions of candidate poles of 
these functions and their orders 
by using the theory of toric varieties
based on the geometry of Newton polyhedra. 
His works have been deeply developed 
in \cite{ds89},\cite{ds92},\cite{dls97},\cite{dns05}.

The purpose of this section is to generalize 
the above Varchenko's results to the case 
that the function $f$ belongs to the class $\hat{\mathcal E}(U)$.
The results in this section need the following assumption 
stated as in Section~3. 

\begin{enumerate}
\item[(C)] 
$f$ belongs to the class 
$\hat{\mathcal E}(U)$ and 
is nondegenerate over $\R$ 
with respect to its Newton polyhedron. 
\end{enumerate}

In this section, 
we use the following notation.

\begin{itemize}
\item
$\Sigma_0$ is the fan associated with 
$\Gamma_+(f)$; 
\item
$\Sigma$ is a simplicial subdivision of $\Sigma_0$;
\item
$(Y_{\Sigma},\pi)$ is the real resolution associated 
with $\Sigma$; 
\item 
$a^1(\sigma),\ldots,a^n(\sigma)$ is the skeleton
of $\sigma\in\Sigma^{(n)}$, ordered once and for all;
\item
$J_{\pi}(y)$ is the Jacobian of the mapping of $\pi$.
\end{itemize}

\subsection{Candidate poles}
First, let us state our results on the positions 
and the orders of candidate poles of the functions
$Z_{\pm}(s;\varphi)$, $Z(s;\varphi)$. 
\begin{theorem}
Suppose that $f$ satisfies the condition $(C)$.
If the support of $\varphi$ is
contained in a sufficiently small neighborhood 
of the origin, 
then 
the functions $Z_{\pm}(s;\varphi)$ and $Z(s;\varphi)$ 
can be analytically 
continued to the complex plane as 
meromorphic functions,
which are also denoted by the same symbols,
and 
their poles are contained in the set 
\begin{equation}
\left\{
-\frac{\langle a\rangle +\nu}{l(a)}
;\,\, \nu\in\Z_+,\,\, a\in\tilde{\Sigma}^{(1)}
\right\}
\cup (-\N),
\label{eqn:9.5}
\end{equation}
where $l(a)$ is as in $(\ref{eqn:7.1})$ with
$P=\Gamma_+(f)$ and 
$\tilde{\Sigma}^{(1)}=\{a\in\Sigma^{(1)};l(a)>0\}$. 
Moreover, 
the largest element of the first set 
in $(\ref{eqn:9.5})$ is $-1/d(f)$. 
When $Z_{\pm}(s;\varphi)$ and $Z(s;\varphi)$ have poles  
at $s=-1/d(f)$,  
their orders are at most
\begin{equation*}
\begin{cases}
m(f)& \quad \mbox{if $1/d(f,\varphi)$ 
is not an integer}, \\
\min\{m(f)+1, n\}&
\quad \mbox{otherwise}.
\end{cases}
\end{equation*} 
\end{theorem}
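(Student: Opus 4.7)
The plan is to reduce the study of $Z_{\pm}(s;\varphi)$ and $Z(s;\varphi)$ to explicit monomial integrals via the toric resolution constructed in Theorem~8.10. First, I would use the decomposition (\ref{eqn:9.3})--(\ref{eqn:9.4}) to work with $\tilde Z_{\pm}(s;\varphi)$ on the positive octant, and then pull back through $\pi(\sigma)$ on each chart $\R^n(\sigma)$. By Theorem~8.10 and Lemma~7.2, the pulled-back integrand takes the form
\[
\prod_{j=1}^n |y_j|^{s\, l(a^j(\sigma)) + \langle a^j(\sigma)\rangle - 1}\, (f_\sigma(y))_{\pm}^s\, (\varphi\circ\pi(\sigma))(y),
\]
where $f_\sigma\in C^\infty(\pi(\sigma)^{-1}(U))$ and $f_\sigma(0)\neq 0$. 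Since $\pi(\sigma)$ is proper and $\mathrm{Supp}(\varphi)$ is small, a partition of unity on $\pi^{-1}(\mathrm{Supp}(\varphi))$ subordinate to the charts reduces the problem to finitely many integrals with compactly supported smooth data.

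Next, $f_\sigma$ can still vanish at points of some $T_I^\ast(\R^n)$ with $\pi(\sigma)(T_I^\ast(\R^n))=0$. Here the nondegeneracy half of Theorem~8.10 applies: the zero set of $f_\sigma|_{T_I^\ast(\R^n)}$ is nonsingular. By the implicit function theorem, as already indicated in Remark~8.11, near each such bad point there exist local coordinates $u$ in which $f\circ\pi(\sigma)$ takes the normal crossing form $(u_p - b)\prod_{j\in I}u_j^{l(a^j(\sigma))}$. Away from this bad locus, $f_\sigma$ is bounded away from zero, so $(f_\sigma)_{\pm}^s$ contributes only an entire factor. Thus, up to partition of unity, the integrand on each local piece is a product of monomial factors in the coordinates $y_j$ (or $u_j$) times a smooth compactly supported amplitude, possibly multiplied by a single factor $(u_p - b)^s$.

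The meromorphic continuation now follows by applying the classical one-variable identity $\int_0^\delta u^{cs+d-1}\psi(u)\,du$ variable by variable. Each factor $|y_j|^{s\, l(a^j(\sigma)) + \langle a^j(\sigma)\rangle - 1}$ with $l(a^j(\sigma))>0$ contributes simple poles at the arithmetic progression $\{-(\langle a^j(\sigma)\rangle + \nu)/l(a^j(\sigma));\nu\in\Z_+\}$, factors with $l(a^j(\sigma))=0$ are entire in $s$, and each $(u_p - b)^s$ factor contributes poles at $-\N$. Taking the union over all $\sigma\in\Sigma^{(n)}$ and over the vectors $a^j(\sigma)$ of the skeleton, and using that these vectors exhaust $\Sigma^{(1)}$, the pole set is contained in (\ref{eqn:9.5}). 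The same argument applies to $Z(s;\varphi)=Z_+(s;\varphi)+Z_-(s;\varphi)$.

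Finally, identifying $-1/d(f)$ as the largest element of the first set in (\ref{eqn:9.5}) reduces to the convex-geometric fact that $\min_{a\in\tilde\Sigma^{(1)}}\langle a\rangle/l(a)=1/d(f)$, with the minimum attained precisely on the edges of $\Sigma$ lying in $\overline{\tau_\ast^\ast}$; this follows from the definition of $d(f)$ and of the principal face $\tau_\ast$. For the order bound at $s=-1/d(f)$, on each chart $\R^n(\sigma)$ the order of pole contributed is at most the number of skeleton vectors $a^j(\sigma)$ on which the minimum is attained, which is bounded by $\dim\overline{\tau_\ast^\ast}=m(f)$; in the case $1/d(f)\in\N$ the extra factor $(u_p - b)^s$ arising from the normal crossing reduction can raise the order by one, giving $\min\{m(f)+1,n\}$. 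The main obstacle, I expect, is the careful bookkeeping in the normal-crossing step: one must verify that the extra linear factor $(u_p - b)$ produced by the implicit function theorem contributes poles only at $-\N$ and does not combine with the monomial factors to inflate the order of the pole at $-1/d(f)$ beyond the stated bound.
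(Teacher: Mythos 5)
Your proposal follows essentially the same route as the paper's proof: pull back through the toric resolution of Theorem~8.10, use a partition of unity on the blown-up space to localize, treat points where $f_\sigma$ vanishes via the normal-crossing form of Remark~8.11, then apply a variable-by-variable meromorphic continuation lemma (the paper's Lemma~9.2), and finally identify $-1/d(f)$ and bound the pole order there by counting skeleton vectors of $\sigma$ on which $-\langle a\rangle/l(a)$ is maximal (the paper's Lemma~9.3, $m(f)=\max_\sigma\card A(\sigma)$). The ``obstacle'' you flag at the end---whether the extra $(u_p-b)^s$ factor inflates the order at $-1/d(f)$---is exactly what the paper tracks in its Step~5 table, and your own argument already handles it correctly: it adds one only when $-1/d(f)\in-\N$, and the total is capped at $n$ because only $n$ variables occur in any chart.
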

\begin{proof}
First, let us show that 
the above assertions also hold in the case of 
the functions $\tilde{Z}_{\pm}(s;\varphi)$
in (\ref{eqn:9.4}).

\underline{\it Step 1.}\,\, 
({\it Decompositions of $\tilde{Z}_{\pm}(s;\varphi)$}.) 
\quad 
For the moment, 
we assume that $s\in\C$ satisfies ${\rm Re}(s)>0$. 
By using the mapping $x=\pi(y)$, 
$\tilde{Z}_{\pm}(s;\varphi)$ are expressed as 
\begin{eqnarray*}
&&
\tilde{Z}_{\pm}(s;\varphi)=\int_{\R_+^n}f(x)_{\pm}^s \varphi(x)dx \\
&&
\quad 
=\int_{\tilde{Y}_{\Sigma}} 
((f\circ\pi)(y))_{\pm}^s (\varphi\circ\pi)(y) 
|J_{\pi}(y)|dy, 
\end{eqnarray*}
where $\tilde{Y}_{\Sigma}:=Y_{\Sigma}\cap \pi^{-1}(\R_+^n)$
and 
$dy$ is a volume element in $Y_{\Sigma}$.    
It is easy to see that 
there exists a set of $C^{\infty}_0$ functions 
$\{\chi_{\sigma}:Y_{\Sigma} \to\R_+; \sigma\in\Sigma^{(n)}\}$ 
satisfying 
the following properties:
\begin{itemize}
\item 
For each $\sigma\in\Sigma^{(n)}$, 
the support of the function $\chi_{\sigma}$ is contained 
in $\R^n(\sigma)$ and 
$\chi_{\sigma}$ identically equals one 
in some neighborhood of the origin. 
\item 
$\sum_{\sigma\in\Sigma^{(n)}}\chi_{\sigma}\equiv 1$ 
on the support of 
$\chi\circ\pi$.  
\end{itemize}
Applying Theorem~8.10 and Lemmas~7.1 and 7.2, we have 
$$
\tilde{Z}_{\pm}(s;\varphi)=\sum_{\sigma\in\Sigma^{(n)}} 
Z_{\pm}^{(\sigma)}(s)$$ 
with  
\begin{equation}
\begin{split}
&Z_{\pm}^{(\sigma)}(s)
=\int_{\R_+^n} ((f\circ\pi(\sigma))(y))_{\pm}^s 
(\varphi\circ\pi(\sigma))(y) 
\chi_{\sigma}(y)|J_{\pi(\sigma)}(y)|dy \\
&
\quad 
=\int_{\R_+^n} 
\left(
\prod_{j=1}^n y_j^{l(a^j(\sigma))}f_{\sigma}(y)
\right)_{\pm}^s 
\left|
\prod_{j=1}^n y_j^{\langle a^j(\sigma)\rangle -1}
\right|\varphi_{\sigma}(y)dy, 
\end{split}
\label{eqn:9.6}
\end{equation}
where 
$\varphi_{\sigma}(y)=
(\varphi\circ\pi(\sigma))(y)\chi_{\sigma}(y)$.

Consider each function $Z^{(\sigma)}_{\pm}(s)$ 
for $\sigma\in\Sigma^{(n)}$. 
We easily see the existence of finite sets of 
$C^{\infty}_0$ functions 
$\{\psi_k:\R^n\to\R_+\}$ and 
$\{\eta_l:\R^n\to\R_+\}$ satisfying the following conditions. 
\begin{itemize}
\item 
The supports of $\psi_k$ and $\eta_l$ are sufficiently small and 
$\sum_k \psi_k + \sum_l \eta_l \equiv 1$ 
on the support of $\varphi_{\sigma}$.
\item 
For each $k$, 
$f_{\sigma}$ is always positive or negative on the support of $\psi_k$. 
\item 
For each $l$, the support of $\eta_l$ intersects the set 
$\{y\in {\rm Supp}(\varphi_{\sigma});f_{\sigma}(y)=0\}$.
\item
The union of the support of $\eta_l$ for all $l$ contains the set 
$\{y\in {\rm Supp}(\varphi_{\sigma});f_{\sigma}(y)=0\}$.
\end{itemize}

By using the functions $\psi_k$ and $\eta_l$, we have
\begin{equation}
Z^{(\sigma)}_{\pm}(s)=
\sum_k I_{\sigma,\pm}^{(k)}(s)+
\sum_l J_{\sigma,\pm}^{(l)}(s),
\label{eqn:9.7}
\end{equation}
with
\begin{equation}
\begin{split}
&
I^{(k)}_{\sigma,\pm}(s)=\int_{\R_+^n} 
\left(
\prod_{j=1}^n 
y_j^{l(a^j(\sigma))}f_{\sigma}(y)
\right)_{\pm}^s 
\left|
\prod_{j=1}^n y_j^{\langle a^j(\sigma)\rangle -1}
\right|\tilde{\psi}_k(y)dy, 
\\
&
J^{(l)}_{\sigma,\pm}(s)
=\int_{\R_+^n} 
\left(
\prod_{j=1}^n y_j^{l(a^j(\sigma))}f_{\sigma}(y)
\right)_{\pm}^s 
\left|
\prod_{j=1}^n y_j^{\langle a^j(\sigma)\rangle -1}
\right|\tilde{\eta}_l(y)dy, 
\end{split}
\label{eqn:9.8}
\end{equation}
where $\tilde{\psi}_k(y)=\varphi_{\sigma}(y)\psi_k(y)$ and 
$\tilde{\eta}_l(y)=\varphi_{\sigma}(y)\eta_l(y)$. 
If the set 
$\{y\in {\rm Supp}(\varphi_{\sigma});f_{\sigma}(y)=0\}$
is empty, then the functions $J^{(l)}_{\sigma,\pm}(s)$ 
do not appear. 

From the viewpoint of the properties of singularities, 
we divide the functions $\tilde{Z}_{\pm}(s;\varphi)$ as 
$\tilde{Z}_{\pm}(s;\varphi)=I_{\pm}(s)+J_{\pm}(s),$ 
with
\begin{equation}
I_{\pm}(s)=\sum_{\sigma\in\Sigma^{(n)}} \sum_k I^{(k)}_{\sigma,\pm}(s),
\quad\,\, 
J_{\pm}(s)=\sum_{\sigma\in\Sigma^{(n)}} \sum_l J^{(l)}_{\sigma,\pm}(s).
\label{eqn:9.9}
\end{equation} 
\underline{\it Step 2.} 
({\it Poles of $I_{\pm}(s)$}.)
\quad
Let us consider the functions $I^{(k)}_{\sigma,\pm}(s)$. 
An easy computation gives 
\begin{equation}
I^{(k)}_{\sigma,\pm}(s)
=
\int_{\R_+^n}
\left(
\prod_{j=1}^n 
y_j^
{l(a^j(\sigma))s+\langle  a^j(\sigma)\rangle -1}
\right)
f_{\sigma}(y)_{\pm}^s\tilde{\psi}_{k}(y)dy.
\label{eqn:9.10}
\end{equation}

The following lemma is useful for analyzing 
the poles of integrals of the above form. 
\begin{lemma}[\cite{gs64},\cite{agv88}]
Let $\psi(y_1,\ldots,y_n;\mu)$ be a $C^{\infty}_0$ function 
of $y$ on $\R^n$ that is an entire function 
of the parameter $\mu\in\C$. 
Then the function 
$$
L(\tau_1,\ldots,\tau_n;\mu)=\int_{\R_+^n}
\left(\prod_{j=1}^n y_j^{\tau_j}\right)
\psi(y_1,\ldots,y_n;\mu) dy_1\cdots dy_n
$$
can be analytically continued at all the complex values of 
$\tau_1,\ldots,\tau_n$ and $\mu$ as a meromorphic function. 
Moreover all its poles are simple and lie on 
$\tau_j=-1,-2,\ldots$ for $j=1,\ldots,n$. 
\end{lemma}
\begin{proof}[Proof of Lemma~9.2]
The lemma is easily obtained by the integration by parts 
(see \cite{gs64},\cite{agv88}).
\end{proof}
By applying Lemma~9.2 to (\ref{eqn:9.10}), 
each $I_{\sigma,\pm}^{(k)}(s)$ can be analytically
continued to the complex plane as a meromorphic function
and their poles are contained in the set 
\begin{equation}
\left\{
-\frac{\langle a^j(\sigma)\rangle +\nu}{l(a^j(\sigma))};
\nu\in\Z_+, j\in B({\sigma})
\right\},
\label{eqn:9.11}
\end{equation}
where 
\begin{equation}
B(\sigma):=\{j;l(a^j(\sigma))\neq 0\}
\subset\{1,\ldots,n\}.
\label{eqn:9.12}
\end{equation}
From (\ref{eqn:9.9}),
$I_{\pm}(s)$ also become meromorphic functions on $\C$ 
and 
their poles are contained in the union of the sets 
(\ref{eqn:9.11})
for all $\sigma\in\Sigma^{(n)}$.

\underline{\it Step 3.} 
({\it Poles of $J_{\pm}(s)$}.) \quad
Let us consider the functions $J^{(l)}_{\sigma,\pm}(s)$.  
By applying Theorem~8.10 
and changing the integral variables as in Remark~8.11, 
$J^{(l)}_{\sigma,\pm}(s)$ can be expressed as follows.
\begin{equation*}
\begin{split}
&J^{(l)}_{\sigma,\pm}(s)=\\
&\int_{\R_+^n} 
\left(
(y_p-b)
\prod_{j \in B_l(\sigma)} y_j^{l(a^j(\sigma))}
\right)^s_{\pm} 
\left|
\prod_{j \in B_l(\sigma)} y_j^{\langle a^j(\sigma)\rangle -1}
\right|\hat{\eta}_l(y_1,\ldots,y_p-b,\ldots,y_n)dy, 
\end{split}
\label{eqn:9.}
\end{equation*}
where $b>0$, 
$B_l(\sigma)\subsetneq\{1,\ldots,n\}$,
$p\in\{1,\ldots,n\}\setminus B_l(\sigma)$ and 
$\hat{\eta}_l\in C^{\infty}_0(\R^n)$ 
with $\hat{\eta}_l(0)\neq 0$.
In a similar fashion to the case of 
$I_{\sigma,\pm}^{(k)}(s)$, we have
\begin{equation}
J^{(l)}_{\sigma,\pm}(s)=
\int_{\R_+^n}
\left(
y_p^s
\prod_{j\in B_l(\sigma)}
y_j^
{l(a^j(\sigma))s+\langle  a^j(\sigma)\rangle -1}
\right)
\hat{\eta}_{l}(y_1,\ldots,\pm y_p,\ldots,y_n)dy.
\label{eqn:9.13}
\end{equation}

By applying Lemma~9.2 to (\ref{eqn:9.13}), 
each $J_{\sigma,\pm}^{(l)}(s)$ can be analytically
continued to the complex plane as a meromorphic function
and their poles are contained in the set 
\begin{equation}
\left\{
-\frac{\langle a^j(\sigma)\rangle +\nu}{l(a^j(\sigma))};
\nu\in\Z_+, j\in \tilde{B}_l(\sigma) 
\right\}\cup(-\N), 
\label{eqn:9.14}
\end{equation}
where $\tilde{B}_l(\sigma)=
\{j\in B_l(\sigma);l(a^j(\sigma))\neq 0\}$.
The necessity of the set $(-\N)$ in (\ref{eqn:9.13}) 
follows from the existence of  
$y_p^s$ in (\ref{eqn:9.13}). 
We remark that  
$y_j^
{l(a^j(\sigma))s+\langle  a^j(\sigma)\rangle -1}$ 
may also induce the poles on $(-\N)$. 
From (\ref{eqn:9.9}), 
$J_{\pm}(s)$ also become meromorphic functions on $\C$
and 
their poles are contained in the union of the sets 
(\ref{eqn:9.14})
for all $\sigma\in\Sigma^{(n)}$.

Now, in order to investigate properties of 
the {\it first} poles of $Z_{\pm}(s)$, we define
\begin{equation}
\tilde{\beta}(f)=\max\left\{
-\dfrac{\langle a\rangle }{l(a)}; 
a\in\tilde{\Sigma}^{(1)} 
\right\}.
\label{eqn:9.15}
\end{equation}

\underline{\it Step 4.}
({\it Geometrical meanings of $\tilde{\beta}(f)$}.)
\quad
Let us 
consider geometrical meanings of 
the quantity $\tilde{\beta}(f)$. 
For $a \in \Sigma^{(1)}$, 
we denote by $q(a)$ 
the point of the intersection of the hyperplane 
$H(a,l(a))$ with the line $\{(t,\ldots,t)\in\R_+^n;t>0\}$, 
where $H(\cdot,\cdot)$ is as in (\ref{eqn:2.1}). 
Then it is easy to see 
$q(a)=(l(a)/\langle a\rangle ,\ldots,l(a)/\langle a\rangle )$. 
Roughly speaking,  
the fact that 
the value of $-\langle a\rangle /l(a)$ is large
means that the point 
$q(a)$ is far from the origin. 
To be more specific, we have the following equivalences: 
For $a\in\tilde{\Sigma}^{(1)}$,  
\begin{equation}
\tilde{\beta}(f)=-\frac{\langle a\rangle }{l(a)} \, \Longleftrightarrow \,
q_*=q(a) \, \Longleftrightarrow \,
q_*\in H(a,l(a)). 
\label{eqn:9.16}
\end{equation}
(The definition of the point $q_*$ was given in Section~2.2.) 
Thus, it easily follows from the definition of $d(\cdot)$ that 
$\tilde{\beta}(f)=-1/d(f)$. 

\underline{\it Step 5.}
({\it Orders of the poles at $\tilde{\beta}(f)$}.)
\quad
For $\sigma\in\Sigma^{(n)}$, let  
$$
A(\sigma)=\left\{
j\in B(\sigma);
\tilde{\beta}(f)=-\frac{\langle a^j(\sigma)\rangle }{l(a^j(\sigma))} 
\right\}\subset\{1,\ldots,n\},
$$
where $B(\sigma)$ is as in (\ref{eqn:9.12}).
From (\ref{eqn:9.9}), 
it suffices to analyze the poles of 
$I_{\sigma,\pm}^{(k)}(s)$ and
$J_{\sigma,\pm}^{(l)}(s)$. 
When these functions have poles at $s=\tilde{\beta}(f)$,
we see the upper bounds of orders of their poles 
at $s=\tilde{\beta}(f)$ 
as follows 
by applying Lemma~9.2 to the integrals 
(\ref{eqn:9.10}),(\ref{eqn:9.13}). 
\begin{center}
\begin{tabular}{l|l} \hline
\textit{$I_{\sigma,\pm}^{(k)}(s)$} & 
       \quad $\card A(\sigma)$ \\ \hline
\textit{$J_{\sigma,\pm}^{(l)}(s)$} & 
       $\min\{\card A(\sigma),n-1\}$ if $\tilde{\beta}(f)\not\in (-\N)$\\
             & $\min\{\card A(\sigma)+1,n\}$ if $\tilde{\beta}(f)\in (-\N)$ 
\\ \hline
\end{tabular}
\end{center}
From the above table, 
in order to obtain the estimates of the orders of
poles in the theorem, 
it suffices to show the following. 
(Here, we need the inequality ``$\leq$'' only
in the lemma below. The equality 
will be needed in Section~9.3.)
\begin{lemma}
$m(f)=
\max\left\{\card A(\sigma)
;\sigma\in\Sigma^{(n)}
\right\}.$
\end{lemma}
\begin{proof}[Proof of Lemma~9.3]
Recall $m(f):=n-\dim(\tau_*)$. 
From the definition of $A(\sigma)$
and (\ref{eqn:9.15}), we have 
\begin{eqnarray*}
&&A(\sigma)
= \{j ; q_*\in H(a^j(\sigma),l(a^j(\sigma)))\} \\
&& \quad=\{j ; 
\tau_*\subset H(a^j(\sigma),l(a^j(\sigma)))\}=I(\tau_*,\sigma). 
\end{eqnarray*}
Here $\tau_*$ is the principal face of $\Gamma_+(f)$, i.e.,
its relative interior contains the point $q_*$, and
$I(\cdot,\cdot)$ is as in (\ref{eqn:8.2}).  
Lemma~8.1 implies that 
$\dim(\tau_*)
\leq n-\card I(\tau_*,\sigma)
= n-\card A(\sigma)$ for any 
$\sigma\in\Sigma^{(n)}$.
On the other hand, Lemma~8.2 implies that 
there exists $\sigma\in\Sigma^{(n)}$ such that 
$\dim (\tau_*)=n-\card A(\sigma)$. 
Since the codimension of $\tau_*$ is $m(f)$, 
we obtain the equality in the lemma.  
\end{proof}

Since $\tilde{Z}_{\pm}(s)=I_{\pm}(s)+J_{\pm}(s)$,
we see that the poles of $\tilde{Z}_{\pm}(s)$ 
have the same properties as in the theorem. 
Finally, 
considering 
the relationships: (\ref{eqn:9.3}) and 
$Z(s)=Z_{+}(s)+Z_{-}(s)$, 
we obtain the theorem. 
\end{proof} 

\subsection{Poles of $J_{\pm}(s)$ on negative integers}

We consider the poles of the functions
$J_{\pm}(s)$ at negative integers in more detail. 

The following lemma is useful 
for computing the coefficients of 
the Laurent expansion explicitly. 
\begin{lemma}
Let $\psi$ be a $C_0^{\infty}$ function on $\R$ and 
$k\in\N$. 
Then 
$$
\lim_{s\to -k} (s+k)
\int_{0}^{\infty}
y^{s} \psi(y)dy 
=
\frac{1}{(k-1)!}\psi^{(k-1)}(0). 
$$
In particular, 
$$
\lim_{s\to -1} (s+1)
\int_{0}^{\infty}
y^s \psi(y)dy 
=\psi(0). 
$$
\end{lemma}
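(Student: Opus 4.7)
The plan is to give a meromorphic continuation of
$$F(s) := \int_0^\infty y^s \psi(y)\,dy$$
from the half-plane $\Re(s) > -1$ (where the integral converges absolutely) to a neighborhood of $s = -k$, and then read off the residue at $s = -k$ directly. The cleanest route, which accomplishes both at once, is iterated integration by parts. An alternative is to split the integral at $y=1$, Taylor-expand $\psi$ to order $k$ at $0$, and use $\int_0^1 y^{s+j}\,dy = 1/(s+j+1)$ to isolate the poles; I will present the integration by parts version since the book-keeping is lighter.

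First, for $\Re(s) > 0$ I would integrate by parts once, noting that the boundary term at $\infty$ vanishes because $\psi$ has compact support, while the boundary term at $0$ vanishes because $y^{s+1}\psi(y) \to 0$ as $y\to 0^+$. This gives
$$F(s) = -\frac{1}{s+1}\int_0^\infty y^{s+1}\psi'(y)\,dy.$$
Iterating this identity $k$ times produces
$$F(s) = \frac{(-1)^k}{(s+1)(s+2)\cdots(s+k)}\int_0^\infty y^{s+k}\psi^{(k)}(y)\,dy,$$
initially for $\Re(s) > 0$. The right-hand integral, however, is holomorphic in the larger half-plane $\Re(s) > -k-1$, since on that region $y^{s+k}$ is locally integrable at the origin and $\psi^{(k)}$ remains compactly supported. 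Thus the identity supplies the meromorphic continuation of $F$ past $s = -k$, and exhibits the pole there as a simple one, coming from the factor $1/(s+k)$.

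To extract the residue, I would multiply by $(s+k)$, take the limit $s\to -k$, and compute
$$\int_0^\infty \psi^{(k)}(y)\,dy \;=\; \bigl[\psi^{(k-1)}(y)\bigr]_0^\infty \;=\; -\psi^{(k-1)}(0),$$
using compact support once more. Combining this with
$$(-k+1)(-k+2)\cdots(-1) = (-1)^{k-1}(k-1)!$$
gives, after the three sign factors $(-1)^k$, $(-1)^{k-1}$ and the minus from $-\psi^{(k-1)}(0)$ collapse, exactly $\psi^{(k-1)}(0)/(k-1)!$. The only nontrivial point of the argument is this sign tally at the end; there is no analytic difficulty. The special case $k=1$ drops out immediately, since then the iteration is trivial and the final expression is $\psi(0)$.
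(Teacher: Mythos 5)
Your proof is correct and uses exactly the same method the paper indicates (the paper's proof is one line: "easily obtained by the integration by parts"); you have simply spelled out the iteration, the analytic continuation, and the sign bookkeeping that the authors left implicit.
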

\begin{proof}
The above formula is easily obtained 
by the integration by parts. 
\end{proof}

For $\lambda\in\N$,
define 
\begin{eqnarray*}
&&
A_{\lambda}(\sigma):=\{j\in B(\sigma); 
l(a^j(\sigma))\lambda-\langle a^j(\sigma)\rangle \in\Z_+\},\\
&&
\rho_{\lambda}:=
\min\{\max\{\card A_{\lambda}(\sigma);\sigma\in\Sigma^{(n)}\},n-1\}.
\end{eqnarray*}

The following proposition will be used in 
the computation of the coefficients 
of the asymptotic expansion (\ref{eqn:3.1})
of $I(t;\varphi)$.

\begin{proposition}
Suppose that $f$ satisfies the condition $(C)$.
If the support of $\varphi$ is
contained in a sufficiently small neighborhood 
of the origin, then the orders of poles of 
$J_{\pm}(s)$  
at $s=-\lambda\in(-\N)$ are not 
higher than $\rho_{\lambda}+1$.
In particular, if $\lambda<1/d(f)$, 
then these orders are not higher than $1$.    
Moreover, 
let $a_{\lambda}^{\pm}$ be the coefficients of 
$(s+\lambda)^{-\rho_{\lambda}-1}$ in the Laurent expansions of 
$J_{\pm}(s)$ at $s=-\lambda$, 
respectively,  
then we have $a_{\lambda}^+=(-1)^{\lambda-1}a_{\lambda}^-$ 
for $\lambda\in\N$.
\end{proposition}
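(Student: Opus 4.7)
The plan is to start from the decomposition $J_{\pm}(s)=\sum_{\sigma\in\Sigma^{(n)}}\sum_{l}J_{\sigma,\pm}^{(l)}(s)$ and the explicit form (9.13) of its summands. After the normal-crossing change of variables furnished by Remark 8.11, the integrand of $J_{\sigma,\pm}^{(l)}(s)$ is the product of $y_p^{s}$ (with $p\notin B_l(\sigma)$), of $y_j^{l(a^j(\sigma))s+\langle a^j(\sigma)\rangle-1}$ for $j\in B_l(\sigma)$, and of $\hat{\eta}_l(y_1,\ldots,\pm y_p,\ldots,y_n)$, where the $+$ sign belongs to $J_{\sigma,+}^{(l)}$ and the $-$ sign to $J_{\sigma,-}^{(l)}$.

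Applying Lemma 9.2 variable by variable, the factor $y_p^{s}$ produces a simple pole at every integer $s=-\lambda\in-\N$, and each factor $y_j^{l(a^j(\sigma))s+\langle a^j(\sigma)\rangle-1}$ produces a simple pole at $s=-\lambda$ exactly when $l(a^j(\sigma))\lambda-\langle a^j(\sigma)\rangle\in\Z_+$, i.e.\ when $j\in A_\lambda(\sigma)$. Hence the order of the pole of $J_{\sigma,\pm}^{(l)}(s)$ at $s=-\lambda$ is at most $1+\card(A_\lambda(\sigma)\cap B_l(\sigma))$. Since $p\notin B_l(\sigma)\subsetneq\{1,\ldots,n\}$, this cardinality is also bounded by $n-1$; taking the maximum over $\sigma$ and $l$ and then the minimum with $n-1$, the order of the pole of $J_{\pm}(s)$ at $s=-\lambda$ is at most $\rho_\lambda+1$. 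For the refined statement, if $\lambda<1/d(f)=-\tilde\beta(f)$, then (9.15) forces $\lambda\, l(a^j(\sigma))<\langle a^j(\sigma)\rangle$ for every $j\in B(\sigma)$, so every $A_\lambda(\sigma)$ is empty, $\rho_\lambda=0$, and the pole order is at most one.

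For the identity $a_\lambda^{+}=(-1)^{\lambda-1}a_\lambda^{-}$, I would isolate the coefficient of $(s+\lambda)^{-\rho_\lambda-1}$ summand by summand: only pairs $(\sigma,l)$ with $\card(A_\lambda(\sigma)\cap B_l(\sigma))=\rho_\lambda$ contribute, and for such a pair Lemma 9.4, applied separately in each pole-producing variable, expresses the leading coefficient as a product in which the $y_p$-integration contributes the factor
\[
\frac{1}{(\lambda-1)!}\frac{\partial^{\lambda-1}}{\partial y_p^{\lambda-1}}\!\left[\hat{\eta}_l(y_1,\ldots,\pm y_p,\ldots,y_n)\cdot R(y)\right]_{y_p=0},
\]
where $R(y)$ collects the remaining terms, which are independent of $y_p$ and hence of the $\pm$ choice. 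The residues contributed by the variables $y_j$ with $j\in A_\lambda(\sigma)\cap B_l(\sigma)$ are the same for $J_{\sigma,+}^{(l)}$ and $J_{\sigma,-}^{(l)}$, so the entire sign discrepancy is confined to the chain-rule factor $(\pm 1)^{\lambda-1}$ produced by applying $\partial_{y_p}^{\lambda-1}$ to $\hat{\eta}_l(\ldots,\pm y_p,\ldots)$. This gives $a_\lambda^{-}=(-1)^{\lambda-1}a_\lambda^{+}$, equivalent to the claimed identity since $((-1)^{\lambda-1})^2=1$. The main obstacle I anticipate is bookkeeping rather than conceptual: making sure the orders of poles in the separate variables genuinely add up to $\rho_\lambda+1$ for the contributing $(\sigma,l)$, so that lower-order contributions do not pollute the clean $(\pm 1)^{\lambda-1}$ factorization.
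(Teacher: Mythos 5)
Your proposal is correct and follows essentially the same route as the paper's: both apply Lemmas~9.2 and 9.4 to the explicit form (9.13), bound the pole order by $1+\card(A_\lambda(\sigma)\cap B_l(\sigma))\leq\rho_\lambda+1$, and isolate the sign factor $(\pm 1)^{\lambda-1}$ produced by $\partial_{y_p}^{\lambda-1}$ acting on $\hat\eta_l(\ldots,\pm y_p,\ldots)$. The only cosmetic difference is that the paper abstracts the computation to a model integral $g_\lambda^{\pm}(s)$, while you work termwise with each $J_{\sigma,\pm}^{(l)}$; the concern you flag at the end is immaterial, since the sign factorization holds for each summand separately and any summand of lower pole order simply contributes zero to the coefficient of $(s+\lambda)^{-\rho_\lambda-1}$.
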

\begin{proof}
Let $\lambda\in\N$, 
$l_j,m_j\in\N$ for $j=1,\ldots,n-1$  
and  
$\eta\in C_0^{\infty}(\R^n)$. 
Let $B_{\lambda}$ be the subset in
$\{1,\ldots,n-1\}$ defined by
$
B_{\lambda}
=\{j;
l_j\lambda-m_j+1\in\N\},
$
and 
let $k_j\in\N$ be defined by 
$k_j=l_j\lambda-m_j+1$ for $j\in B_{\lambda}$. 
We define
\begin{equation*}
g_{\lambda}^{\pm}(s)=
\int_{\R_+^n}
\left(
y_n^s
\prod_{j\in B_{\lambda}}
y_j^
{l_j s+m_j-1}
\right)
\eta(y_1,\ldots,y_{n-1},\pm y_n)dy,
\end{equation*}
respectively. 

It easily follows from Lemma~9.2 that
the functions $g_{\lambda}^{\pm}(s)$ can be analytically extended
to $\C$ as meromorphic functions and 
they have at 
$s=-\lambda$ poles of order not higher 
than $\card B_{\lambda}+1$.
Let $b_{\lambda}^{\pm}$ be the coefficients of 
$(s+\lambda)^{-\card B_{\lambda}-1}$ in the Laurent expansions of 
$g_{\lambda}^{\pm}(s)$ at $s=-\lambda$, 
respectively. 

By carefully observing 
the analysis of $J_{\sigma,\pm}^{(l)}(s)$ 
in the proof of Theorem~9.1, 
it suffices to show the equation:
$b_{\lambda}^+=(-1)^{\lambda-1}b_{\lambda}^-$
for $\lambda\in\N$.  
By using Lemma~9.4, a direct computation gives
$
b_{\lambda}^{\pm}=(\pm 1)^{\lambda-1} C_{\lambda}
$, with
\begin{equation*}
\begin{split}
&C_{\lambda}=
\frac{1}{(\lambda-1)!}
\prod_{j\in B_{\lambda}}
\left(
\frac{1}
{l_j\prod_{\nu_j=1}^{k_j-1}
(l_j\lambda-m_j+1-\nu_j)}
\right)\\
&\times
\begin{cases}
(\partial^{\alpha-1}\eta)(0)&
\quad \mbox{if $B_{\lambda}=\{1,\ldots,n-1\}$,}\\
\displaystyle 
\int_{\R_+^{n-\card B_{\lambda}-1}}
\left(
\partial^{\alpha-1}\eta
\right)
\left(
T_{B_{\lambda}\cup\{n\}}(y)
\right)
\prod_{j\not\in B_{\lambda}\cup\{n\}}
dy_j& \quad 
\mbox{otherwise},
\end{cases}
\end{split}
\end{equation*}
where 
$\alpha=(\alpha_1,\ldots,\alpha_n)$ satisfies 
that 
$\alpha_j=k_j$ if $j\in B_{\lambda}$,
$\alpha_n=\lambda$ and 
$\alpha_j=1$ otherwise. 
From the above equation, we see that 
$b_{\lambda}^+=(-1)^{\lambda-1}b_{\lambda}^-$
for $\lambda\in\N$.
\end{proof}

\subsection{The first coefficients} 
We define the subset of important cones in 
$\Sigma^{(n)}$ as follows.
\begin{equation*}
\Sigma_*^{(n)}
:=\{\sigma\in\Sigma^{(n)};
m(f)=\card A(\sigma)\}.
\label{eqn:9.}
\end{equation*}
It follows from Lemma~9.3 that 
$\Sigma_*^{(n)}$ is nonempty. 
From the definition of $m(f)$, 
we can see the following equivalences:
\begin{eqnarray}
&&\sigma\in\Sigma_*^{(n)} \Longleftrightarrow
\dim(\tau_*)=n-\card A(\sigma)\nonumber\\
&&\quad\quad\Longleftrightarrow
\tau_*=\bigcap_{j\in A(\sigma)}
H(a^j(\sigma),l(a^j(\sigma)))\cap\Gamma_+(f).
\label{eqn:9.17}
\end{eqnarray}
Thus, when 
$\sigma\in\Sigma_*^{(n)}$,
$I=A(\sigma)$, 
$\gamma=\tau_*$,
the equation $\gamma(I,\sigma)=\gamma$ holds,  
which is an important condition in Section~8. 

Now, let us compute the coefficients of 
$(s+1/d(f))^{-m(f)}$ in the Laurent expansions of 
$\tilde{Z}_{\pm}(s;\varphi)$. 
Let 
\begin{equation}
\tilde{C}_{\pm}
(=\tilde{C}_{\pm}(f,\varphi))
:=\lim_{s\to-1/d(f)} 
(s+1/d(f))^{m(f)} 
\tilde{Z}_{\pm}(s;\varphi). 
\label{eqn:9.18}
\end{equation}
\begin{proposition}
Suppose that $f$ satisfies the condition $(C)$ and
that at least one of the following
conditions is satisfied. 
\begin{enumerate}
\item $d(f)>1$;
\item There exists a cone 
$\sigma\in\Sigma_*^{(n)}$ such that 
$f_{\sigma}\circ T_{A(\sigma)}$ 
does not vanish on $\R_+^n\cap \pi(\sigma)^{-1}(U)$.
\end{enumerate}
Here the above $f_{\tau_*}$ is considered 
as an extended smooth function defined 
on a wider region as in Lemma~5.4 $(ii)$.
Then we give explicit formulae for coefficients 
$\tilde{C}_{\pm}$: 
$(\ref{eqn:9.22}),(\ref{eqn:9.24}),(\ref{eqn:9.25}),(\ref{eqn:9.26})$
in the proof of this proposition. 
It follows from these formulae that  
if 
$\Re(\varphi(0))>0$ $($resp. $\Re(\varphi(0))<0)$ 
and 
$\Re(\varphi)$ is nonnegative 
$($resp. nonpositive$)$ on $U$,  
then 
$\Re(\tilde{C}_{\pm})$ are nonnegative 
$($resp. nonpositive$)$ and 
$\Re(\tilde{C}_{+}+\tilde{C}_{-})$ is positive
$($resp. negative$)$. 
Here $\Re(\cdot)$ expresses the real part.
\end{proposition}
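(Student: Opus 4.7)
The plan is to start from the decomposition $\tilde{Z}_{\pm}(s;\varphi)=I_{\pm}(s)+J_{\pm}(s)$ built in the proof of Theorem~9.1, localise to the cones $\sigma\in\Sigma_*^{(n)}$, identify which summands can produce a pole of order $m(f)$ at $s=-1/d(f)$, and then extract the leading Laurent coefficient by iterating Lemma~9.4 in the coordinates indexed by $A(\sigma)$. By Lemma~9.3 and the table of pole orders in Step~5 of the proof of Theorem~9.1, an $I^{(k)}_{\sigma,\pm}$ piece can produce a pole of order $m(f)$ at $-1/d(f)$ only when $\sigma\in\Sigma_*^{(n)}$; for $\sigma\notin\Sigma_*^{(n)}$ the order is strictly smaller and such summands drop out of the top coefficient.

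Next I would dispose of the $J_{\pm}$ contributions using one of the two hypotheses. Under~(i), $-1/d(f)\notin-\N$ because $d(f)>1$, so the same table bounds the order of each $J^{(l)}_{\sigma,\pm}$ at $-1/d(f)$ by $\min\{\card A(\sigma),n-1\}$; combining $\card B_l(\sigma)\leq n-1$ from Remark~8.11 with the geometric identity $A(\sigma)=I(\tau_*,\sigma)$ from the proof of Lemma~9.3 rules out attainment of the value $m(f)$ at the top order. Under~(ii), I would choose the partitions of unity $\{\chi_{\sigma}\}$ and $\{\psi_k,\eta_l\}$ attached to the good cone $\sigma_0\in\Sigma_*^{(n)}$ given by the hypothesis so that $f_{\sigma_0}$ is sign-definite on $\R_+^n\cap\mathrm{supp}(\chi_{\sigma_0})$, which eliminates the $\eta_l$-pieces attached to $\sigma_0$ altogether and leaves only lower-order $J^{(l)}$-contributions from the remaining cones.

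After these reductions, the top coefficient of $I^{(k)}_{\sigma,\pm}(s)$ at $s=-1/d(f)$ is obtained by applying Lemma~9.4 once for each $j\in A(\sigma)$ to the factor $y_j^{l(a^j(\sigma))s+\langle a^j(\sigma)\rangle -1}$: at $s=-1/d(f)$ this exponent equals $-1$ exactly for $j\in A(\sigma)$, so each such factor contributes a simple pole whose residue evaluates the remaining integrand at $y_j=0$ and divides by $l(a^j(\sigma))$. Multiplying the $m(f)$ residues and rewriting the surviving integral using Lemmas~8.7 and 8.9 yields an expression of the schematic form
$$\tilde{C}_{\pm}=\sum_{\sigma\in\Sigma_*^{(n)}}\frac{1}{\prod_{j\in A(\sigma)}l(a^j(\sigma))}\int_{T_{A(\sigma)}(\R_+^n)}\bigl((f_{\sigma}\circ T_{A(\sigma)})(y)\bigr)_{\pm}^{-1/d(f)}\Phi_{\sigma}(y)\prod_{j\notin A(\sigma)}dy_j,$$
where $\Phi_{\sigma}(y)$ is a nonnegative smooth factor built from $(\varphi\circ\pi(\sigma))\chi_{\sigma}$ together with the surviving powers $y_j^{\langle a^j(\sigma)\rangle -1}$ for $j\notin A(\sigma)$; these are the formulae (\ref{eqn:9.22}), (\ref{eqn:9.24}), (\ref{eqn:9.25}), (\ref{eqn:9.26}) claimed in the proposition.

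The sign statement then falls out of this explicit formula: the kernel $(f_{\sigma}\circ T_{A(\sigma)})_{\pm}^{-1/d(f)}\Phi_{\sigma}$ is real and nonnegative on the region of integration, so the hypothesis $\Re(\varphi(0))>0$ together with $\Re(\varphi)\geq 0$ on $U$ forces $\Re(\tilde{C}_{\pm})\geq 0$, and strict positivity of $\Re(\tilde{C}_{+}+\tilde{C}_{-})$ follows from $f_{\sigma}(0)\neq 0$ (Lemma~8.8), which forces at least one of $(f_{\sigma})_{+}$ or $(f_{\sigma})_{-}$ to be bounded below on a neighbourhood of the origin, where $\Re(\varphi)$ is also bounded below by continuity. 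I expect the main obstacle to be the combinatorial bookkeeping of the second paragraph: ruling out, under either hypothesis, a $J^{(l)}_{\sigma,\pm}$ piece that secretly raises the order of pole to $m(f)$ requires a careful joint use of the constraint $B_l(\sigma)\subsetneq\{1,\ldots,n\}$, the identification $A(\sigma)=I(\tau_*,\sigma)$, and either the strict inequality $d(f)>1$ or the sign-definiteness of $f_{\sigma_0}$ on the relevant stratum.
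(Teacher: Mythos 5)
The overall strategy — localise to cones in $\Sigma_*^{(n)}$, iterate Lemma~9.4 over $j\in A(\sigma)$, and then read off positivity — is the right shape, but there is a real gap in the way you try to dispose of the $J_\pm$ contributions under hypothesis~(i), and it cannot be repaired by sharpening the same estimate.

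Your claim is that when $d(f)>1$ the table from Step~5 of Theorem~9.1 bounds the pole order of each $J^{(l)}_{\sigma,\pm}$ at $s=-1/d(f)$ by $\min\{\card A(\sigma),n-1\}$, and that this ``rules out attainment of the value $m(f)$ at the top order.'' But for $\sigma\in\Sigma_*^{(n)}$ one has $\card A(\sigma)=m(f)$ by Lemma~9.3, so when $m(f)<n$ (which is exactly the main case the paper treats separately) this bound equals $m(f)$, not something strictly smaller. Whenever $A(\sigma)\subset B_l(\sigma)$ — a case that the paper explicitly tracks in the range of the $l$-summation in (9.19) — the piece $J^{(l)}_{\sigma,\pm}$ does produce a pole of order $m(f)$ at $-1/d(f)$. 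Your pole-order bookkeeping therefore does not eliminate these terms, and the schematic formula you write down, which accounts only for the $G$-type contributions and sums over all $\sigma\in\Sigma_*^{(n)}$, is not the leading Laurent coefficient.

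The paper handles this differently. It keeps both families $G^{(k)}_\pm(\sigma)$ and $H^{(l)}_\pm(\sigma)$ after applying Lemma~9.4 in the $A(\sigma)$-directions, uses $d(f)>1$ only to ensure that the improper integrals defining $H^{(l)}_\pm(\sigma)$ (with the factor $y_p^{-1/d(f)}$) converge, and then exploits the fact that the target coefficient $\tilde C_\pm$ does not depend on the choice of cut-off functions: shrinking the supports of the $\eta_l$ to zero volume sends every $H^{(l)}_\pm(\sigma)$ to $0$ while $\sum_k G^{(k)}_\pm(\sigma)$ converges to a computable limit, and then shrinking $\chi_\sigma$ isolates a single cone, yielding (9.22) and hence (9.24). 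So the hypothesis $d(f)>1$ is used for integrability, not for a pole-order comparison, and the elimination of the $J$-part is a deformation argument, not a pole-order argument. Under hypothesis~(ii) your idea of choosing cut-offs adapted to the sign-definite cone is closer to the paper's one-line reduction ``it suffices to deal with $G^{(k)}_\pm(\sigma)$,'' but without the deformation step you would still have to explain why $H$-contributions from \emph{other} cones in $\Sigma_*^{(n)}$ do not survive. The positivity conclusion is stated correctly, but it rests on formulae you have not actually established.
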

\begin{proof}
In this proof, 
we use the following notation and symbols 
to decrease the complexity. 
\begin{itemize}
\item $\prod_{j\not\in A(\sigma)}y_j^{a_j}dy_j$ means 
$\prod_{j\not\in A(\sigma)}y_j^{a_j}\cdot
\prod_{j\not\in A(\sigma)}dy_j$ 
with $a_j>0$; 
\item $L_{\sigma}:=\prod_{j\in A(\sigma)}l(a^j(\sigma))^{-1}$;
\item $M_j(\sigma)
:=-l(a^j(\sigma))/d(f)+\langle  a^j(\sigma)\rangle -1$.
\item 
If $a=0$, then the value of $a^{-1/d(f)}$ is defined by $0$.
\end{itemize}
Note that $M_j(\sigma)$ is a nonnegative constant and,   
moreover, $M_j(\sigma)= 0$ if and only if $j\in A(\sigma)$. 

Let us compute the limits $\tilde{C}_{\pm}$ exactly. 
We divide the computation into the following two cases:
$m(f)<n$ and $m(f)=n$. 
After obtaining the formulae 
(\ref{eqn:9.22}),(\ref{eqn:9.24}),
(\ref{eqn:9.25}),(\ref{eqn:9.26}), below,  
we can easily see that 
$\Re(\tilde{C}_{\pm})\geq 0$ and 
$\Re(\tilde{C}_{+}+\tilde{C}_{-})>0$, 
which are as in the theorem.  

\underline{The case: $m(f)<n$.}\quad
First, we consider the case that the hypothesis (i) is satisfied.
Let us explicitly compute the following limits:
\begin{equation*}
\tilde{C}_{\pm}(\sigma):=
\lim_{s\to-1/d(f)} 
(s+1/d(f))^{m(f)} 
Z^{(\sigma)}_{\pm}(s).
\end{equation*} 
Since $\tilde{C}_{\pm}(\sigma)=0$ if $\sigma\not\in\Sigma_*^{(n)}$,
it suffices to consider the case that $\sigma\in\Sigma_*^{(n)}$. 
Considering the equations (\ref{eqn:9.7}) and 
applying Lemma~9.4 to (\ref{eqn:9.10}), (\ref{eqn:9.13}) 
with respect to each $y_j$ for $j\in A(\sigma)$, 
we have
\begin{equation}
\tilde{C}_{\pm}(\sigma)=
\sum_{k} G_{\pm}^{(k)}(\sigma) +
\sum_{l} H_{\pm}^{(l)}(\sigma),
\label{eqn:9.19}
\end{equation}
with 
\begin{equation}
G_{\pm}^{(k)}(\sigma)=L_{\sigma}
\int_{\R_+^{n-m(f)}}
\frac{
\tilde{\psi}_{k}(T_{A(\sigma)}(y))
}{
f_{\sigma}(T_{A(\sigma)}(y))_{\pm}^{1/d(f)}
}
\prod_{j\not\in A(\sigma)}y_j^{M_j(\sigma)}dy_j, 
\label{eqn:9.20}
\end{equation}
\begin{equation}
H_{\pm}^{(l)}(\sigma)=
L_{\sigma}
\int_{\R_+^{n-m(f)}}
\frac{
\hat{\eta}_{l}
(T_{A(\sigma)}(y_1,\ldots,\pm y_p,\ldots,y_n))
}
{y_p^{1/d(f)}} 
\prod_{j\in B_l(\sigma)\setminus A(\sigma)}y_j^{M_j(\sigma)}dy_j,
\label{eqn:9.21}
\end{equation}
where $\tilde{\psi}_{k}$ and $\hat{\eta}_{l}$ are as 
in (\ref{eqn:9.8}), (\ref{eqn:9.13}), 
the summations in (\ref{eqn:9.19}) are taken for all $k$,$l$ 
satisfying 
$T_{A(\sigma)}(\R^n)\cap {\rm Supp}(\psi_k)\neq \emptyset$ 
and $A(\sigma)\subset B_l(\sigma)$. 
We remark that the values of 
$G_{\pm}^{(k)}(\sigma)$ and 
$H_{\pm}^{(l)}(\sigma)$ may depend on 
the cut-off functions $\chi_{\sigma}$, $\psi_k$, $\eta_l$
in Section~9.1. 
We remark that 
if $f_{\sigma}(T_{A(\sigma)}(y))<0$, then
$f_{\sigma}(T_{A(\sigma)}(y))_+^{-1/d(f)}=0$ in (\ref{eqn:9.20}). 
Since $d(f)>1$, 
the integrals in (\ref{eqn:9.21}) are convergent and they are
interpreted as improper integrals. 

In (\ref{eqn:9.20}), (\ref{eqn:9.21}), 
we deform the cut-off functions $\psi_k$ and $\eta_l$ 
as the volume of the support of $\eta_l$ tends to
zero for all $l$.  
Then, it is easy to see that 
the limit of $H_{\pm}^{(l)}(\sigma)$ is zero, while 
that of $\sum_k G_{\pm}^{(k)}(\sigma)$ can be 
computed explicitly. 
Considering the equation (\ref{eqn:9.19}), we have 
\begin{equation*}
\begin{split}
\tilde{C}_{\pm}(\sigma)=
L_{\sigma}
\int_{\R_+^{n-m(f)}}
\frac{
{\varphi}_{\sigma}(T_{A(\sigma)}(y)) 
}{
f_{\sigma}(T_{A(\sigma)}(y))_{\pm}^{1/d(f)}
}
\prod_{j\not\in A(\sigma)}y_j^{M_j(\sigma)} dy_j,
\end{split}
\end{equation*}
where $\varphi_{\sigma}$ is as in (\ref{eqn:9.6}).

Now, 
let us compute the limits $\tilde{C}_{\pm}$ explicitly. 
If the cut-off function $\chi_{\sigma}$ is deformed as 
the volume of the support of $\chi_{\sigma}$ tends to zero, 
then $\tilde{C}_{\pm}(\sigma)$ tends to zero. 
Notice that each $\R^n(\sigma)$ (see Section~7.3)
is densely embedded in $Y_{\Sigma}$ and 
that
$\tilde{C}_{\pm}=
\sum_{\sigma\in\Sigma_*^{(n)}}\tilde{C}_{\pm}(\sigma)$.
Thus,  
for an arbitrary fixed cone $\sigma\in\Sigma_*^{(n)}$, 
we have 
\begin{equation}
\begin{split}
\tilde{C}_{\pm}=
L_{\sigma}
\int_{\R_+^{n-m(f)}}
\frac{
({\varphi}\circ\pi(\sigma))(T_{A(\sigma)}(y)) 
}{
f_{\sigma}(T_{A(\sigma)}(y))_{\pm}^{1/d(f)}
}
\prod_{j\not\in A(\sigma)} 
y_j^{M_j(\sigma)}dy_j.
\end{split}
\label{eqn:9.22}
\end{equation}

Let us give the other formulae of $\tilde{C}_{\pm}$. 
From the condition (\ref{eqn:9.17}), Lemma~8.9 implies 
\begin{equation}
(f_{\tau_*}\circ\pi(\sigma))(T_{A(\sigma)}^1(y))
=
\left(
\prod_{j\not\in A(\sigma)}
y_j^{l(a^j(\sigma))}
\right)
f_{\sigma}(T_{A(\sigma)}(y)).
\label{eqn:9.23}
\end{equation}
By using the above equation, (\ref{eqn:9.22}) 
can be rewritten as 
\begin{equation}
\begin{split}
\tilde{C}_{\pm}=L_{\sigma}
\int_{\R_+^{n-m(f)}}
\frac{
(\varphi\circ\pi(\sigma))(T_{A(\sigma)}(y))
}{
(f_{\tau_*}\circ\pi(\sigma))(T_{A(\sigma)}^1(y))_{\pm}^{1/d(f)}
}
\prod_{j\not\in A(\sigma)} y_j^{\langle  a^j(\sigma) \rangle -1}
dy_j.
\end{split}
\label{eqn:9.24}
\end{equation}

Secondly, we consider the case that the hypothesis (ii) is satisfied. 
In this case, it suffices to deal with the case of  
$G_{\pm}^{(k)}(\sigma)$ only.
Therefore, the limits $\tilde{C}_{\pm}$
can be samely computed as in (\ref{eqn:9.22}) and (\ref{eqn:9.24}),
where $\sigma$ is as in the hypothesis (ii). 
We remark that $C_+$ or $C_-$ is equal to zero in this case. 

\underline{The case: $m(f)=n$.}\quad
In this case, we see that 
$A(\sigma)=\{1,\ldots,n\}$, 
$m(f)=n$ and 
the principal face $\tau_*$ is the point 
$q_*=(d(f),\ldots,d(f))$.
Similar computations give 
the following. 
The first expression, corresponding to (\ref{eqn:9.22}),
is 
\begin{equation}
\tilde{C}_{\pm}=
L_{\sigma}\frac{\varphi(0)}{f_{\sigma}(0)^{1/d(f)}_{\pm}}.
\label{eqn:9.25}
\end{equation}
The second expression, corresponding to (\ref{eqn:9.24}), 
is 
\begin{equation}
\begin{split}
\tilde{C}_{\pm}=
L_{\sigma}
\frac{
\varphi(0)}{
f_{\tau_*}(1,\ldots,1)^{1/d(f)}_{\pm}
}
=L_{\sigma}(d(f)!)^{n/d(f)}\frac{
\varphi(0)}{
(\partial^{q_*}f)(0)_{\pm}^{1/d(f)}
}.
\end{split}
\label{eqn:9.26}
\end{equation}
\end{proof}
\begin{remark}
From the proof of Proposition~9.6, 
we see the following. 
\begin{enumerate}
\item 
The values of 
(\ref{eqn:9.22}),(\ref{eqn:9.24}),(\ref{eqn:9.25}),(\ref{eqn:9.26})
are independent of the chosen cone $\sigma\in\Sigma_*^{(n)}$.
\item 
The integrals in (\ref{eqn:9.22}),(\ref{eqn:9.24})
are convergent. 
\end{enumerate}
\end{remark}
\begin{remark}
Let us consider the case that $\tau_*$ is compact. 
Then 
$\pi(\sigma)\circ T_{A(\sigma)}(\R^n)=0$ from Lemma~8.6. 
More simple formulae 
are obtained as follows.  
\begin{equation}
\begin{split}
\tilde{C}_{\pm}=L_{\sigma}\varphi(0)
\int_{\R_+^{n-m(f)}}
\frac{1}
{(f_{\tau_*}\circ\pi(\sigma))(T_{A(\sigma)}^1(y))_{\pm}^{1/d(f)}}
\prod_{j\not\in A(\sigma)} y_j^{\langle  a^j(\sigma) \rangle -1}
dy_j.
\end{split}
\label{eqn:9.27}
\end{equation}
\end{remark}

\begin{remark}
In \cite{dls97},\cite{dns05}, 
similar formulae of 
$\tilde{C}_{\pm}$  
are obtained in the real analytic phase case. 
Their results do not require the assumptions (i), (ii) 
in Proposition~9.6. 
\end{remark}
Next, let us compute the coefficients of 
$(s+1/d(f))^{-m(f)}$ in the Laurent expansions of 
$Z_{\pm}(s;\varphi)$, $Z(s;\varphi)$. 
Let 
\begin{equation*}
\begin{split}
&C_{\pm}=\lim_{s\to-1/d(f)} 
(s+1/d(f))^{m(f)} 
Z_{\pm}(s;\varphi), \\
&C=\lim_{s\to-1/d(f)} 
(s+1/d(f))^{m(f)} 
Z(s;\varphi),
\end{split}
\end{equation*}
respectively.
\begin{theorem}
Suppose that $f$ satisfies the condition $(C)$ and 
that at least one of the following
conditions is satisfied. 
\begin{enumerate}
\item $d(f)>1$;
\item $f$ is nonnegative or nonpositive on $U$;
\item $f_{\tau_*}$ 
does not vanish on $U\cap(\R\setminus\{0\})^n$.
\end{enumerate}
If 
$\Re(\varphi(0))>0$ 
$($resp. $\Re(\varphi(0))<0)$ 
and 
$\Re(\varphi)$ is nonnegative 
$($resp. nonpositive$)$ on $U$ 
and the support of $\varphi$ is
contained in a sufficiently small neighborhood 
of the origin, 
then we have 
\begin{equation}
C_{\pm}=\sum_{\theta\in\{-1,1\}^n}
\tilde{C}_{\pm}(f_{\theta},\varphi_{\theta})
\mbox{\,\,\, and \,\,\,} C=C_++C_-,
\label{eqn:9.28}
\end{equation}
where 
$f_{\theta}(x):=f(\theta x)$,
$\varphi_{\theta}(x):=\varphi(\theta x)$
and
$\tilde{C}_{\pm}(f,\varphi)$ 
$($defined as in $(\ref{eqn:9.18}))$
are as in  
$(\ref{eqn:9.22}),(\ref{eqn:9.24}),(\ref{eqn:9.25}),(\ref{eqn:9.26})$.
It follows from these formulae that   
$\Re(C_{\pm})$ are nonnegative and 
$\Re(C)=\Re(C_{+}+C_{-})$ is positive. 
\end{theorem}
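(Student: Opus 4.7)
The plan is to reduce Theorem~9.10 to Proposition~9.6 via the orthant decomposition (\ref{eqn:9.3}), which writes $Z_{\pm}(s;\varphi)=\sum_{\theta\in\{-1,1\}^n}\tilde{Z}_{\pm}(s;f_{\theta},\varphi_{\theta})$, where $\tilde{Z}_{\pm}(s;f_{\theta},\varphi_{\theta})=\int_{\R_+^n}f_{\theta}(x)_{\pm}^{s}\varphi_{\theta}(x)\,dx$ is the integral (\ref{eqn:9.4}) with phase $f_\theta$ and amplitude $\varphi_\theta$. Multiplying by $(s+1/d(f))^{m(f)}$ and letting $s\to -1/d(f)$ yields $C_{\pm}=\sum_{\theta}\tilde{C}_{\pm}(f_{\theta},\varphi_{\theta})$, provided Proposition~9.6 applies to each $(f_{\theta},\varphi_{\theta})$. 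Since $d(\cdot)$ and $m(\cdot)$ depend only on the Newton polyhedron, we have $d(f_{\theta})=d(f)$ and $m(f_{\theta})=m(f)$.

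First I would verify that each $f_{\theta}$ satisfies condition (C) on $U_{\theta}:=\{x\in\R^n;\theta x\in U\}$. The Taylor coefficients of $f_{\theta}$ at the origin are $\theta^{\alpha}c_{\alpha}$, so $\Gamma_+(f_{\theta})=\Gamma_+(f)$, and substituting $\theta x$ in the defining limit (\ref{eqn:2.2}) gives $(f_{\theta})_{\gamma}(x)=f_{\gamma}(\theta x)$; hence $f_{\theta}\in\hat{\mathcal E}(U_{\theta})$. The chain rule $\nabla(f_{\theta})_{\gamma}(x)=\theta\cdot\nabla f_{\gamma}(\theta x)$ together with $\theta_j=\pm 1$ shows that nondegeneracy of $f$ transfers to $f_{\theta}$.

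Next I would verify, case by case, that one of hypotheses (i), (ii) of Proposition~9.6 holds for each $(f_{\theta},\varphi_{\theta})$. Under hypothesis (a) of Theorem~9.10, $d(f_{\theta})=d(f)>1$ is immediate. Under hypothesis (b), say $f\ge 0$ on $U$, we have $f_{\theta}\ge 0$ on $U_{\theta}$, so $(f_{\theta})_{-}\equiv 0$ and $\tilde{C}_{-}(f_{\theta},\varphi_{\theta})=0$; for $\tilde{C}_{+}$, nonnegativity together with the nondegeneracy of $(f_{\theta})_{\tau_*}$ force $(f_{\theta})_{\tau_*}>0$ on $U_{\theta}\cap(\R\setminus\{0\})^n$ (a zero there would be a local minimum, contradicting nondegeneracy), and then identity (\ref{eqn:9.23}) combined with the positivity of the monomial map $\pi(\sigma)$ on the interior of $\R_+^n$ delivers the nonvanishing of $(f_{\theta})_{\sigma}\circ T_{A(\sigma)}$ required by Proposition~9.6 (ii). Under hypothesis (c), $(f_{\theta})_{\tau_*}$ does not vanish on $U_{\theta}\cap(\R\setminus\{0\})^n$ for either sign, and identity (\ref{eqn:9.23}) again transfers this to the required nonvanishing.

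Finally, the positivity statement follows from the corresponding statement in Proposition~9.6: since $\varphi_{\theta}(0)=\varphi(0)$ has positive (resp.\ negative) real part and $\Re(\varphi_{\theta})$ is nonnegative (resp.\ nonpositive) on $U_{\theta}$, each $\tilde{C}_{\pm}(f_{\theta},\varphi_{\theta})$ has nonnegative (resp.\ nonpositive) real part and $\tilde{C}_{+}(f_{\theta},\varphi_{\theta})+\tilde{C}_{-}(f_{\theta},\varphi_{\theta})$ has positive (resp.\ negative) real part; summing over $\theta$ preserves these signs. The main obstacle is the case analysis of the third step, particularly hypothesis (b): one must use nondegeneracy together with the sign condition to upgrade nonnegativity of $(f_{\theta})_{\tau_*}$ to strict positivity off the coordinate planes, and then carefully invoke the extended $C^{\infty}$ definition of the $\tau_*$-part from Lemma~5.4 (ii) to evaluate the right-hand side of (\ref{eqn:9.23}) on the appropriate part of $\R_+^n$.
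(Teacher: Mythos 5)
Your framework matches the paper's: the orthant decomposition (\ref{eqn:9.3}) reduces the claim to Proposition~9.6 for each $(f_\theta,\varphi_\theta)$, and your preliminary check that $f_\theta$ inherits condition~(C) is correct, if routine. The gaps are in the case analysis, and they sit exactly where the real content of the paper's proof lies. First, in case (b) you appeal to ``the nondegeneracy of $(f_\theta)_{\tau_*}$'' to rule out zeros of the nonnegative function $f_{\tau_*}$ on the open orthant; but the nondegeneracy condition~(\ref{eqn:3.2}) constrains $\nabla f_\gamma$ only for \emph{compact} faces $\gamma$, and $\tau_*$ need not be compact (this is precisely why the paper invokes the extended $\tau_*$-part of Lemma~5.4~(ii)). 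As an illustration, $f(x)=x_1^2(1+x_2)^2+x_1^4$ is nonnegative, belongs to $\hat{\mathcal E}(U)$, and is nondegenerate in the sense of~(\ref{eqn:3.2}) (the only compact face of $\Gamma_+(f)$ is the vertex $(2,0)$), yet its principal face $\tau_*$ is the noncompact facet on $\alpha_1=2$ and $f_{\tau_*}(x)=x_1^2(1+x_2)^2$ has a degenerate zero at every $(x_1,-1)$ with $x_1\neq 0$; so ``a zero would be a critical point, contradicting nondegeneracy'' is simply not available.

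Second, the transfer from $f_{\tau_*}>0$ on the open orthant to the nonvanishing of $f_\sigma\circ T_{A(\sigma)}$ on \emph{all} of $\R_+^n\cap\pi(\sigma)^{-1}(U)$ demanded by Proposition~9.6~(ii) cannot be accomplished by~(\ref{eqn:9.23}) alone. On the part of the boundary of $\R_+^n$ where some $y_j=0$ with $j\notin A(\sigma)$ and $l(a^j(\sigma))>0$, the prefactor on the right of~(\ref{eqn:9.23}) vanishes and the identity degenerates to $0=0$; by continuity you only obtain $f_\sigma\circ T_{A(\sigma)}\geq 0$ there, not strict positivity. This upgrade is exactly where the paper's proof invokes the resolution result: after establishing nonnegativity of $f_\sigma$ (via~(\ref{eqn:8.7}) in case (b), or of $f_\sigma\circ T_{A(\sigma)}$ via~(\ref{eqn:8.9}) in case (c)), it argues that a zero of $f_\sigma$ at a boundary point $b_0\in T_I^*(\R^n)$ with $\gamma(I,\sigma)$ compact would, by Theorem~8.10 (which is where nondegeneracy actually enters), force $f_\sigma$ to take negative values nearby, a contradiction. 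That argument is the missing piece; neither the interior computation nor Lemma~5.4~(ii) substitutes for it.
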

\begin{proof}
From the relationship (\ref{eqn:9.3}), 
it suffices to show 
that the above conditions (ii) and (iii)
imply the condition (ii) in Proposition~9.6, 
when the support of $\varphi$ is contained in 
a sufficiently small neighborhood 
of the origin.

(ii)\quad 
Let us assume that $f$ is nonnegative on $U$. 
(Needless to say, the nonpositive case is similarly 
proved.)
Let $\sigma$ be in $\Sigma_*^{(n)}$. 
From the equation (\ref{eqn:8.7}),
$l(a^j(\sigma))$ are even
for all $j$ and $f_{\sigma}$ is nonnegative
on $\pi(\sigma)^{-1}(U)$. 
Let us assume that there exists a point 
$b_0\in T_I(\R^n)\cap \pi(\sigma)^{-1}(U)$ 
with nonempty $I\subset \{1,\ldots,n\}$ 
such that $f_{\sigma}(b_0)=0$. 
Since $f$ is nondegenerate with respect to
$\Gamma_+(f)$, 
Theorem~8.10 implies that there is a point 
$b \in \pi(\sigma)^{-1}(U)$ 
close to $b_0$ 
such that $f_{\sigma}(b)<0$.
This contradicts the nonnegativity 
of $f$ on $U$, 
so we see that  
there exists an open neighborhood $V$ such that 
$\{
y\in\pi(\sigma)^{-1}(V);f_{\sigma}(y)=0
\}\subset (\R\setminus\{0\})^n$
and $\Re(\varphi)$ is nonnegative on $V$.
By replacing $U$ by $V$, 
the condition (ii) in the above theorem 
implies the condition (ii) in Proposition~9.6.

(iii)\quad 
We only consider the case that
$f_{\tau_*}$ is positive on $U\cap (\R\setminus\{0\})^n$.
It follows from the equation (\ref{eqn:8.9}) that
$f_{\sigma}\circ T_{A(\sigma)}$ is 
nonnegative on $\pi(\sigma)^{-1}(U)$. 
By the same argument as in the above case (ii), 
the nondegeneracy condition implies
that $f_{\sigma}\circ T_{A(\sigma)}$ is
positive on $\pi(\sigma)^{-1}(U)$ with a sufficiently
small neighborhood $U$.
\end{proof}

\section{Proofs of the theorems in Section~3}

\subsection{Relationship between $I(t;\varphi)$ and 
$Z_{\pm}(s;\varphi)$}

It is known 
(see \cite{igu78}, \cite{agv88}, etc.) 
that 
the study of the asymptotic behavior of the 
oscillatory integral $I(t;\varphi)$ in (\ref{eqn:1.1})
can be reduced to an investigation of the poles
of the functions $Z_{\pm}(s;\varphi)$ in (\ref{eqn:9.1}). 
Here, we overview this situation. 
Let $f$,$\varphi$ satisfy 
the conditions (A),(B) in Section~3. 
Suppose that the support of $\varphi$ is sufficiently small. 

Define the Gelfand-Leray function  
$K:\R \to \R$ as
\begin{equation}
K(u)=\int_{W_u} \varphi(x)  \omega, 
\label{eqn:10.3}
\end{equation}
where $W_u=\{x\in\R^n; f(x)=u\}$ and $\omega$ is 
the surface element on $W_u$ which is determined by 
$
df\wedge \omega=dx_1\wedge\cdots\wedge dx_n.
$
By using $K(u)$, 
$I(t;\varphi)$ and $Z_{\pm}(s;\varphi)$ can be expressed 
as follows.
Changing the integral variables 
in (\ref{eqn:1.1}),(\ref{eqn:9.1}), we have 
\begin{equation}
\begin{split}
&I(t;\varphi)=\int_{-\infty}^{\infty}
e^{itu}K(u)du 
=\int_{0}^{\infty}
e^{i\tau t}K(u)du+
\int_{0}^{\infty}
e^{-itu }K(-u)du,
\end{split}
\label{eqn:10.2}
\end{equation}
\begin{equation}
Z_{\pm}(s;\varphi)
=\int_0^{\infty} u^s K(\pm u)du,
\label{eqn:10.3}
\end{equation}
respectively. 
Applying the inverse formula of the Mellin transform
to (\ref{eqn:10.3}),  
we have 
\begin{equation}
K(\pm u)
=\frac{1}{2\pi i}
\int_{c-i\infty}^{c+i\infty} 
Z_{\pm}(s;\varphi)u^{-s-1}ds,
\label{eqn:10.4}
\end{equation}
where $c>0$ and the integral contour follows
the line Re$(s)=c$ upwards. 
Let us consider the case that 
$Z_{\pm}(s;\varphi)$ 
are meromorphic functions on $\C$ and 
their poles exist on the negative part of the real axis. 
By deforming the integral contour as $c$ tends 
to $-\infty$ in (\ref{eqn:10.4}), 
the residue formula gives the
asymptotic expansions of $K(u)$ 
as $u\to\pm 0$.
Substituting these expansions of $K(u)$ into 
(\ref{eqn:10.2}), 
we can get an asymptotic expansion of $I(t;\varphi)$ 
as $t\to+\infty$.

Through the above calculation, 
we see a specific relationship for the coefficients. 
If $Z_{\pm}(s;\varphi)$  
have the Laurent expansions at $s=-\lambda$:
\begin{equation*}
Z_{\pm}(s;\varphi)=\frac{B_{\pm}}{(s+\lambda)^{\rho}}+
O\left(\frac{1}{(s+\lambda)^{\rho-1}}\right),
\end{equation*} 
then the corresponding part in the asymptotic
expansion of $I(t;\varphi)$ has the form
$$
B\tau^{-\lambda}(\log t)^{\rho-1}+
O(\tau^{-\lambda}(\log t)^{\rho-2}).
$$ 
Here a simple computation gives 
the following relationship:
\begin{equation}
B=\frac{\Gamma(\lambda)}{(\rho-1)!}
\left[
e^{i\pi \lambda/2}B_+ +e^{-i\pi \lambda/2}B_-
\right],
\label{eqn:10.5}
\end{equation}
where $\Gamma$ is the Gamma function. 
\subsection{Proofs of Theorems 3.3, 3.5 and 3.7}
Applying the above argument to the results 
relating to $Z_{\pm}(s;\varphi)$ in Section~9, 
we obtain the theorems in Section~3. 

{\it Proof of Theorems~3.3 and 3.5.}\quad
These theorems are shown by using Theorem~9.1 
with Proposition~9.5. 
Notice that Proposition~9.5 and 
the relationship (\ref{eqn:10.5}) 
induce the cancellation of the coefficients 
of the terms,  
whose orders are larger than $-1/d(f)$. 

{\it Proof of Theorem~3.7.}\quad 
This theorem follows from Theorem~9.10. 
Notice that the relationship (\ref{eqn:10.5}) 
gives the information about the
coefficient of the first term of $I(t;\varphi)$. 
\subsection{The first coefficient in the asymptotics (\ref{eqn:3.1})}

From the relationship (\ref{eqn:10.5}) and 
the equations (\ref{eqn:9.28}),
we give explicit formulae for the coefficient of 
the leading term of the asymptotic expansion in (\ref{eqn:3.1})  
as follows. 
\begin{theorem}
If $f$ satisfies the same conditions in Theorem~3.7 and 
the support of $\varphi$ is contained in a
sufficiently small neighborhood of the origin, 
then we have
\begin{equation*}
\begin{split}
&\lim_{t\to\infty}
t^{1/d(f)}(\log t)^{-m(f)+1}\cdot I(t;\varphi)\\
&\quad\quad=
\frac{\Gamma(1/d(f))}{(m(f)-1)!}
[
e^{i\pi/(2d(f))}C_+ +
e^{-i\pi/(2d(f))}C_-
],
\end{split}
\label{eqn:}
\end{equation*}
where $C_{\pm}$ are as in $(\ref{eqn:9.28})$.
\end{theorem}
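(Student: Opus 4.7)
The plan is to apply the Mellin-transform framework outlined in Section~10.1, feeding in the precise pole data for $Z_{\pm}(s;\varphi)$ at $s=-1/d(f)$ that is provided by Theorem~9.10. Concretely, by Theorem~3.3 the oscillatory integral $I(t;\varphi)$ admits an asymptotic expansion of the form (\ref{eqn:3.1}) as $t\to +\infty$, with $\alpha$ belonging to the set (3.4); by Theorem~3.5 the largest such $\alpha$ is $-1/d(f)$, and by Theorem~3.7 the corresponding multiplicity is exactly $m(f)$. Hence $I(t;\varphi)$ has a leading term of the shape $B\, t^{-1/d(f)}(\log t)^{m(f)-1}$, and the problem reduces to identifying $B$.

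First I would recall the representation $I(t;\varphi)=\int_0^\infty e^{it u}K(u)\,du+\int_0^\infty e^{-itu}K(-u)\,du$ and the Mellin inversion formula (\ref{eqn:10.4}), which expresses $K(\pm u)$ in terms of $Z_{\pm}(s;\varphi)$. Pushing the contour in (\ref{eqn:10.4}) from $\Re(s)=c>0$ leftward past $s=-1/d(f)$ and computing the residue produces asymptotic expansions of $K(\pm u)$ as $u\to\pm0$; by Theorem~9.1 all other poles lie to the left of $-1/d(f)$, so the term picked up at $s=-1/d(f)$ is genuinely the leading one. Inserting these expansions of $K(\pm u)$ into (\ref{eqn:10.2}) and integrating termwise (which amounts to evaluating $\int_0^\infty e^{\pm itu}u^{\lambda-1}(\log u)^{\rho-1}\,du$ and isolating the top power of $\log t$) yields precisely the relationship (\ref{eqn:10.5}) between the leading Laurent coefficients of $Z_{\pm}(s;\varphi)$ and the coefficient $B$ in the asymptotic expansion of $I(t;\varphi)$.

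Next I would specialize (\ref{eqn:10.5}) to $\lambda=1/d(f)$ and $\rho=m(f)$. The definition of $C_{\pm}$ just before Theorem~9.10 identifies them as exactly the coefficients of $(s+1/d(f))^{-m(f)}$ in the Laurent expansions of $Z_{\pm}(s;\varphi)$, so substituting $B_{\pm}=C_{\pm}$ into (\ref{eqn:10.5}) gives the claimed identity
\[
B=\frac{\Gamma(1/d(f))}{(m(f)-1)!}\bigl[e^{i\pi/(2d(f))}C_++e^{-i\pi/(2d(f))}C_-\bigr].
\]
Under the hypotheses of Theorem~3.7, Theorem~9.10 further asserts that these $C_{\pm}$ are nonzero in the required sense (their real parts sum to a positive or negative quantity according to the sign of $\Re(\varphi(0))$), which guarantees $B\neq 0$ and thus that $t^{1/d(f)}(\log t)^{-m(f)+1}\cdot I(t;\varphi)$ really converges to $B$ rather than to $0$.

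The main obstacle is the contour-shift/residue justification: one must verify uniform decay estimates for $Z_{\pm}(s;\varphi)$ on vertical lines so that the contour deformation in (\ref{eqn:10.4}) is legitimate and the residue at $s=-1/d(f)$ produces a genuine asymptotic (rather than formal) contribution to $K(\pm u)$. This is standard once the pole structure of Theorem~9.1 is in hand, and can be handled by the usual Mellin-transform bookkeeping (for instance via repeated integration by parts in the defining integrals of $Z_{\pm}(s;\varphi)$ to gain decay in $|\Im s|$); the remaining steps are purely algebraic manipulations of $\Gamma$-integrals.
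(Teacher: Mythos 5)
Your proposal matches the paper's own (very terse) argument in Section~10.3: the paper derives Theorem~10.1 simply by substituting $\lambda=1/d(f)$, $\rho=m(f)$, and $B_\pm=C_\pm$ into the identity (\ref{eqn:10.5}), with $C_\pm$ identified via Theorem~9.10 as the coefficients of $(s+1/d(f))^{-m(f)}$ in the Laurent expansions of $Z_\pm(s;\varphi)$. Your reconstruction of the Mellin-transform/contour-shift framework from Section~10.1, and your appeal to Theorems~3.3, 3.5, 3.7 for the shape of the leading term, is exactly what the paper intends.

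One small imprecision worth flagging: you claim that ``by Theorem~9.1 all other poles lie to the left of $-1/d(f)$,'' but the pole set (\ref{eqn:9.5}) also contains $(-\N)$, and when $d(f)<1$ (which can occur for $n\geq 3$) there are negative integers strictly to the right of $-1/d(f)$. These contribute nontrivially to the asymptotics of $K(\pm u)$, and the reason they do not appear in the expansion of $I(t;\varphi)$ is the cancellation established in Proposition~9.5 together with (\ref{eqn:10.5}): the symmetry $a_\lambda^+=(-1)^{\lambda-1}a_\lambda^-$ forces $e^{i\pi\lambda/2}a_\lambda^++e^{-i\pi\lambda/2}a_\lambda^-=0$ for $\lambda\in\N$. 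Since you already derive the leading-term shape from Theorems~3.3, 3.5, 3.7 (whose proofs incorporate this cancellation), your conclusion is unaffected, but the cited reason is not the right one in the case $d(f)<1$.
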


\section{Examples}
In this section, we consider 
the oscillatory integrals (\ref{eqn:1.1})
with specific phases $f$, which satisfy
the condition (A) in Section~3 and
have noncompact principal faces. 
Moreover, in the first three examples, 
the phases belong to the class $\hat{\mathcal E}(U)$
and satisfy the nondegeneracy condition in 
Section~3.
These examples cannot be directly dealt with by earlier
investigations.
Note that, for each phase, the origin is not a critical point of 
finite multiplicity in Tougeron's theorem (see Remark~2.4).
The last example shows that  
Theorem~3.1 (iii) cannot be directly generalized 
to the smooth case.  
In this section, 
we assume that the amplitudes $\varphi$ satisfy the condition (B)
in Section~3. 

\subsection{Example 1}
Consider the following two-dimensional example:
\begin{equation}
f(x_1,x_2)
=x_1^8+x_1^{7} x_2^{1}+ x_1^{6} x_2^{2}(1+e^{-1/x_2^2}).
\end{equation}
It is easy to determine important quantities and 
functions as follows. 
Let $\sigma$ be a cone whose skeleton
is $a^1$, $a^2$, where $a^1=(1,0)$, $a^2=(1,1)$.
\begin{itemize}
\item $d(f)=6$ and $m(f)=1$,
\item $\tau_*=\{(6,\alpha_2);\alpha_2\geq 2\}$,
$\Sigma^{(2)}_*=\{\sigma\}$ and $A(\sigma)=\{1\}$,
\item $l(a^1)=6$, $l(a^2)=8$, 
\item $\pi(\sigma)(y_1,y_2)=(y_1y_2,y_2)$,
\item $f_{\tau_*}(x)=x_1^6 x_2^{2}(1+e^{-1/x_2^2})$ and  
$(f_{\tau_*}\circ\pi(\sigma))(y)=
y_1^{6} y_2^{8}(1+e^{-1/y_2^2})$,
\item $f_{\sigma}(y)=y_1^2+y_1+1+e^{-1/y_2^2}$
\end{itemize}
Substituting the above into (\ref{eqn:9.22}) or (\ref{eqn:9.24}),
we have 
\begin{equation*}
\tilde{C}_+(f,\varphi)=\frac{1}{6}
\int_{0}^{\infty}
\frac{\varphi(0,y)}{
y^{1/3}(1+e^{-1/y^2})^{1/6}
}dy 
\end{equation*}
and $\tilde{C}_-(f,\varphi)=0$.
Moreover, we have
\begin{equation*}
\lim_{t\to\infty}
t^{1/6}\cdot I(t;\varphi)
=
\frac{e^{\pi i/12}}{3}
\int_{-\infty}^{\infty}
\frac{\varphi(0,y)}
{|y|^{1/3}(1+e^{-1/y^2})^{1/6}
}dy. 
\end{equation*}
Note that strong results, relating to this example,
have been obtained 
in \cite{im11tams},\cite{im11jfaa} in the case where
the phase is smooth and 
the principal face is compact, and
in \cite{gre09} 
in the case where 
the principal face is noncompact but
the phase needs the real analyticity. 

\subsection{Example 2}
Consider the following three-dimensional example:
\begin{equation}
f(x_1,x_2,x_3)
=
x_1^6+ 
x_1^{4}x_2^2 e^{-1/x_3^2} +
x_1^{2}x_2^4 e^{-1/x_3^4} +
x_2^{6}.
\end{equation}
It is easy to determine important quantities and 
functions as follows. 
Let $\sigma$ be a cone whose skeleton
is $a^1$, $a^2$, $a^3$, 
where $a^1=(1,0,0)$, $a^2=(1,1,0)$, $a^3=(0,0,1)$.
\begin{itemize}
\item $d(f)=3$ and $m(f)=1$,
\item $\tau_*=\{\alpha\in\R_+^3;\alpha_1+\alpha_2=6\}$,
$\sigma\in\Sigma^{(3)}_*$ and $A(\sigma)=\{2\}$,
\item $l(a^1)=l(a^3)=0$ and $l(a^2)=6$, 
\item $\pi(\sigma)(y_1,y_2,y_3)=(y_1y_2,y_2,y_3)$,
\item $f_{\tau_*}(x)=f(x)$ and
 $(f_{\tau_*}\circ\pi(\sigma))(y)=
y_2^{6}(y_1^6+y_1^4 e^{-1/y_3^2} +
y_1^{2} e^{-1/y_3^4} +1)$,
\item $f_{\sigma}(y)=y_1^6+y_1^4 e^{-1/y_3^2} +
y_1^{2} e^{-1/y_3^4} +1$.
\end{itemize}
Substituting the above into (\ref{eqn:9.22}) or (\ref{eqn:9.24}),
we have 
\begin{equation*}
\tilde{C}_+(f,\varphi)
=\frac{1}{6}
\int_{\R_+^2}
\frac{
\varphi(y_1,0,y_3)
}{
(y_1^6+y_1^4 e^{-1/y_3^2} +
y_1^{2} e^{-1/y_3^4} +1)^{1/3}
}dy_1 dy_3
\end{equation*} 
and 
$\tilde{C}_-(f,\varphi)=0$.  Moreover, 
we have
\begin{equation*}
\lim_{t\to\infty}
t^{1/3}\cdot I(t;\varphi)=
\Gamma(4/3)e^{\pi i/6}
\int_{\R^2}
\frac{
\varphi(y_1,0,y_3)}{
(y_1^6+y_1^4 e^{-1/y_3^2} +
y_1^{2} e^{-1/y_3^4} +1)^{1/3}
}dy_1 dy_3.
\end{equation*}

\subsection{Example 3}
In the case of 
the following three-dimensional example,
the logarithmic factor appears 
in the leading term of asymptotics of
$I(t;\varphi)$:
\begin{equation}
f(x_1,x_2,x_3)
=
x_1^6+ 
x_1^{2}x_2^2(1+e^{-1/x_3^2})+ 
x_2^{6}.
\end{equation}
It is easy to determine important quantities and 
functions as follows. 
Let $\sigma$ be a cone whose skeleton
is $a^1$, $a^2$, $a^3$, 
where $a^1=(2,1,0)$, $a^2=(1,1,0)$, $a^3=(0,0,1)$.
\begin{itemize}
\item $d(f)=2$ and $m(f)=2$,
\item $\tau_*=\{(2,2,\alpha_3);\alpha_3\geq 0\}$,
$\sigma\in\Sigma^{(3)}_*$ and $A(\sigma)=\{1,2\}$,
\item $l(a^1)=6$, $l(a^2)=4$, $l(a^3)=0$, 
\item $\pi(\sigma)(y_1,y_2,y_3)=(y_1^2y_2,y_1y_2,y_3)$,
\item $f_{\tau_*}(x)=x_1^2x_2^2(1+e^{-1/x_3^2})$ and
 $(f_{\tau_*}\circ\pi(\sigma))(y)=
y_1^{6}y_2^4(1+e^{-1/y_3^2})$,
\item $f_{\sigma}(y)=y_1^6 y_2^2+y_2^2+1+e^{-1/y_3^2}$. 
\end{itemize}
Substituting the above into (\ref{eqn:9.22}) or (\ref{eqn:9.24}),
we have 
\begin{equation}
\tilde{C}_+(f,\varphi)
=\frac{1}{24}
\int_{0}^{\infty}
\frac{
\varphi(0,0,y)}{
(1+e^{-1/y^2})^{1/2} 
}dy
\end{equation}
and $\tilde{C}_-(f,\varphi)=0$. Moreover, we have
\begin{equation*}
\lim_{t\to\infty}
t^{1/2}(\log t)^{-1}\cdot I(t;\varphi)=
\frac{\sqrt{\pi}e^{i\pi/4}}{6}
\int_{-\infty}^{\infty}
\frac{
\varphi(0,0,y)
}{
(1+e^{-1/y^2})^{1/2}}dy.
\end{equation*}
\subsection{Example 4}
Consider the following two-dimensional example
given by 
Iosevich and Sawyer in \cite{is97}:
\begin{equation}
f(x_1,x_2)
=
x_1^2+ 
e^{-1/|x_2|^{\alpha}}, \quad\quad \alpha>0.
\end{equation}
Note that the above $f$ 
satisfies the nondegeneracy condition as in Section~3 but
it does not belong to $\hat{\mathcal E}(U)$. 
It is easy to see the following:
\begin{itemize}
\item $d(f)=2$ and $m(f)=1$,
\item $\tau_*=\{\a\in\R_+^2;\alpha_1=2\}$,
\item $f_{\tau_*}(x_1,x_2)=x_1^2$.
\end{itemize}
Consider an amplitude of the form:   
$\varphi(x_1,x_2)=\psi_1(x_1)\psi_2(x_2)$ where
$\psi_j$ are smooth nonnegative functions on $\R$ 
satisfying $\psi_j(0)>0$ and 
its support is small for $j=1,2$.
In \cite{is97}, 
Iosevich and Sawyer shows: 
$$
|I(t;\varphi)|\leq Ct^{-1/2}(\log t)^{-1/\alpha} 
\,\, \mbox{ for $t\geq 2$.}
$$
In particular, we have $\lim_{t\to\infty} t^{1/2}I(t;\varphi)=0$, 
which is different phenomenon from 
that in Remark~3.2. 
This example shows that 
the assertion (iii) in Theorem~3.1 with Remark~3.2 
cannot be directly generalized to the smooth case. 
Moreover, 
the pattern of the asymptotic expansion in this case 
might be different from 
that of (\ref{eqn:3.1}). 


\vspace{1 em}

{\sc Acknowledgements.}\quad 
The authors would like to express their sincere gratitude 
to Hiroyuki Ochiai for his careful reading of the manuscript
and giving the authors many valuable comments.




\end{document}